\renewcommand{\geq}{\geqslant}
\newcommand{\bbZ}{\mathbb{Z}}
\newcommand{\bbQ}{\mathbb{Q}}
\newcommand{\bbR}{\mathbb{R}}
\newcommand{\bbC}{\mathbb{C}}
\newcommand{\bbD}{\mathbb{D}}
\newcommand{\Dc}{\mathcal{D}}
\newcommand{\Mc}{\mathcal{M}}
\newcommand{\Rc}{\mathcal{R}}
\newcommand{\cwt}{{\bf Cut Wire Theorem}}
\theoremstyle{plain}
\newtheorem{main-theorem}{Theorem}
\newtheorem{theo}{Theorem}[section]
\newtheorem{prop}[theo]{Proposition}
\newtheorem{lemm}[theo]{Lemma}
\theoremstyle{definition}
\newtheorem{defi}[main-theorem]{Definition}
\newtheorem{remark}[main-theorem]{Remark}
\newtheorem{exam}[theo]{Example}
\newtheorem{rema}[theo]{Remark}
\newtheorem*{clai-nn}{Claim}
\newtheorem*{theorem*}{Theorem}
\newtheorem*{schoencond*}{Sch\"onflies Condition}
\author[1]{Beno\^it Loridant\thanks{This author was supported by the Agence Nationale de la Recherche (ANR), the Japanese Society for the Promotion of Science (JSPS) and the Austrian Science Fund
(FWF) through the projects ANR-FWF I1136 and JSPS-FWF I3346.}}
\author[2]{Jun Luo\thanks{Supported by the Chinese National Natural Science Foundation Projects 11771391 and 11871483.}}
\author[3]{Yi Yang}
\affil[1]{  Montanuniversit\"at Leoben,
    Franz Josefstrasse 18, Leoben 8700, Austria
}
\affil[2]{School of Mathematics,
    Sun Yat-Sen University, Guangzhou 510275, China
}
\affil[3]{School of Mathematical Sciences, Peking University, Beijing 100871, China}
\affil[3]{Corresponding author: yangyi@math.pku.edu.cn or yangyi\_1989@163.com}
\title{\Large A Core Decomposition of Compact Sets in the Plane}
\date{\today}
\begin{document}
\maketitle

\begin{abstract}

A Peano continuum means a locally connected continuum. A compact metric space is called a  \emph{Peano compactum} if all its components are Peano continua and if for any constant $C>0$ all but finitely many of its components are of diameter less than $C$.  Given a compact set $K\subset\mathbb{C}$, there usually exist several upper semi-continuous decompositions of $K$ into subcontinua such that the quotient space, equipped with the quotient topology, is a Peano compactum. We prove that one of these decompositions is finer than all the others and call it the
\emph{core decomposition of $K$ with Peano quotient}. This core decomposition gives rise to a metrizable quotient space, called the Peano model of $K$, which is shown to be determined by the topology of $K$ and hence independent of the embedding of $K$ into $\bbC$. We also construct a concrete continuum $K\subset\mathbb{R}^3$ such that the core decomposition of $K$ with Peano quotient does not exist. For specific choices of $K\subset\bbC$, the above mentioned core decomposition coincides with two models obtained recently, namely the locally connected model for unshielded planar continua (like connected Julia sets of polynomials) and the finitely Suslinian model for unshielded planar compact sets (like polynomial Julia sets that may not be connected). The study of such a core decomposition provides partial answers to several questions posed by Curry in 2010. These questions are motivated by other works, including those by Curry and his coauthors, that aim at understanding the dynamics of a rational map $f: \hat{\bbC}\rightarrow\hat{\bbC}$ restricted to its Julia set.

\textbf{Keywords.} \emph{Peano compactum, locally connected, finitely Suslinian, core decomposition.}
\end{abstract}

%\tableofcontents

%%% article text:
%%%%%%%%%%%%%%%%%%%%%%%%%%%%%%%%%%%%%%%%%%%%%%%%%
%%%%%%%%%%%%%%%%%%%%%%%%%%%%%%%%%%%%%%%%%%%%%%%%%
%%%%%%%%%%%%%%%%%%%%%%%%%%%%%%%%%%%%%%%%%%%%%%%%%

\newpage

%\tableofcontents

\section{Introduction and main results}

In this paper, a compact metric space is called a \emph{compactum} and a connected compactum is called a \emph{continuum}. If $K,L$ are two compacta, a continuous onto map $\pi:K\to L$ such that the preimage of every point in $L$ is connected is called \emph{monotone} \cite{Whyburn79}.
We are interested in compacta in the complex plane $\bbC$ or in the Riemann sphere $\hat{\bbC}$.

Given a compactum $K\subset\mathbb{C}$, an \emph{upper semi-continuous decomposition} $\mathcal{D}$ of $K$ is a partition of $K$ such that for every open set $B\subset K$ the union of all $d\in\mathcal{D}$  with $d\subset B$ is open in $K$ (see~\cite{Kuratowski68}). Let $\pi$ be the natural projection sending $x\in K$ to the unique element of $\mathcal{D}$ that contains $x$. Then a set $A\subset \mathcal{D}$ is said to be open in $\mathcal{D}$ if and only if  $\pi^{-1}(A)$ is open in $K$. This defines the \emph{quotient topology} on $\mathcal{D}$. If all the elements of such a decomposition $\Dc$ are compact the equivalence on $K$ corresponding to $\Dc$ is  a closed subset of $K\times K$ and the quotient topology on $\Dc$ is metrizable \cite[p.148, Theorem 20]{Kelley55}. Equipped with an appropriate metric compatible with the quotient topology, the quotient space $\Dc$ is again a compactum.

An upper semi-continuous decomposition of a compactum $K\subset\mathbb{C}$ is \emph{monotone} if each of its elements is a subcontinuum of $K$.  In this case, the mapping $\pi$ described above is a monotone map. Let $\mathcal{D}$ and $\mathcal{D}'$ be two monotone
decompositions of a compactum  $K\subset\mathbb{C}$, with projections $\pi$ and $\pi'$, and suppose that $\mathcal{D}$ and $\mathcal{D}'$ both satisfy a  topological property $(T)$.  We say that $\mathcal{D}$ is \emph{finer than} $\mathcal{D}'$ \emph{with respect to} $(T)$ if  there is a map $g:\mathcal{D}\to\mathcal{D}'$ such that $\pi'=g\circ \pi$. If a monotone
decomposition of a compactum $K$ is finer than every other monotone decomposition of $K$ with respect to $(T)$, then it is called the \emph{core decomposition} of $K$ with respect to $(T)$ and it will be denoted by $\mathcal{D}_K^T$ or simply $\mathcal{D}_K$, if $(T)$ is fixed. Clearly, the core decomposition $\mathcal{D}_K^T$ is unique, if it exists.

Recently, core decompositions with respect to two specific topological properties were studied for the special class of compacta $K\subset\mathbb{C}$ that are {\em unshielded}, that is,  $K=\partial W$ for the unbounded component $W$ of  $\mathbb{C}\setminus K$. More generally, when dealing with subsets of the Riemann sphere, a compactum $K\subset\hat{\mathbb{C}}$ is called unshielded if $K=\partial W$ for some component $W$ of $\hat{\mathbb{C}}\setminus K$. In particular, if a rational function $R:\hat{\mathbb{C}}\rightarrow\hat{\mathbb{C}}$ has a completely invariant Fatou component, then its Julia set is an unshielded compactum (see~\cite[Theorem 5.2.1.(i)]{Beardon91}). It is known that every polynomial Julia set is unshielded, while a rational Julia set may not be. % unshielded.

In 2011, Blokh-Curry-Oversteegen obtained the core decomposition $\mathcal{D}_K^{LC}$ of an arbitrary unshielded continuum $K$ with respect to the property of local connectedness \cite[Theorem 1]{BCO11}. A special case is when $K$ is the connected Julia set of a polynomial. In our paper, we will solve the existence of $\mathcal{D}_K^{LC}$ for all continua $K\subset\bbC$, without assuming that $K$ is unshielded. In particular,  our  core decomposition will apply to the study of connected Julia sets of rational functions on the extended complex plane $\hat{\bbC}$. Such a result is very helpful in searching for an answer to \cite[Question 5.2]{Curry10}. However, if we only consider upper semi-continuous decompositions then there might be two decompositions $\mathcal{D}_1,\mathcal{D}_2$ of an unshielded continuum $K\subset\mathbb{C}$ which are both Peano continua under quotient topology, such that the only decomposition finer than $\mathcal{D}_1$ and $\mathcal{D}_2$ is the decomposition $\{ \{z\}: z\in K\}$ into singletons. Actually, let $\mathcal{C}\subset[0,1]\subset\mathbb{C}$ be Cantor's ternary set.
Let $K$ be the union of $\{x+{\mathbf i}y: x\in \mathcal{C}, y\in[0,1]\}$ with $\{x+{\mathbf i}: x\in [0,1]\}$ and call it the {\em Cantor Comb}.  See the following figure for an approximation of $K$.
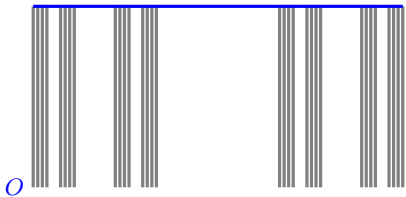
\begin{figure}[ht]
\begin{center}
\begin{tikzpicture}[scale=0.2]
\footnotesize
 \pgfmathsetmacro{\xone}{0}
 \pgfmathsetmacro{\xtwo}{24.3}
 \pgfmathsetmacro{\yone}{0}
 \pgfmathsetmacro{\ytwo}{12}

% ticks
\foreach \x in {0,5.4,16.2,21.6}
  \foreach \p in {0,1.8}
   \foreach \q in {0,.3,.6,.9}
  \draw[gray, very thick] (\x+\p+\q,0) -- (\x+\p+\q,\ytwo);

% the segment on the top
\draw[blue,very thick] (0,\ytwo) -- (\xtwo,\ytwo);
% origin
 \draw[blue] (0,0) node[anchor=east] {$O$};
\end{tikzpicture}
\end{center}\vskip -0.75cm
\caption{An approximation of the Cantor Comb.}
\end{figure}
Let $p_1$ be the restriction  to $K$ of the projection $x+{\mathbf i}y\mapsto x$. Then $\mathcal{D}_1=\left\{p_1^{-1}(x): x\in[0,1]\right\}$ coincides with $\mathcal{D}_K^{LC}$, the core decomposition of $K$ with respect to local connectedness. Let $\mathcal{D}_2$ be the union of all the translates $C_y:=\{x+{\mathbf i}y: x\in\mathcal{C}\}$ of $K$ with $0\le y\le 1$ and all the single point sets $\{z=x+{\mathbf i}\}$ with $x\notin\mathcal{C}$. Then $\mathcal{D}_2$ is an upper semi-continuous decomposition of $K$, which is not monotone and which is a Hawaiian earing under quotient topology. Clearly, the only decomposition finer than both $\mathcal{D}_1$ and $\mathcal{D}_2$ is the decomposition $\{ \{x\}: x\in K\}$ into singletons.

In 2013, Blokh-Curry-Oversteegen obtained for any unshielded compactum $K\subset\mathbb{C}$ the core decomposition $\mathcal{D}_{K}^{FS}$ with respect to the property of being finitely Suslinian \cite[Theorem 4]{BCO13}. Here a compactum is \emph{finitely Suslinian} provided that every collection of pairwise disjoint subcontinua whose diameters are bounded away from zero is finite. Since every finitely Suslinian continuum is locally connected,
we see that \cite[Theorem 1]{BCO11} is a special case of \cite[Theorem 4]{BCO13}.
We may wonder about the existence of the core decomposition $\mathcal{D}_{K}^{FS}$ of an arbitrary compactum  $K\subset\mathbb{C}$. However, there are examples of continua $K\subset\mathbb{C}$ failing to have such a core decomposition (see~\cite[Example 14]{BCO13} and Section~\ref{sec:lclamcore} of this paper). We will replace the property of being finitely Suslinian by the property of being a {\it Peano compactum}. This class of compacta will be defined below. For every compactum $K\subset \mathbb{C}$, we will prove the existence of the core decomposition $\mathcal{D}_K^{PC}$ with respect to the property of being a Peano compactum. We will briefly call $\mathcal{D}_K^{PC}$ the {\it core decomposition of $K$ with Peano quotient}. Since a finitely Suslinian compactum is also a Peano compactum, the decomposition $\mathcal{D}_K^{PC}$ is finer than $\mathcal{D}_K^{FS}$ for any compactum  $K\subset\mathbb{C}$, when the latter exists. For unshielded compacta  $K\subset\mathbb{C}$, we will prove in Section~\ref{sec:lclamcore} that our core decomposition $\mathcal{D}_K^{PC}$  coincides with the core decomposition $\mathcal{D}_K^{FS}$, given by~\cite[Theorem 4]{BCO13}.

Clearly, the core decompositions $\mathcal{D}_K^{PC}$ and $\mathcal{D}_K^{LC}$ for a continuum $K\subset\mathbb{C}$ must coincide with each other.  However,  for a general continuum or compactum $K$ that can not be embedded into the plane, the core decomposition $\Dc_K^{PC}$ may not exist. We refer to Example \ref{no-CD-ex} for an example
of continuum in $\bbR^3$ which has no core decomposition $D_K^{PC}$.

Our work is motivated by recent studies in and possible applications to the field of complex dynamics, but we will rather focus on the topological part.  The  concept of Peano compactum has its origin in an ancient result of Sch\"onflies. It also has motivations from some recent works by Blokh, Oversteegen and their colleagues. We will show that this property can be used advantageously in discussing core decompositions, besides the properties of being locally connected or finitely Suslinian. Sch\"onflies' result reads as follows.

\begin{theorem*}\cite[p.515, $\S61$, II, Theorem 10]{Kuratowski68}. If  $K$ is a locally connected compactum in the plane and if the sequence $R_1,R_2,\ldots$ of components of $\bbC\setminus K$ is infinite, then the sequence of their diameters converges to zero.
\end{theorem*}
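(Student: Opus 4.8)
The plan is to argue by contradiction: suppose $K$ is a locally connected compactum in $\bbC$ whose complement has infinitely many components $R_1, R_2, \ldots$, but the diameters do not converge to zero. Then after passing to a subsequence we may assume $\diam(R_n) \geq \varepsilon$ for some fixed $\varepsilon > 0$ and all $n$. The goal is to contradict the local connectedness of $K$ by producing a point at which $K$ fails to be connected im kleinen. First I would pick in each $R_n$ two points $a_n, b_n$ with $\mathrm{dist}(a_n, b_n) \geq \varepsilon/2$, and extract convergent subsequences $a_n \to a$, $b_n \to b$; since each $R_n$ is open and the $R_n$ are pairwise disjoint, the limit points $a, b$ must lie in $K$ (a limit of points from infinitely many distinct complementary components cannot sit in any single open $R_m$, nor in the unbounded component once we are far enough along, so it lies in $\partial(\bbC\setminus K) \subset K$), and $\mathrm{dist}(a,b) \geq \varepsilon/2 > 0$.

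Next I would use the standard fact that a locally connected compactum is \emph{uniformly} locally connected: for every $\eta > 0$ there is $\delta > 0$ such that any two points of $K$ within distance $\delta$ lie in a connected subset of $K$ of diameter less than $\eta$. Apply this with $\eta = \varepsilon/4$ to get the corresponding $\delta$. The idea is that for large $n$ the component $R_n$ has small ``mouth'': because $\diam(R_n) \to 0$ is being denied, I instead look at the \emph{boundary} behaviour. Here is the cleaner route. Since $\bbC$ is locally connected and $K$ is closed, each complementary component $R_n$ is open and arcwise connected; pick an arc, or rather use that the \emph{frontier} $\partial R_n$ is contained in $K$. The key geometric input is a Zoretti-type theorem: around each $R_n$ one can find a simple closed curve in $\bbC \setminus K$ (or in $K$'s complement together with a thin collar) separating $R_n$ from $\infty$ of diameter close to $\diam(\overline{R_n})$. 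However, the slickest argument avoids this machinery and instead runs as follows.

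Fix the $\delta$ from uniform local connectedness applied to $\eta = \varepsilon/8$. Cover $K$ by finitely many open sets $U_1, \ldots, U_m$ of diameter less than $\delta$. For each large $n$, the set $R_n$ has diameter $\geq \varepsilon$, so $\partial R_n$ has diameter $\geq \varepsilon$ as well (an open bounded connected set and its boundary have the same diameter, since $\overline{R_n} = R_n \cup \partial R_n$ and $R_n$ being open cannot contain a ``diameter-realizing pair'' in its interior alone — more carefully, $\diam \partial R_n = \diam \overline{R_n} = \diam R_n$). Thus $\partial R_n$, which lies in $K$, is a connected-ish boundary of large diameter meeting at least two of the sets $U_i$; by pigeonhole some unordered pair $\{U_i, U_j\}$ is met by $\partial R_n$ for infinitely many $n$. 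Choose $x_n \in \partial R_n \cap U_i$ and $y_n \in \partial R_n \cap U_j$, with subsequential limits $x \in \overline{U_i} \cap K$, $y \in \overline{U_j} \cap K$, and $\mathrm{dist}(x, y) \geq \varepsilon - 2\delta > 0$ if we chose the cover fine enough. Now, for $n$ large, $x_n$ and $x$ are within $\delta$, so they lie in a connected subset $A_n \subset K$ of diameter $< \varepsilon/8$; the component of $K$ containing $x$ therefore also contains $x_n \in \partial R_n$. Running the same argument for another complementary component $R_{n'}$ (with $n' \neq n$ both large) whose boundary also meets $U_i$, I get that the component of $K$ through $x$ meets $\partial R_n$, $\partial R_{n'}$, \ldots for infinitely many indices, while a symmetric argument at $y$ forces the \emph{same} component (it contains arcs joining $x_n$ to $y_n$ within $K$, using uniform local connectedness along a chain) to have diameter $\geq \mathrm{dist}(x,y) > 0$ bounded below; more importantly, this component separates $\bbC$ into infinitely many pieces locally near $x$, contradicting that $K$ is connected im kleinen at $x$ — a point of local connectedness of $K$ has a neighborhood base of connected sets, but any connected neighborhood of $x$ in $K$ of diameter $< \varepsilon/2$ must avoid all but finitely many of the $R_n$'s, whereas it contains $x_n \in \partial R_n$ for infinitely many $n$, and $\partial R_n \subset \overline{R_n}$ forces the neighborhood to come arbitrarily close to infinitely many disjoint open sets of diameter $\geq \varepsilon$ — and two disjoint open sets of diameter $\geq \varepsilon$ lying within $\varepsilon/2$ of a common point $x$ is impossible for all but finitely many, since only finitely many pairwise disjoint open sets can have diameter $\geq \varepsilon$ and meet a ball of radius $\varepsilon/2$. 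The last sentence is the crux and needs to be stated carefully.

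I expect the main obstacle to be exactly that final packing/contradiction step: turning ``$x$ has arbitrarily small connected neighborhoods in $K$'' into a genuine contradiction with ``infinitely many disjoint large-diameter complementary components accumulate at $x$.'' The honest way to close it is: a connected neighborhood $V \subset K$ of $x$ with $\diam V < \varepsilon/4$ contains $x_n \in \partial R_n$ for infinitely many $n$; but $\partial R_n \subset \overline{R_n}$ and $R_n$ open means $V$ is disjoint from $R_n$, so $R_n$ is a complementary component of $K$ whose closure meets the small set $V$; since $\diam R_n \geq \varepsilon > 4\diam V$, the set $R_n$ pokes out of any ball containing $V$, hence $R_n \setminus \overline{B(x, \varepsilon/2)} \neq \emptyset$ while $R_n \cap V \neq \emptyset$... and the pairwise-disjointness plus a volume/area or simply a diameter-counting argument (at most finitely many pairwise disjoint sets of diameter $\geq \varepsilon$ can intersect the compact set $\overline{B(x,\varepsilon/4)}$) yields the contradiction. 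I would present this counting lemma as a short separate claim. The rest — uniform local connectedness of a locally connected compactum, and $\diam \overline{R_n} = \diam R_n$ — are standard and will be cited or dispatched in one line.
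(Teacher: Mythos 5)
Your preparatory steps are fine: discarding the single unbounded component, extracting $a_n,b_n\in R_n$ with limits in $K$ (pairwise disjoint open sets are mutually separated, so $\overline{R_n}\cap R_m=\emptyset$ for $n\ne m$ and a limit of points from infinitely many distinct components cannot lie in any $R_m$), the identity $\diam \partial R_n=\diam\overline{R_n}=\diam R_n$ for bounded components, and uniform local connectedness of a locally connected compactum are all correct and standard. But the proof breaks exactly at the step you yourself single out as the crux. The counting lemma you want -- \emph{at most finitely many pairwise disjoint sets of diameter $\geq\varepsilon$ can meet a fixed compact set} -- is false: the rectangles $(0,1)\times(2^{-n-1},2^{-n})$ are pairwise disjoint, each of diameter $>1$, and all of them meet every disk $\overline{B(0,\delta)}$; there is no area obstruction either, because disjoint open connected sets of large diameter can be arbitrarily thin. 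Worse, the whole configuration you have assembled at the moment you invoke that lemma is not contradictory, so no careful restatement can save it: take $K=[0,1]^2\setminus\bigcup_n S_n$, where the $S_n=(1/4,3/4)\times(a_n,b_n)$ are pairwise disjoint open slits with $b_n<a_{n+1}$ and $b_n\uparrow 1/2$. Each $S_n$ is a component of $\bbC\setminus K$ of diameter $\geq 1/2$, and at the point $p=(1/4,1/2)$ the set $K$ \emph{is} locally connected: for small $r$ the set $B(p,r)\cap K$ is a connected relatively open neighborhood of $p$, and it contains points of $\partial S_n$ (the left edges $\{1/4\}\times[a_n,b_n]$) for infinitely many $n$. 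Thus ``a small connected neighborhood $V\ni x$ in $K$ meeting $\partial R_n$ for infinitely many disjoint components $R_n$ with $\diam R_n\geq\varepsilon$'' is a perfectly realizable situation; any argument that, like yours, only uses local connectedness at the accumulation point $x$ plus a metric packing statement cannot possibly conclude.

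What is missing is the genuinely planar, more global ingredient: one must play the two accumulation points $x$ and $y$ (with $|x-y|$ bounded below) against each other, e.g.\ by choosing two parallel lines separating them so that infinitely many $R_n$ cross the strip between the lines, and then converting the infinitely many disjoint complementary ``channels'' into infinitely many disjoint pieces of $K$ crossing the strip via a cut-wire/separation argument -- precisely the content of Lemma~\ref{separation} and Lemma~\ref{key-connection} of this paper -- before contradicting local connectedness of $K$ at suitable points (in the slit example, $K$ fails to be locally connected at interior points of the limit segment, not at $p$). You gesture at a ``symmetric argument at $y$,'' but the contradiction you actually write down uses only the local picture at $x$, so the gap is real. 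Note also that the paper does not prove this statement; it cites Kuratowski, so there is no in-paper proof to compare with, but the proof of Theorem~\ref{LC-criterion} shows the strip/cut-wire technique a self-contained argument would need.
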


The above theorem gives a necessary condition for planar compacta to be locally connected. This condition is also necessary for planar compacta to be  finitely Suslinian, as we will prove in Theorem~\ref{FS-necessary}. However, in both cases, the condition is not sufficient. For instance, Sierpinski's universal curve is not finitely Suslinian but its complement has infinitely many components whose diameters converge to zero. Also, the closed topologist's sine curve is not locally connected but its complement has a single component. This motivates us to introduce the following condition, which happens to be necessarily fulfilled  by every planar compactum $K$, if $K$ is assumed to be finitely Suslinian or locally connected.

\begin{schoencond*} A compactum $K$ in the plane fulfills the \emph{Sch\"onflies condition} if for the unbounded strip $U=U(L_1,L_2)$ bounded by any two parallel lines $L_1$ and $L_2$, the {\bf difference} $\overline{U}\setminus K$ has at most finitely many components intersecting both $L_1$ and $L_2$.
\end{schoencond*}

\begin{main-theorem}\label{necessary}
If a compactum $K$ in the plane is locally connected or finitely Suslinian then it satisfies the Sch\"onflies condition.
\end{main-theorem}

We will obtain an equivalent formulation of the Sch\"onflies condition in Lemma~\ref{separation}, that is: {\em a compactum $K\subset\bbC$ satisfies the Sch\"onflies condition if and only if for any strip $U=U(L_1,L_2)$ bounded by  two parallel lines $L_1$ and $L_2$, the {\bf intersection} $\overline{U}\cap K$ has at most finitely many components that intersect both $L_1$ and $L_2$}. Moreover, we will show that the above Sch\"onflies condition entirely characterizes the local connectedness for continua in the plane. A very similar characterization is obtained in \cite[Theorem 1.1]{Luo07}, which uses open annuli bounded by two concentric circles on a planar continuum.

\begin{main-theorem}\label{LC-criterion}
A continuum $K$ in the plane is locally connected if and only if it satisfies the Sch\"onflies condition.
\end{main-theorem}

Continua like Sierpinski's universal curve indicate that the above theorem does not hold if we replace {\em locally connected} with {\em finitely Suslinian}.

The main purpose of this paper is not to characterize the finitely Suslinian compacta, but to find appropriate candidates for the core decomposition, instead of the finitely Suslinian property. Such a core decomposition of planar compacta will have interesting applications to the study of Julia sets of rational functions. See for instance the open questions proposed by Curry at the end of \cite{Curry10}. Combined with earlier models developed by Blokh-Curry-Oversteegen \cite{BCO11,BCO13},  the results of Theorem~\ref{necessary} and Theorem~\ref{LC-criterion} provide some evidence that planar compacta satisfying the Sch\"onflies condition seem to be a reasonable model for the above mentioned core decompositions. Therefore, we give a nontrivial characterization of the Sch\"onflies condition as follows.

\begin{main-theorem}\label{sufficient}
A compactum $K\subset\bbC$ satisfies the Sch\"onflies condition if and only if it has the following two properties.
\begin{itemize}
\item[(1)] Every component of $K$ is a Peano continuum.
\item[(2)] For every  $C>0$, all but finitely many   components of $K$ are of diameter less than $C$.
\end{itemize}
\end{main-theorem}

A compact metric space satisfying the above two properties in Theorem~\ref{sufficient} will be called a \emph{Peano compactum}.
%\footnote{In the first edition of this paper, we adopted another name -- \emph{Peano space} -- for this newly introduced notion. Since Peano space was used in topological literature to mean "Peano continuum"(see \cite[p.199]{Hall-Spencer-1955} or \cite[p.117]{Hocking-Young-1961}), we now changed it to the new term "Peano compactum".}
In particular, a Peano continuum is a connected Peano compactum. After the above preparations, we are in a good position to introduce our strategy to prove the existence of $\Dc_K^{PC}$, the core decomposition with Peano quotient, for every compactum $K$ in the plane.

Let us define a relation $R_K$ on any given compactum $K\subset\bbC$ as follows, together with the smallest closed equivalence containing $R_K$. To this end, for two disjoint simple closed curves $J_1$ and $J_2$ we denote by $U(J_1,J_2)$ the component of $\hat{\bbC}\setminus(J_1\cup J_2)$ whose boundary equals $J_1\cup J_2$. This is an annulus  in the extended complex plane $\hat{\bbC}$.

\begin{defi}\label{model}
Let $K\subset\mathbb{C}$ be a compactum. We say that two points $x,y\in K$ are {\em related under $R_K$} provided that either $x=y$ or there exist two disjoint simple closed curves $J_1\ni x$ and $J_2\ni y$ such that $\overline{U(J_1,J_2)}\cap K$ contains an infinite sequence of components $Q_n$ intersecting both $J_1$ and $J_2$, whose limit $\lim\limits_{n\rightarrow\infty}Q_n$ under Hausdorff distance contains $\{x,y\}$. Moreover, let $\mathscr{R}_K$ be the collection of all the closed equivalence relations on $K$ containing $R_K$ (as subsets of $K\times K$). The intersection of all the equivalence relations in $\mathscr{R}_K$, denoted as $\sim$, is the smallest closed equivalence containing $R_K$ hence is  also an element of $\mathscr{R}_K$. We call $R_K$ the \emph{Sch\"onflies relation on $K$} and $\sim$ the \emph{Sch\"onflies equivalence on $K$}.
\end{defi}

One can check that the equivalence class $[x]$ under the Sch\"onflies equivalence $\sim$ is a continuum for every $x\in K$ (see Proposition \ref{connected-class}). Moreover, if $K$ in Theorem~\ref{sufficient} is assumed to be unshielded, then  it is finitely Suslinian if and only if it satisfies the Sch\"onflies condition (see Theorem~\ref{FS}). Also, if the compactum $K$ in Definition~\ref{model} is unshielded, then $\sim$ coincides with the relation developed in~\cite{BCO13}, given by the finest finitely Suslinian model (see Section~\ref{sec:lclamcore}).

From now on, we denote by $\mathcal{D}_K$ the collection of equivalence classes $[x]:=\{z\in K: z\sim x\}$, for $x$ running through $K$. Thus $\mathcal{D}_K$ is the decomposition induced by the Sch\"onflies equivalence. It is standard to verify that the hyperspace $\mathcal{D}_K$ is necessarily a compact, Hausdorff and secondly countable space under quotient topology \cite[p.148, Theorem 20]{Kelley55}. Therefore, it is metrizable by Urysohn's metrization theorem \cite[p.125, Theorem 16]{Kelley55} and we may consider it as a compactum. We will prove that it satisfies the two properties of Theorem~\ref{sufficient}, thus is a  Peano compactum.

\begin{main-theorem}\label{quotient}
Under quotient topology $\mathcal{D}_K$  is a  Peano compactum.
\end{main-theorem}

After this we will show that $\mathcal{D}_K$ is finest in the following sense.

\begin{main-theorem}\label{finest}
Let $\sim$ be the Sch\"onflies equivalence  on a compactum $K\subset\bbC$ and $\pi(x)=[x]$ the  natural projection from $K$ to $\mathcal{D}_K$. If $f: K\rightarrow Y$ is monotone map onto a  Peano compactum $Y$, then there is an onto map $g: \mathcal{D}_K\to Y$ with $f=g\circ \pi$.
\end{main-theorem}

By the above Theorems~\ref{quotient} and~\ref{finest}, we can conclude that  the core decomposition $\mathcal{D}_K^{PC}$ with Peano quotient exists and coincides with $\mathcal{D}_K$.

\begin{main-theorem}\label{mainaim}
Every compactum $K\subset\mathbb{C}$ has a core decomposition $\mathcal{D}_K^{PC}$ with respect to the property of being a Peano compactum. It coincides with the decomposition $\mathcal{D}_K$ induced by the Sch\"onflies equivalence $\sim$ on $K$.
\end{main-theorem}

Finally, we want to emphasize that the core decomposition with Peano quotient may not exist for a general compactum $K$ (see Example \ref{no-CD-ex}). However, if it exists then it is unique and is determined by the topology of $K$ (see Theorem \ref{topological-CD}). Combining this with the result of Theorem \ref{mainaim}, we see that for any planar compactum $K$ the core decomposition $\Dc_K^{PC}$ always exists and is independent of the embedding of $K$ into the plane. We will also refer to the quotient space $\Dc_K^{PC}$  as the {\em Peano model of $K$}.

\begin{remark}\label{rmk:RelationCurry} Theorem~\ref{mainaim} is very helpful when one tries to  answer \cite[Questions 5.2 and 5.4]{Curry10}. Indeed, in the first part of \cite[Question 5.4]{Curry10}, Curry asks: for what useful topological properties (P) does there exist a finest decomposition of every Julia set $J(R)$ (of a rational function $R$) satisfying (P)? Our Theorem \ref{mainaim} indicates that the property of ``being a Peano compactum'' is such a property.  Moreover, in the last part of \cite[Question 5.4]{Curry10}, Curry asks:  which of these (properties) is the appropriate analogue for the finest locally connected model? Since the core decomposition $\mathcal{D}_K^{PC}$ in Theorem \ref{mainaim} generalizes the earlier finest decompositions obtained in~\cite{BCO11,BCO13}, the property of ``being a Peano compactum'' is a reasonable candidate. What remains is to verify that the decomposition $\Dc_K^{PC}$ is ``dynamic'', in the sense that the rational function $R$ sends every element of $\Dc_K^{PC}$ into an element of $\Dc_K^{PC}$. This is what Curry asks, in \cite[Question 5.2]{Curry10} and in the middle part of \cite[Question 5.4]{Curry10}. We expect further work on the decomposition $\Dc_K$ in this direction,  especially towards applications to the dynamical study of a rational function restricted to its Julia set (see for instance \cite{Yang17}).
% especially those that are related to and have applications to the study on dynamics of a rational function restricted to its Julia set

%In a forthcoming paper, the above questions proposed by Curry will be discussed fully. Those discussions will show that the  core decomposition obtained in Theorem \ref{mainaim} has natural applications to the study of complex dynamics, as what has been done in earlier works by Curry and his colleagues in studying dynamics of a polynomial restricted to its Julia set.
\end{remark}

\begin{remark}\label{rmk:degen}
For any compactum $K$, the decomposition $\{Q: Q\ \text{is\ a\ component\ of}\ K\}$ always induces a Peano quotient, whose components are single points. Therefore, an important problem is to determine whether the core decomposition $\mathcal{D}_K^{PC}$  of  a given compactum $K\subset\mathbb{C}$ induces a quotient space having a non-degenerate component. In such a case, we say that $\mathcal{D}_K^{PC}$ is a {\it non-degenerate} core decomposition; otherwise, we say that it is a {\it degenerate} core decomposition. Clearly, the core decomposition of every indecomposable continuum $K\subset\mathbb{C}$ is degenerate, since $\mathcal{D}_K^{PC}=\{K\}$. The studies of Blokh-Curry-Oversteegen~\cite{BCO13,BCO11}, whose models are generalized by the core decomposition introduced in this paper, already provide very interesting results on  the existence of non-degenerate core decompositions. For instance, by~\cite[Theorem 27]{BCO11}, if a continuum $X\subset K$ has a ``well-slicing family'', then the image of $X$ under the natural projection $\pi: K\rightarrow\mathcal{D}_K^{PC}$ is a non-degenerate continuum, hence $K$ has a non-degenerate core decomposition.  If $K$ is the Julia set of a polynomial, then it is stated in~\cite[Corollary 24]{BCO13} that $K$ has a  non-degenerate core decomposition $\mathcal{D}_K^{PC}$ if and only if $K$ has a periodic component $Q$ which, as a plane continuum, has a non-degenerate core decomposition $\mathcal{D}_Q^{PC}$. In other words, to compute the core decomposition $\mathcal{D}_K^{PC}$ we just need to compute the core decomposition $\mathcal{D}_Q^{PC}$ for all the periodic components $Q$ of $K$. If the above Julia set $K$ is connected and is ``finitely irreducible'', the result of \cite[Theorem 4.1]{Curry10} indicates that the core decomposition $\mathcal{D}_K^{PC}$ satisfies either $\mathcal{D}_K^{PC}=\{K\}$ or $\mathcal{D}_K^{PC}=\{\{x\}: x\in K\}$. Actually, in the former case $K$ is an indecomposable continuum and in the latter case it is  homeomorphic to $[0,1]$. Finally, if  $X$ is an unshielded continuum and $Y\subset X$ is a subcontinuum, Blokh-Oversteegen-Timorin \cite{BOT17} obtained recently a sufficient condition for the core decomposition $\Dc_Y^{PC}$ of $Y$ to be embedded canonically into that of $X$. As an application to complex dynamics, the authors also considered the special case that $X$ is the connected Julia set of a renormalizable polynomial $P$ and $Y$ is the so-called {\em small Julia set}, for a polynomial-like map obtained as a restriction of some iterate $P^n$ with $n>1$. Combining these results with the core decomposition obtained in our paper, one may investigate problems like the local connectedness of the Julia set of an infinitely renormalizable polynomial.
\end{remark}

We arrange our paper as follows. Section~\ref{sec:lclamcore} briefly recalls facts on local connectedness, laminations in complex dynamics and core decompositions. In this section we also prove that each unshielded Peano compactum is finitely Suslinian (see Theorem~\ref{FS}). From this, one can infer that the core decompositions $\mathcal{D}_K^{PC}$ and $\mathcal{D}_K^{FS}$ of an unshielded compactum $K\subset\mathbb{C}$ are equal. Section~\ref{lemmas} gives preliminary lemmas needed in the proofs of the main theorems. Section~\ref{sec:necsuff} proves Theorems \ref{necessary} to \ref{sufficient}. Sections~\ref{sec:ps} and~\ref{sec:cd}  respectively prove Theorems~\ref{quotient} and~\ref{finest}. Finally, in Section 7  we construct a concrete continuum $K\subset\bbR^3$ having no core decomposition with Peano quotient. In this section, we also show that $\Dc_K^{PC}$ is entirely determined by the topology of $K$, if it exists (see  Theorem \ref{topological-CD}).

\textbf{Acknowledgments.} The authors are grateful to the referee for the suggestions: they greatly improved the paper. In particular, we thank the referee for pointing out (1) an interesting  question leading to Remark~\ref{rmk:RelationCurry}; (2) the context of dynamical systems for Curry's questions proposed in \cite{Curry10}; (3) the necessity to provide an example of a compactum which does not have a core decomposition with Peano quotient; and (4) the exact references of well-known results in topology from which some of our preliminary lemmas or results in Section~\ref{lemmas} could be deduced. The rearrangement of our lemmas in Section~\ref{lemmas} according to the referee's suggestions actually allowed us to simplify our arguments, especially in the proof of Theorem~\ref{quotient-con} and  Theorem~\ref{finest}.

% Indeed, the earlier proof for Theorem \ref{quotient-con} depends a lemma (Lemma 3.5 in the previous edition), which is now not needed and removed in the current edition.

\section{Local Connectedness, Lamination, and Core Decomposition}\label{sec:lclamcore}

The investigation of local connectedness dates back to the nineteenth century. Cantor proved that the unit interval and the unit square have the same cardinality. In other words, there exists a bijection $h:[0,1]\rightarrow[0,1]^2$, and this map $h$ can not be continuous. Peano and some of his contemporaries further obtained continuous surjections from $[0,1]$ onto planar domains like squares and triangles. The range of a continuous map from $[0,1]$ into a metric space is therefore often called a \emph{Peano continuum}. Peano continua were then fully characterized via the notion of local connectedness: indeed, Hahn and Mazurkiewicz showed that a continuum is a Peano continuum if and only if it is locally connected.

Among the Peano continua of the plane, the boundary of a bounded simply connected domain $U$ provides a special case. By the Riemann Mapping Theorem, there is a conformal isomorphism from the unit open disk $\bbD=\{|z|<1\}$ onto $U$. Furthermore, Carath\'eodory's theorem states that this  conformal mapping has a continuous extension to the closed disk $\overline{\bbD}$ if and only if the boundary $\partial U$ is locally connected. Considering $U$ as a domain in the extended complex plane $\hat{\bbC}$, we may assume, after the action of a M\"obius map, that $\infty\in U$. Then $X=\mathbb{C}\setminus U$ is a \emph{full}  continuum, \emph{i.e.}, it has a connected complement $U$. Moreover,  there is a conformal isomorphism $\Phi$ from $\bbD^*=\{|z|>1\}\cup\{\infty\}$ onto $U$, fixing  $\infty\in\hat{\bbC}$ and having a real derivative at $\infty$.

In the study of quadratic dynamics, examples of the above map $\Phi$ are (1) B\"ottcher maps for hyperbolic polynomials $z\mapsto z^2+c$ with $c$ lying in a hyperbolic component of the Mandelbrot set $\Mc$ and (2) the conformal isomorphism sending $\bbD^*$ onto  $\hat{\bbC}\setminus\Mc$.

For the  map $\Phi$ in (1), the boundary of $\Phi(\bbD^*)$ is the Julia set $J_c$ of $z\mapsto z^2+c$, which is known to be locally connected. In this case,  $J_c$ is the image of the unit circle $\partial\bbD=\partial\bbD^*$ under a continuous map (called Carath\'eodory's loop), hence may be considered as the quotient space of an equivalence on $\partial\bbD$. This equivalence is a {\em lamination} in Thurston's sense \cite{Thurston09}. Douady \cite{Douady93} proposed a pinched disc model describing full locally connected continua in the plane. Extending the lamination in a natural way to a closed equivalence relation $\mathcal{L}$ on the closed unit disk $\overline{\bbD}$,  he obtains that $K_c$ is homeomorphic with the quotient $\overline{\bbD}/\!\mathcal{L}$, where $K_c$ is the filled Julia set of the polynomial $z\mapsto z^2+c$ ($J_c=\partial K_c$).

The pinched disc model still works, even if the full continuum $K$ is not locally connected. The map $\Phi$ in (2) provides a typical example, in  which the boundary of $\Phi(\bbD^*)$ coincides with that of the Mandelbrot set $\Mc$. Denote by $\Rc_\theta$ the image of $\{re^{2\pi\theta{\bf i}}: r>1\}$ under $\Phi$ for $\theta\in[0,1]$. $\Rc_\theta$ is called the {\em external ray at $\theta$}. If $\lim\limits_{r\rightarrow 1}\Phi\left(re^{2\pi\theta{\bf i}}\right)$ is a point on $\partial\Mc$, denoted as $c_\theta$, we say that $\Rc_\theta$ \emph{lands at} $c_\theta$. It is known that all external rays $\Rc_\theta$ with rational $\theta$ must land. Douady therefore \cite{Douady93} defines an equivalence relation on
$\displaystyle \left\{e^{2\pi\theta{\bf i}}: \theta\in\bbQ\cap[0,1]\right\}$ by setting $\theta\sim^\bbQ_\Mc\theta'$ if and only if $c_\theta=c_{\theta'}$. As a subset of $\partial\bbD\times\partial\bbD$, the closure of $\sim^\bbQ_\Mc$ (denoted $\sim_\Mc$) turns out to be an equivalence relation on $\partial\bbD$ (see \cite[Theorem 3]{Douady93} for fundamental properties of $\sim_\Mc$).

Let us now recall the main ideas of Blokh-Curry-Oversteegen \cite{BCO11} concerning locally connected models for unshielded continua in the plane. Let $K\subset\hat{\bbC}$ be an unshielded continuum  with $K=\partial U$, where $U$ is the unbounded component of $\bbC\setminus K$. Let $\Phi$ be a conformal mapping that sends $\bbD^*$ to $U$ and fixes $\infty$. For any $\theta\in[0,1]$, the {\em impression} at $e^{2\pi\theta{\bf i}}$, defined by
\[{\rm Imp}(\theta)=\left\{\lim\limits_{i\rightarrow\infty}\Phi(z_i): \ \{z_i\}\subset\bbD^*,\  \lim\limits_{i\rightarrow\infty}z_i=e^{2\pi\theta{\bf i}}\right\},\]
is a subcontinuum of $K$. By \cite[Lemma 13]{BCO11} there is a minimal closed equivalence $\mathcal{I}$ on $K$ whose classes are made up of prime end impressions. By \cite[Lemma 16]{BCO11}, if $\Rc$ is an arbitrary closed equivalence on $K$ such that the quotient space $K/\Rc$ is a locally connected continuum then $\mathcal{I}$ is contained in $\Rc$ (as subsets of $K\times K$). The first part of \cite[Lemma 17]{BCO11} obtains that the quotient $K/\!\mathcal{I}$ is a locally connected continuum, called the locally connected model of $K$. Now we may define a closed equivalence relation $\sim_{K}$ on $\partial\bbD$ by requiring that $\theta\sim_{K}\theta'$ if and only if ${\rm Imp}(\theta)$ and ${\rm Imp}(\theta')$ lie in the same equivalence class $[x]_\mathcal{I}$. Then, by the second part of \cite[Lemma 17]{BCO11}, the equivalence $\sim_{K}$ is a lamination such that the induced  quotient $\partial\bbD/\!\sim_K$ is homeomorphic to $K/\!\mathcal{I}$.

In particular, when $K$ is the Julia set of a polynomial $f$ of degree $d\ge2$ without irrationally neutral cycles, Kiwi \cite{Kiwi04} investigates the structure of the classes $[x]_\mathcal{I}$ and shows that every $[x]_\mathcal{I}$ coincides with the {\em fiber at $x\in K$} \cite[Definition 2.5]{Kiwi04} defined by
\[{\rm Fiber}(x)=\left\{y\in K: \ \text{no\ finite\ set\ of\ eventually\ periodic\ points\ separates}\ y\ \text{and}\  x\ \text{in}\  K\right\}.\]
Here, a finite set $C\subset K$ \emph{separates} two points of $K$ if these points are in distinct components of $K\setminus C$. We refer to~\cite[Corollary 3.14]{Kiwi04} and \cite[Proposition 3.15]{Kiwi04} for important properties of ${\rm Fiber}(x)$, and to Schleicher's works \cite{Schleicher99a,Schleicher99b,Schleicher04} for another approach to the study of fibers.

To generalize the above model, Blokh, Curry and Oversteegen \cite{BCO13} define an equivalence $\simeq$ on an unshielded compactum $K\subset\mathbb{C}$ to be the minimal closed equivalence such that every limit continuum is contained in a single class $[x]_\simeq:=\{z\in K: z\simeq x\}$. Recall that a limit continuum is the limit $\lim\limits_{k\rightarrow\infty}N_k$ under Hausdorff distance of an infinite sequence of pairwise disjoint subcontinua $N_k\subset K$. The quotient space $\mathcal{D}_K^{FS}=\{[x]_\simeq: x\in K\}$ is necessarily a compact metrizable space \cite[p.38, Theorem 3.9]{Nadler92}. The authors of \cite{BCO13} further show that it is finitely Suslinian
\cite[Lemma 13]{BCO13}.

Every element $d$ of the above decomposition $\mathcal{D}_K^{FS}$,  as a subset of $\mathbb{C}$,  possesses the following  property: the union of all the bounded components of $\mathbb{C}\setminus d$ does not intersect $K$. The authors of \cite{BCO13}  then employ Moore's theorem to prove that $\mathcal{D}_K^{FS}$ is the finest monotone decomposition of $K$ with finitely Suslinian quotient \cite[Theorem 19]{BCO13}. In other words, $\mathcal{D}_K^{FS}$ is the core decomposition of $K$ with respect to the finitely Suslinian property.

Let now $K\subset\mathbb{C}$ be an arbitrary compactum. Let $\mathcal{D}_K^{\simeq}=\{[x]_\simeq: x\in K\}$ with $\simeq$ defined as above. On the other side, let $\sim$ be the Sch\"onflies equivalence on $K$, defined in Definition~\ref{model} as the minimal closed equivalence  relation containing the relation $R_K$. We write $\mathcal{D}_K=\{[x]_\sim: x\in K\}$. We want to compare these two decompositions.  The definition of $R_K$ indicates that if $(z_1,z_2)\in R_K$ then there is a limit continuum containing both $z_1$ and $z_2$. This in turn indicates that $\sim$ is contained in $\simeq$ as subsets of $K\times K$, hence the decomposition $\mathcal{D}_K$ always refines $\mathcal{D}_K^{\simeq}$.

These two decompositions turn out to be equal provided that $K$ is unshielded. Note that in this case $\mathcal{D}_K^{\simeq}=\mathcal{D}_K^{FS}$ is the core decomposition of $K$ with respect to the finitely Suslinian property \cite[Theorem 19]{BCO13}. Actually, the unshielded assumption of $K$ implies that the bounded components of $\mathbb{C}\setminus d$ for every $d\in\mathcal{D}_K$ are all disjoint from $K$. Let $d^*$ be the union of $d$ with the bounded components of $\mathbb{C}\setminus d$. Then
\[\mathcal{D}_{\mathbb C}:=\left\{d^*: d\in\mathcal{D}_K\right\}\cup\left\{\{z\}: z\notin\left(\bigcup_{d\in\mathcal{D}_K} d^*\right)\right\}\]
is a monotone decomposition of $\mathbb{C}$, such that $d_1^*\cap d_2^*=\emptyset$ for any $d_1\ne d_2\in\mathcal{D}_K$. By Moore's Theorem, the quotient $\mathcal{D}_{\mathbb C}$ is homeomorphic to the plane and the natural projection $\Pi: \mathbb{C}\rightarrow\mathcal{D}_{\mathbb C}$ sends $K$ to a planar compactum. Since every $d^*$ is disjoint from the unbounded component $W$ of $\mathbb{C}\setminus K$, the image $\Pi(W)$ is a region in the plane $\mathcal{D}_{\mathbb C}$ whose boundary contains $\Pi(K)$. That is to say, $\Pi(K)$ is also an unshielded compactum in $\mathcal{D}_{\mathbb C}$, which is homeomorphic with the plane $\bbC$. On the other hand, for any $x,y\in K$ it is direct to check that $\Pi(x)=\Pi(y)$ if and only if $\pi(x)=\pi(y)$. Therefore, the quotient $\mathcal{D}_K$ is homeomorphic to $\Pi(K)$ hence may be embedded into the plane as an unshielded compactum. Theorem \ref{quotient} of this paper says that $\mathcal{D}_K$ is also a Peano compactum. By the following theorem, such a planar compactum is finitely Suslinian. Consequently, the core decomposition decomposition $\mathcal{D}_K^{FS}$ is finer than $\mathcal{D}_K$, and we have  $\mathcal{D}_K^{FS}=\mathcal{D}_K$.

\begin{theo}\label{FS} Each unshielded Peano compactum $K\subset\mathbb{C}$ is finitely Suslinian. \end{theo}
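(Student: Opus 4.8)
The plan is to argue by contraposition: assuming $K$ is an unshielded Peano compactum that is \emph{not} finitely Suslinian, I would produce a violation of one of the two defining properties of a Peano compactum from Theorem~\ref{sufficient}, or (more likely) a direct violation of the Sch\"onflies condition, which by Theorem~\ref{sufficient} is equivalent to being a Peano compactum. So suppose there is an infinite collection $\{N_k\}$ of pairwise disjoint subcontinua of $K$ whose diameters are bounded below by some $\delta>0$. By passing to a subsequence and using compactness of the hyperspace of subcontinua (the Hausdorff metric), I may assume $N_k\to N$ for some subcontinuum $N$ of $K$ with $\diam N\geq\delta$; this $N$ is a limit continuum. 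If $N$ is nondegenerate, I will combine it with the unshielded hypothesis. The key geometric input is that, because $K$ is unshielded (so $K=\partial W$ for the unbounded complementary component $W$), every subcontinuum of $K$, in particular $N$ and the $N_k$, is ``accessible from $W$ on both sides'' in the sense that near $N$ there is a lot of room in $W$; more precisely, since the $N_k$ are disjoint from $N$ but converge to it, and $W$ is dense around $K$, one can thread a curve through $W$ crossing infinitely many of the $N_k$.

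Concretely, the main step is to build two disjoint parallel lines $L_1,L_2$ (or, after the equivalent formulation, two nested simple closed curves, but here lines suffice since we are in $\bbC$) such that the closed strip $\overline{U}=\overline{U(L_1,L_2)}$ meets $K$ in infinitely many components each hitting both $L_1$ and $L_2$, contradicting the Sch\"onflies condition. To do this I would pick two points $a,b\in N$ with $|a-b|$ comparable to $\delta$, and choose $L_1,L_2$ to be short line segments' affine spans positioned transversally near $a$ and near $b$ respectively, at distance roughly $\delta/4$ on either side, so that the strip between them is a thin neighborhood of a ``tube'' joining a neighborhood of $a$ to a neighborhood of $b$. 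Since $N_k\to N$ and each $N_k$ is a continuum of diameter $\geq\delta$, for $k$ large $N_k$ comes within $\varepsilon$ of both $a$ and $b$; being connected, $N_k$ must then contain a subcontinuum lying inside the strip $\overline{U}$ and touching both boundary lines. Distinctness of these subcontinua for different $k$ follows from the $N_k$ being pairwise disjoint. That yields infinitely many components of $\overline U\cap K$ meeting both $L_1$ and $L_2$ — the forbidden configuration.

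The main obstacle, and the place where the \emph{unshielded} hypothesis is really needed, is ensuring that the strip $U(L_1,L_2)$ can be chosen so that $L_1$ and $L_2$ genuinely separate the relevant pieces of the $N_k$ — i.e.\ that each $N_k$ really does cross the strip rather than staying on one side or weaving around the ends of the segments. Without unshieldedness, a limit continuum could be ``buried'' inside $K$ with the $N_k$ approaching it through the interior of $K$, and no such separating strip need exist (this is precisely why the theorem is false for general planar compacta). With unshieldedness, $K$ has empty interior relative to $\hat\bbC$ in the strong sense that its complement's unbounded component is dense along $K$, so I can perturb $L_1,L_2$ into $W$ away from $K$, keeping them disjoint from $N$ and from all but finitely many $N_k$, while still forcing the remaining $N_k$ to traverse the strip; making this perturbation argument precise — choosing the lines to avoid $N$, pass through $W$, yet still catch infinitely many $N_k$ transversally — is the technical heart of the proof. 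Once the Sch\"onflies condition is contradicted, Theorem~\ref{sufficient} (or directly the definition) finishes the argument, since a Peano compactum satisfies the Sch\"onflies condition.
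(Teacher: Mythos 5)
The decisive step in your sketch is the claim that, because the $N_k$ are pairwise disjoint, the crossing subcontinua of $\overline{U}\cap N_k$ produced near $a$ and $b$ give infinitely many \emph{components} of $\overline{U}\cap K$ meeting both $L_1$ and $L_2$. That inference is false as stated: pairwise disjoint crossing subcontinua of $K$ may all lie in one and the same component of $\overline{U}\cap K$, in which case the Sch\"onflies condition is not violated at all. Sierpi\'nski's universal curve shows the danger: it is locally connected, hence satisfies the Sch\"onflies condition by Theorem~\ref{LC-criterion}, yet for a suitable strip it contains infinitely many pairwise disjoint continua crossing the strip; your argument applied verbatim would ``refute'' the Sch\"onflies condition for it. So the unshielded hypothesis (and, in the paper's treatment, the Peano/locally connected hypothesis on $K$) must be used precisely at this point, to place the disjoint crossing continua in distinct components of $\overline{U}\cap K$ --- equivalently, by Lemma~\ref{key-connection}, to produce crossing components of $\overline{U}\setminus K$ separating them. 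The use of unshieldedness you propose (perturbing $L_1,L_2$ into the unbounded complementary component $W$ so that the lines avoid $K$) does not address this: whether or not the lines meet $K$, the pieces of the $N_k$ inside the strip can still be joined to one another through $K$ within the strip. Note also that each $N_k$ crossing the strip is automatic from Hausdorff convergence and connectedness, so the difficulty you single out as the ``technical heart'' is not where the real work lies; what remains unproved in your plan is a statement essentially equivalent to the theorem itself.

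For comparison, the paper takes a different route that avoids this issue entirely: it first reduces to the case of a continuum (via Theorem~\ref{sufficient} and the definition of Peano compactum), then shows that an unshielded Peano continuum is a \emph{regular} continuum, using the fact from \cite{NLC-1} that for such continua every pseudo fiber is a singleton, so any point can be separated from any compact set outside a given neighborhood by finitely many ``good cuts'' meeting $K$ in finite sets; intersecting the corresponding regions yields neighborhoods with finite boundary, and a regular continuum is finitely Suslinian. If you wish to salvage your approach, you would need a genuine lemma asserting that in an unshielded, locally connected compactum, disjoint continua crossing a strip are separated by crossing channels of the complement; proving that is essentially the substance that your sketch leaves out (and is close to the content of \cite{BCO13}).
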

\begin{proof} By Theorem \ref{sufficient} and the definition of Peano compactum, we only need to consider the simpler case when $K$ is actually a continuum. Recall that a continuum $X$ is {\em regular} at a point $x\in X$ if for every neighborhood $V_x$ of $x$ there exists a neighborhood $U_x\subset V_x$ of $x$ whose boundary $\partial U_x=\overline{U_x}\cap\overline{X\setminus U_x}$ is a finite set \cite[p.19]{Whyburn42}. A {\em regular} continuum is just one that is regular at each of its points. Here it is standard to check that a regular continuum is finitely Suslinian. Therefore, our proof will be completed if only we can verify that  an unshielded Peano continuum $K$ is a regular continuum.

We will use pseudo fibers and fibers recently introduced in \cite{NLC-1}, from which a numerical scale is developed that measures the extent to which such a continuum is locally connected.

More precisely, for any point $x\in K$, the pseudo fiber $E_x$  at $x$ consists of the points $y\in K$ such that there does not exist a simple closed curve $\gamma$ with $\gamma\cap \partial X$ a finite set, called a good cut, such that $x$ and $y$ lie in different component of $\mathbb{C}\setminus\gamma$; the fiber $F_x$ at $x$ is the component of $E_x$ that contains $x$.
By \cite[Proposition 4.2]{NLC-1}, for the locally connected unshielded continuum $K\subset\mathbb{C}$ every pseudo fiber $E_x$ equals the single point set $\{x\}$. Therefore, given any $x\in K$ and any open set $U\ni x$, we can choose for each $y\in K\setminus U$ a good cut $\gamma_y$ such that  $x$ and $y$ lie in different component of $\mathbb{C}\setminus\gamma_y$. Let $U_y, V_y$ be the component of $\mathbb{C}\setminus\gamma_y$ with $x\in U_y$ and $y\in V_y$. Then $\{V_y: y\in K\setminus U\}$ is an open cover of the compact set $K\setminus U$. Fix a finite sub-cover $\left\{V_{y_1},\ldots, V_{y_n}\right\}$. Then
\[U_x:=\bigcap_{i=1}^n U_{y_i}\]
is open in $K$, contains $x$, and is contained in $U$. Recall that, for $1\le i\le n$, the intersection $B_i:=\overline{U_{y_i}}\cap \overline{V_{y_i}}\cap K$ is contained in $\gamma_{y_i}\cap K$ hence is also a finite set. Since the boundary of $U_x$ in $K$ is defined to be the intersection
$\overline{U_x\cap K}\cap(K\setminus U_x)$ and is a subset of
\[\bigcup_{i=1}^n \left(\overline{U_x\cap K}\cap\overline{V_{y_i}}\cap K\right),\]
which is in turn a subset of $\bigcup_iB_i$ and hence is also a finite set. This verifies that $K$ is regular at $x$. Consequently, from flexibility of $x\in K$  we can infer that $K$ is a regular continuum.
\end{proof}

Note that another proof of this theorem can be found in~\cite[Lemma 2.7]{BO04}.

We mention that if the compactum $K\subset \mathbb{C}$ is not assumed to be unshielded, then the core decomposition of $K$ with respect to the finitely Suslinian property may not exist. For instance, the unit square $[0,1]^2$ is not finitely Suslinian and may be decomposed into vertical lines or into horizontal lines, both with a quotient space that is finitely Suslinian; while the only decomposition finer than these two decompositions is the one that decomposes $[0,1]^2$ into singletons. For cases of one-dimensional continua, one may  consider the locally connected continuum $K\subset\mathbb{C}$ given in~\cite[Example 14]{BCO13}. It admits two monotone decompositions $\mathcal{D}_1,\mathcal{D}_2$ such that  the quotients are finitely Suslinian. However, the only partition finer than both $\mathcal{D}_1$ and $\mathcal{D}_2$ is the trivial decomposition $\{\{x\}: x\in K\}$. Therefore, the core decomposition of $K$ with respect to the finitely Suslinian property does not exist, while the trivial decomposition $\{\{x\}: x\in K\}$ is the core decomposition of $K$ with respect to the property of being a Peano compactum.
%The same strategy does not work if we want to obtain the core decomposition $\mathcal{D}_K^{PC}$ of a compactum $K\subset\mathbb{C}$ (that may not be unshielded) with respect to the property of being a Peano compactum.

We end up this section with an example of a continuum $K\subset\mathbb{C}$ having two properties. Firstly, the core decomposition $\mathcal{D}_K$ has an element $d$ such that at least two components of $\mathbb{C}\setminus d$ intersect $K$; secondly, the resulted quotient space $\mathcal{D}_K$ can not be embedded into the plane.

\begin{exam}\label{no-FS-core}
Let the compactum $K\subset\mathbb{C}$ be the union of the closure of the unit disk $\mathbb{D}=\{z\in\mathbb{C}: |z|<1\}$ and the spiral curve $L=\left\{\left(1+e^{-t}\right)e^{2\pi{\bf i}t}: t\ge0\right\}$. By routine works one may check that the core decomposition of $K$ with respect to the property of being a Peano compactum is exactly given by $\mathcal{D}_K=\left\{\{x\}: x\in \left(\mathbb{D}\cup L\right)\right\}\cup\{\partial \mathbb{D}\}.$
Clearly, the quotient space is the one-point union of a sphere with a segment, thus can not be embedded into the plane. \qed
%Here, the obstruction for the quotient  $\mathcal{D}_K$ to be embedded in the plane is the fact that $\partial \mathbb{D}$ is an element of the core decomposition but the two components of $\mathbb{C}\setminus\partial\mathbb{D}$ both intersect $K$.
\end{exam}

%Generally, given a planar compactum $K\subset\hat{\mathbb{C}}$ and the core decomposition $\mathcal{D}_K\ne\{K\}$, so that the quotient is not a single point. The quotient space $\mathcal{D}_K$ is generally not homeomorphic to any compactum in the plane. Letting $d^*$ be the union of $d$ with all the components of $\hat{\mathbb{C}}\setminus d$ that are disjoint from $K$. If for every element $d\in\mathcal{D}_K$ exactly one component of $\mathbb{C}\setminus d$ intersects $K$ then $\mathcal{D}_K$ induces a monotone decomposition\[\left\{d^*: d\in\mathcal{D}_K\right\}\cup\left\{\{z\}: z\notin\left(\bigcup_{d\in\mathcal{D}_K} d^*\right)\right\}\] of $\hat{\mathbb{C}}$ which satisfies the assumptions of Moore's Theorem. In this case, the quotient $\mathcal{D}_L$ is homeomorphic with a compactum in the plane.

\section{Some Useful Lemmas}\label{lemmas}
The aim of this section is to prove Lemma~\ref{separation}. This lemma will be a great tool in order to obtain a characterization of the Sch\"onflies condition. Lemmas~\ref{brickwall} and~\ref{separating-gamma} and Theorems \ref{cut-wire} and \ref{torhorst} are known or may be deduced from known results. They are used to prove Lemma~\ref{separation}.

Let us start by recalling some notions and facts. For $X\subset \mathbb{C}$, we say that $X=A\cup B$ ($A,B\ne\emptyset$) is a \emph{separation of} $X$  if $\overline{A}\cap B=A\cap\overline{B}=\emptyset$. Moreover, given $M\subset X$ and $a,b\in X$, we say that $M $\emph{separates} $X$ \emph{between} $a$ \emph{and} $b$ if there is a separation $X\setminus M=A\cup B$ such that $a\in A$ and $b\in B$. Furthermore, remember that, if $x_0$ is a point in $X$ then the \emph{component} of $X$ containing $x_0$ is the maximal connected set $P\subset X$ with $x_0\in P$. The \emph{quasi-component} of $X$ containing $x_0$ is defined to be the set
\[Q=\{y\in X: \ \text{no\ separation}\ X=A\cup B\ \text{exists\ such\ that}\ x\in A, y\in B\}.\]
Equivalently, the quasi-component containing a point $p\in X$ may be defined as the intersection of all closed-open subsets of $X$ containing $p$. Any component is contained in a quasi-component, and the quasi-components coincide with the components whenever $X$ is assumed to be a compact set \cite[Section 47, Chapter II, Theorem 2]{Kuratowski68}.

The first of our lemmas arises from a basic property of the unit square $[0,1]^2$.

\begin{lemm}\label{brickwall}
Suppose that  $A\subset [0,1)\times[0,1]$ and $B\subset(0,1]\times[0,1]$ are disjoint closed sets. Then there exists a path in $[0,1]^2\setminus(A\cup B)$ starting from a point in $(0,1)\times\{0\}$ and leading to a point in $(0,1)\times\{1\}$.
\end{lemm}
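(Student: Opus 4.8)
\textbf{Proof plan for Lemma~\ref{brickwall}.}

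The plan is to argue by contradiction: suppose no such path exists, and extract from this a ``blocking'' connected set that joins the left side to the right side of the square while avoiding the top and bottom edges, which will be impossible because it would have to meet both $A$ and $B$ yet $A\cup B$ is separated into two disjoint closed pieces. Concretely, let $S=[0,1]^2$ and $F=A\cup B$, and let $P$ be the union of all path components of $S\setminus F$ that meet the bottom edge $(0,1)\times\{0\}$ — equivalently, since $S\setminus F$ is an open subset of the plane and hence locally path-connected, $P$ is the union of the (ordinary) components of the open set $S\setminus F$ that touch the bottom edge. The assumption says $P$ is disjoint from the top edge $(0,1)\times\{1\}$. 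First I would record that $P$ is open in $S$ (a union of components of an open, locally connected set) and that $\overline{P}\setminus P\subset F$ (the frontier of $P$ inside $S$ cannot meet $S\setminus F$, again by local path-connectedness of $S\setminus F$).

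Next I would isolate the genuinely topological heart of the matter. Consider $T:=\overline{P}\setminus\big((0,1)\times\{0\}\big)$ — or more carefully, the ``upper frontier'' of $P$. The key geometric step is to produce a connected subset $\Gamma\subset S\setminus\big((0,1)\times(\{0\}\cup\{1\})\big)$ that meets the left edge $\{0\}\times[0,1]$ and the right edge $\{1\}\times[0,1]$ and is contained in $F\cup\big(\{0,1\}\times[0,1]\big)$. The cleanest way I know to do this rigorously is the standard ``no-crossing'' argument for the square: one shows that in $[0,1]^2$, if a closed set does not separate the bottom edge from the top edge, then its complement contains a connected set joining the left and right edges. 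Equivalently, I can invoke the combinatorial/topological dichotomy directly: in the square, \emph{either} there is a path in the complement of $F$ from bottom to top, \emph{or} there is a connected subset of $F\cup(\text{left and right edges})$ from the left edge to the right edge that avoids the (open) top and bottom edges. This is exactly a form of the discrete Hex / Brouwer-type lemma, and it is also derivable from the plane separation results cited later in the paper (the Cut Wire / Plane Separation theorems), so I would cite whichever is stated. Since a whole path from left edge to right edge cannot be avoided, there is such a connected ``wall'' $\Gamma$.

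Now comes the contradiction. Because $A\subset[0,1)\times[0,1]$, the left edge $\{0\}\times[0,1]$ is disjoint from $B$; because $B\subset(0,1]\times[0,1]$, the right edge $\{1\}\times[0,1]$ is disjoint from $A$. The wall $\Gamma$ starts on the left edge and ends on the right edge and lies in $F\cup(\{0,1\}\times[0,1])=A\cup B\cup(\{0,1\}\times[0,1])$. Near its left endpoint $\Gamma$ is in $A\cup(\{0\}\times[0,1])$ and near its right endpoint it is in $B\cup(\{1\}\times[0,1])$; since $A,B$ are disjoint closed sets and the two vertical edges are disjoint closed sets disjoint from the ``wrong'' one of $A,B$, the set $\Gamma$ would be partitioned by the two disjoint closed sets $A\cup(\{0\}\times[0,1])$ and $B\cup(\{1\}\times[0,1])$ into two nonempty relatively closed pieces, contradicting connectedness of $\Gamma$. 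Hence the assumed path from bottom to top must exist. (One small point to handle with care: showing $\Gamma$ can be taken to avoid the closed corners and the open top/bottom edges cleanly, so that the partition above is into \emph{disjoint} closed sets; this is where a careful choice of $\Gamma$, or an $\varepsilon$-collar argument pushing $\Gamma$ slightly off the edges, is needed.)

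\textbf{Main obstacle.} The crux is the ``dichotomy'' step producing the connected wall $\Gamma$ from the failure of path-connectivity across the square. Intuitively obvious (it is just the statement that you cannot cross a square both ways), it requires either the Cut Wire Theorem / Plane Separation Theorem machinery or a Hex-lemma argument to make rigorous; everything else (openness of $P$, frontier contained in $F$, the final connectedness contradiction) is routine. I expect to phrase this step as an application of Theorem~\ref{cut-wire} (the Cut Wire Theorem) once it is available.
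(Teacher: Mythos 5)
Your overall architecture is reasonable, and your final step is correct and even simpler than you make it: once you have a connected set $\Gamma\subset A\cup B\cup\bigl(\{0,1\}\times[0,1]\bigr)$ meeting both vertical edges, the two sets $A\cup\bigl(\{0\}\times[0,1]\bigr)$ and $B\cup\bigl(\{1\}\times[0,1]\bigr)$ are disjoint and closed (because $A$ misses the right edge, $B$ misses the left edge, and $A\cap B=\emptyset$), so they disconnect $\Gamma$; in particular the extra requirement that $\Gamma$ avoid the open top and bottom edges, which you flag as a delicate point, is not needed at all. The genuine gap is the ``dichotomy'' producing $\Gamma$. That step is not a side issue to be outsourced: it is exactly the two--dimensional content of the lemma, and the tool you propose for it cannot deliver it. Theorem~\ref{cut-wire} (the Cut Wire Theorem) is a purely point-set statement about compacta with no planarity in it; applied to $X=A\cup B\cup\bigl(\{0,1\}\times[0,1]\bigr)$ with the two vertical edges as the closed sets, its contrapositive only converts ``no connected wall from left to right'' into a separation $X=X_1\sqcup X_2$ with $X_1$ containing the left edge and missing the right edge, and $X_2$ containing the right edge and missing the left edge --- which is literally the hypothesis of Lemma~\ref{brickwall} with $A=X_1$, $B=X_2$. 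So ``wall or crossing path'' derived this way is circular. Also, there is no ``Plane Separation Theorem'' stated in the paper that you could invoke; the only stated results in Section~\ref{lemmas} are the Cut Wire Theorem and the Torhorst Theorem. The Hex/Brouwer-type route you mention would work, but you only name it; as written, the one nontrivial implication of the proof is asserted rather than proved. (The opening construction of the set $P$ of components reachable from the bottom edge is likewise never used later.)

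For comparison, the paper argues directly rather than by contradiction: it cites a Janiszewski-type theorem for the square (Kuratowski, \S57, II, Theorem~3): if two \emph{disjoint} closed subsets of $[0,1]^2$ each fail to separate $[0,1]^2$ between $p_0$ and $p_1$, then their union fails to separate $p_0$ from $p_1$. One then chooses $p_0$ in the open bottom edge and $p_1$ in the open top edge separated neither by $A$ (which misses the right edge) nor by $B$ (which misses the left edge), and uses local path-connectedness of the open set $[0,1]^2\setminus(A\cup B)$ to upgrade ``same component'' to an arc. If you want to salvage your plan, the cleanest fix is to replace the Hex/wall dichotomy by this unicoherence/Janiszewski input (or prove the crossing dichotomy honestly via a Hex-type lattice approximation); without some such genuinely planar theorem, your outline does not constitute a proof.
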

The above lemma also appears as~\cite[Lemma 2.1]{LAT04}. In fact, it is a direct corollary of the following  property of $[0,1]^2$. Fix $p_i\in[0,1]\times\{i\}$ for $i=0,1$ and two disjoint closed subsets  $B_0,B_1$ of $[0,1]^2\setminus\{p_0,p_1\}$. Suppose that none of $B_0$ and $B_1$ separates $[0,1]^2$ between $p_0$ and $p_1$. Then $B_0\cup B_1$ does not separate $[0,1]^2$ between $p_0$ and $p_1$ \cite[p.347, $\S57$, II, Theorem 3]{Kuratowski68}. In such a case, we can find an arc in $[0,1]^2$ with end points $p_0,p_1$, which is disjoint from $B_0\cup B_1$.

We further recall two well known results, the {\bf Cut Wire Theorem} \cite[p.72, Theorem 5.2]{Nadler92} and {\bf Torhorst Theorem} \cite[p.512, \S 61, II, Theorem 4]{Kuratowski68}.

\begin{theo}[{\bf Cut Wire Theorem}]\label{cut-wire}
Let  $A$ and $B$ be closed subsets of a compactum $X$. If no connected subset of $X$ intersects both $A$ and $B$, then $X=X_1\cup X_2$ where $X_1$ and $X_2$ are disjoint closed subsets of $X$ with $A\subset X_1$ and $B\subset X_2$.
\end{theo}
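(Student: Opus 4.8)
The plan is to derive the statement from the classical fact, recalled just above in the excerpt, that \emph{components and quasi-components coincide in a compactum}. First I would note that the hypothesis is stable under passing to components: for every $a\in A$, the component $C_a$ of $X$ containing $a$ is a connected subset of $X$ that meets $A$, so by hypothesis $C_a\cap B=\emptyset$. (In particular, applying the hypothesis to singletons already forces $A\cap B=\emptyset$.) Since $X$ is compact, $C_a$ equals the quasi-component of $a$, which by definition is the intersection $\bigcap\{U: U\text{ clopen in }X,\ a\in U\}$. Hence this intersection is disjoint from the compact set $B$.

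Next I would convert this into a single clopen separator near each point of $A$. The sets $U\cap B$, for $U$ clopen with $a\in U$, form a family of closed subsets of the compactum $B$ with empty total intersection; by the finite intersection property, finitely many of them, say $U_1\cap B,\dots,U_n\cap B$, already have empty intersection. Then $U_a:=U_1\cap\dots\cap U_n$ is clopen in $X$, contains $a$, and satisfies $U_a\cap B=\emptyset$. Now $\{U_a: a\in A\}$ is an open cover of the compact set $A$, so it admits a finite subcover $U_{a_1},\dots,U_{a_m}$. Setting $X_1:=U_{a_1}\cup\dots\cup U_{a_m}$ gives a clopen subset of $X$ with $A\subset X_1$ and $X_1\cap B=\emptyset$, and $X_2:=X\setminus X_1$ is then clopen, contains $B$, and $X=X_1\cup X_2$ is a partition into two disjoint closed sets, as required.

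The only nontrivial ingredient is the identification of components with quasi-components in a compactum, which the excerpt already cites; the rest is routine compactness bookkeeping (finite intersection property on $B$, then finite subcover of $A$). So I do not expect a serious obstacle: the ``hard part'' is really just recognizing that quasi-components are the right tool, since without that identification one would essentially have to reprove it — e.g.\ by a Zorn's-lemma argument producing, for each $a\in A$, a clopen set containing $a$ and missing $B$ — which is precisely the content of that cited theorem.
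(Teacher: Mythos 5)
Your proof is correct and complete: the reduction to quasi-components, the finite-intersection-property step inside the compact set $B$, and the finite subcover of $A$ are all sound, and the edge cases ($A\cap B=\emptyset$, or one of the sets empty) cause no trouble. Note that the paper does not prove this statement at all — it simply cites it as a known result from Nadler's \emph{Continuum Theory} — so there is no in-paper argument to compare against; your derivation from the coincidence of components and quasi-components in a compactum (the Kuratowski fact the paper also quotes) is exactly the standard textbook proof of this theorem, so nothing further is needed.
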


\begin{theo}[{\bf Torhorst Theorem}]\label{torhorst}
If  $M\subset\bbC$ is a locally connected continuum then every component $R$ of $\bbC\setminus M$ has the following properties:
\\
{\rm(i)} the boundary $\partial R$ is a regular continuum containing no $\theta$-curve,
\\
{\rm(ii)} if $M$ contains no cut point, then $R$ is a disk and therefore $\partial R$ is a simple closed curve,
\\
{\rm(iii)} the closure $\overline{R}$ is a locally connected continuum.
\end{theo}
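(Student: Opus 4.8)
Throughout we work in $\hat{\bbC}$: since $M\subset\bbC$ we have $M\neq\hat{\bbC}$, and we read $R$ as a component of $\hat{\bbC}\setminus M$ and all closures as taken in $\hat{\bbC}$. The plan is to reduce everything to one substantive fact, namely that $\partial R$ is locally connected, after recording two structural observations, and then to obtain (i) and (ii) as comparatively soft consequences. First, $R$ is simply connected: since $\partial R'\subset\partial M\subset M$ for every component $R'$ of $\hat{\bbC}\setminus M$, the set $\hat{\bbC}\setminus R=M\cup\bigcup_{R'\neq R}\overline{R'}$ is a union of connected sets each meeting the connected set $M$, hence is connected, and a proper subdomain of $S^{2}$ with connected complement is simply connected; so $R$ is a topological disk and by the Riemann mapping theorem there is a conformal homeomorphism $\varphi\colon\bbD\to R$. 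Second, $\partial R$ is a continuum: applying the unicoherence of $S^{2}$ to the closed connected sets $\overline R$ and $\hat{\bbC}\setminus R$, whose intersection is $\partial R$, shows that $\partial R$ is connected, hence a subcontinuum of $M$.

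I would then prove (iii) by first establishing that $\partial R$ is locally connected, which is the heart of the matter. Since a continuum that is connected im kleinen at each of its points is locally connected, it suffices to show that for every $p\in\partial R$ and every $\varepsilon>0$ the component of $p$ in $\overline{B(p,\varepsilon)}\cap\partial R$ is a neighbourhood of $p$ in $\partial R$; equivalently, every $x\in\partial R$ close enough to $p$ must be joined to $p$ by a connected subset of $\partial R$ of small diameter. Using local connectedness of $M$ one first joins $p$ and $x$ by a subcontinuum $N\subset M$ of small diameter, and the real work is to replace $N$ by a subcontinuum of $\partial R$ of controlled diameter joining $p$ and $x$: one follows $N$ and, at the places where it leaves $\partial R$, reroutes it along $\partial R$, using that $\partial R$ bounds the domain $R$, that the diameters of the complementary components of $M$ tend to zero (the Sch\"onflies theorem recalled in the Introduction), and the Cut Wire Theorem (Theorem~\ref{cut-wire}) together with Lemma~\ref{brickwall} to keep the modified set connected and of controlled size. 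This surgery is the classical content of Torhorst's theorem and is the one genuine obstacle in the whole argument. Granting that $\partial R$ is locally connected, Carath\'eodory's theorem extends $\varphi$ to a continuous surjection $\varphi\colon\overline{\bbD}\to\overline R$ with $\varphi(\partial\bbD)=\partial R$, so $\overline R=\varphi(\overline{\bbD})$ is a continuous image of $[0,1]$, hence a Peano continuum by the Hahn--Mazurkiewicz theorem; this is (iii).

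Given (iii), the rest is comparatively soft. The set $\partial R$ is nowhere dense in $S^{2}$, hence one-dimensional, and it is a Peano continuum by (iii); it is regular at each $p\in\partial R$ because a connected neighbourhood of $p$ in $M$ of small diameter meets $R$ in only finitely many components (local connectedness of $M$ together with the vanishing of the diameters of the complementary components bounds the number of ``large'' local complementary pieces), and transporting this through $\varphi$ produces arbitrarily small neighbourhoods of $p$ in $\partial R$ with finite boundary. Also $\partial R$ contains no $\theta$-curve: a $\theta$-curve $T\subset\partial R$ would make $S^{2}\setminus T$ have exactly three components, one of which contains the connected set $R$, and then $\overline R\supset\partial R\supset T$ would lie in the closure of that component, which meets one of the three arcs of $T$ in only its two endpoints --- absurd. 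This proves (i). For (ii), suppose $M$ has no cut point; then neither does $M^{*}:=\hat{\bbC}\setminus R$, for $M^{*}$ is $M$ with the remaining complementary components filled in and each $\overline{R'}\setminus\{q\}$ stays connected and meets $M\setminus\{q\}$. If $\varphi|_{\partial\bbD}$ were not injective, say $\varphi(a_{1})=\varphi(a_{2})=q$ with $a_{1}\neq a_{2}$, then a crosscut of $R$ obtained from the chord $[a_{1},a_{2}]$, whose closure is a simple closed curve through $q$, would separate $S^{2}$ into two disks each containing points of $\partial R$, exhibiting $M^{*}\setminus\{q\}$ as a union of two nonempty relatively clopen sets and contradicting the connectedness just proved. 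Hence $\varphi|_{\partial\bbD}$ is injective, $\partial R$ is a simple closed curve, and by the Jordan--Sch\"onflies theorem $R$ is a disk. In summary, Carath\'eodory's extension theorem, the Hahn--Mazurkiewicz and Jordan--Sch\"onflies theorems, and the elementary separation arguments just sketched reduce the whole statement to the single topological surgery of the second paragraph, which is where I expect the main difficulty to lie.
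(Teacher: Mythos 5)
You have not actually proved the theorem: the paper itself only recalls this statement as a classical result with a reference to Kuratowski, and your text, judged as a self-contained proof, defers precisely the substantive content. The entire argument is organized as a reduction to the claim that $\partial R$ is locally connected, and at that point you say the "real work" is a surgery replacing a small continuum $N\subset M$ joining $p$ and $x$ by a small subcontinuum of $\partial R$, admitting it is "the one genuine obstacle". That obstacle is the theorem. As written, the rerouting sketch gives no mechanism for diameter control or for staying inside $\partial R$: $N$ may run through parts of $M$ far from $\partial R$ or along boundaries of other complementary components, and invoking the Sch\"onflies diameter estimate, Theorem~\ref{cut-wire} and Lemma~\ref{brickwall} in one sentence does not produce the required connected small set in $\partial R$. (A workable route, closer to the classical ones, is to show that the filled set $\hat{\bbC}\setminus R$ is locally connected --- using local connectedness of $M$ together with the fact that only finitely many complementary components of $M$ have diameter exceeding any $\varepsilon$ --- and then apply the Carath\'eodory continuity theorem; or to use Whyburn's cyclic element theory. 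Either way the step you skipped is where all the work lies.)

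There is a second genuine gap in (i): the regularity of $\partial R$ is only gestured at. The phrase "a connected neighbourhood of $p$ in $M$ of small diameter meets $R$ in only finitely many components" is not meaningful as stated, since subsets of $M$ are disjoint from $R$, and "transporting this through $\varphi$" is not an argument: $\varphi$ is generally non-injective on $\partial\bbD$, its boundary fibers can be complicated, and images of neighbourhoods with finite boundary need not have finite boundary. Regularity (like the absence of $\theta$-curves) is part of the substance of Torhorst's theorem and needs a real proof. By contrast, the soft portions of your write-up are essentially correct: simple connectivity of $R$, connectedness of $\partial R$ via unicoherence, the $\theta$-curve exclusion, and the cut-point/crosscut argument for (ii) --- though note that (ii) as you argue it also rests on the Carath\'eodory extension and hence on the unproven local connectedness of $\partial R$.
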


Then we obtain a lemma, that will be used in Remark \ref{separation-annulus} and Theorem \ref{quotient-con}.

\begin{lemm}[]\label{separating-gamma}
Given two components $P\ne Q$ of a compactum $X\subset\bbC$. If $Q$ lies in the unbounded component of \ $\bbC\setminus P$  there exists a simple closed curve $J\subset(\bbC\setminus X)$ such that $P$ is contained in the interior of $J$  and $Q$ in the exterior.
%Moreover, one may choose $J$ in an arbitrarily small neighborhood of $P$.
\end{lemm}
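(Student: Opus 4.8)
The statement to prove is Lemma~\ref{separating-gamma}: given two distinct components $P \neq Q$ of a planar compactum $X$, with $Q$ lying in the unbounded component of $\bbC \setminus P$, there is a simple closed curve $J \subset \bbC \setminus X$ with $P$ in its interior and $Q$ in its exterior. The natural strategy is to separate $P$ from $Q$ first \emph{inside $X$} using the Cut Wire Theorem, then convert that separation into a topological (simple-closed-curve) separation of the whole plane. The first step is clean: since $P$ and $Q$ are distinct components of the compactum $X$, no connected subset of $X$ meets both (a connected subset meeting both would force $P = Q$), so Theorem~\ref{cut-wire} gives a partition $X = X_1 \cup X_2$ into disjoint closed sets with $P \subset X_1$ and $Q \subset X_2$. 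Because $X_1, X_2$ are disjoint compact sets in $\bbC$, they lie at positive distance $\delta > 0$ apart.

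**From a clopen separation of $X$ to a curve in the plane.** The core of the argument is to produce a simple closed curve avoiding $X$ that encloses $X_1$ but not $X_2$. The plan is to thicken: let $V$ be the open $\delta/3$-neighbourhood of $X_1$, so $\overline{V}$ is disjoint from $X_2$ and $X \cap \partial V = \emptyset$. Now $\overline{V}$ is a compact set whose boundary misses $X$; I want to replace $\overline{V}$ by something with a nice (polygonal or smooth) boundary. Cover $X_1$ by finitely many open squares of diameter $< \delta/3$ centred at points of $X_1$; the union $W$ of their closures is a compact set with polygonal boundary, contains $X_1$, is contained in $V$, and hence is disjoint from $X_2$ with $\partial W \cap X = \emptyset$. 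Take the component $W_0$ of $W$ containing $P$; its boundary $\partial W_0$ is a finite union of disjoint simple closed polygonal curves, each disjoint from $X$. Exactly one of these, say $J$, is the "outermost" one — the one whose interior contains $W_0$ (equivalently, $P$); every other boundary curve of $W_0$ bounds a complementary hole. Then $P$ lies in the interior of $J$. It remains to check $Q$ lies in the \emph{exterior} of $J$: this is where the hypothesis "$Q$ lies in the unbounded component of $\bbC \setminus P$" is essential.

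**Using the unboundedness hypothesis.** If $Q$ were in the interior of $J$, then since $Q$ is connected, disjoint from $J$, and $J$'s interior is an open disc, $Q$ sits inside the Jordan domain bounded by $J$. I would like to derive a contradiction with $Q$ being in the unbounded component of $\bbC \setminus P$. The point is that $P \subset W_0 \subset$ interior of $J$, and the complement of $W_0$ within the disc bounded by $J$ consists of the "holes" — but $Q$ lies outside $W$ entirely (as $Q \subset X_2$ is disjoint from $W$), so $Q$ lies in one of those holes $H$, a bounded component of $\bbC \setminus W_0$ hence of $\bbC \setminus \overline{W_0}$. Since $\partial H \subset \partial W_0 \subset W_0$ and $W_0$ is connected with $P \subset W_0$, the set $\overline{H}$ together with... more carefully: $\bbC \setminus P \supset \bbC \setminus W_0$, and $H$ is a bounded component of $\bbC \setminus W_0$; I claim $H$ lies in a bounded component of $\bbC \setminus P$. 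Indeed $W_0$ is connected and $\overline{W_0} \cup H$ is connected and bounded and contains $P$... hmm, rather: the unbounded component $U_\infty$ of $\bbC \setminus P$ contains the unbounded component of $\bbC \setminus W_0$ (since $P \subset W_0$ implies $\bbC \setminus W_0 \subset \bbC \setminus P$ and unbounded-connected goes to unbounded-connected), and $U_\infty$ is connected; if $Q \subset H$ with $H$ a \emph{bounded} component of $\bbC \setminus W_0$, then to reach $\infty$ from $Q$ within $\bbC \setminus P$ one must cross $W_0 \setminus P$... the clean way is: any path from a point of $H$ to $\infty$ must meet $\partial H \subset W_0$; if that path avoids $P$ it still could pass through $W_0 \setminus P$, so this needs the stronger observation that $H$ is surrounded by $W_0$ which is surrounded by $J$ — actually the simplest correct route is to note $\partial H \subseteq \partial W_0$, and $\partial W_0$ has a component (a simple closed polygonal curve) that separates $H$ from $\infty$; call it $J'$. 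Since $P \subset W_0$ and $W_0$ is connected and disjoint from $J' \subset \partial W_0$... this requires that $P$ is on the non-$H$ side of $J'$, which holds because $P \subset W_0$ and $W_0 \cap J' = \emptyset$ forces $W_0$ into one component of $\bbC\setminus J'$, and that component is the one containing points of $W_0$ near $J'$ from outside $H$. So $J'$ separates $Q$ from $P$ with $Q$ in the bounded side, contradicting that $Q$ is in the unbounded component of $\bbC \setminus P$.

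**Main obstacle.** The delicate point — and the one I would write out most carefully — is the Jordan-curve bookkeeping in the last paragraph: verifying that "$P$ in the interior of $J$" and "$Q$ in the unbounded component of $\bbC \setminus P$" together force "$Q$ in the exterior of $J$", keeping straight which complementary components of the polygonal boundary $\partial W_0$ contain $P$, contain $Q$, and are holes versus the outer face. An alternative, perhaps cleaner, organization avoiding much of this: first apply Moore-type or direct plane topology to reduce to the case where $X_1$ is replaced by its union with all bounded complementary components that miss $X_2$ (a "filled" version), so that $\bbC \setminus X_1$ becomes connected; then a single outermost polygonal curve does the job and the hole-chasing disappears. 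Either way, Theorem~\ref{cut-wire} provides the separation and the Jordan curve theorem for polygons provides the curve; the hypothesis on $Q$ is used precisely to rule out $Q$ being trapped inside.
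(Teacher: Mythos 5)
Your opening step (Theorem~\ref{cut-wire} applied to $A=P$, $B=Q$) matches the paper, but the decisive step --- showing that $Q$ lands in the \emph{exterior} of $J$ --- has a genuine gap, and in fact your construction can fail outright. The hypothesis concerns $\bbC\setminus P$, whereas your curves $J$ and $J'$ lie on the boundary of a thickening $W_0$ of the whole clopen piece $X_1$, over which the Cut Wire Theorem gives you no control: $X_1$ may contain other components of $X$ which, after fattening by squares of diameter $<\delta/3$, merge with the squares around $P$ and surround $Q$. Concretely, let $P$ be a circular arc of radius $1$ with a small gap of width $\epsilon$, let $S$ be a short arc plugging the gap at distance $\epsilon/10$ from both ends of $P$ (so $S$ is another component of $X$), and let $Q$ be the centre. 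Then $Q$ lies in the unbounded component of $\bbC\setminus P$ (an arc does not separate the plane), the separation may well come out as $X_1=P\cup S$, $X_2=\{Q\}$, and $\delta=d(X_1,X_2)\approx 1$, so squares of diameter $<\delta/3$ centred on $X_1$ bridge the gap: $W_0$ becomes an annular region and $Q$ sits in one of its holes, i.e.\ in the interior of your $J$. The contradiction you try to draw at that point --- ``$J'$ separates $Q$ from $P$ with $Q$ on the bounded side, contradicting the hypothesis'' --- is a non sequitur: $J'$ is only known to lie in $\bbC\setminus X$, not in or near $P$, and a curve disjoint from $X$ that encloses $Q$ but not $P$ is perfectly compatible with $Q$ lying in the unbounded component of $\bbC\setminus P$, as the example shows. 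You even flag the worry (``the path could pass through $W_0\setminus P$'') but then dismiss it without an argument; your alternative ``filled $X_1$'' variant does not help either, since $P\cup S$ above has connected complement already.

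What is missing is a locality ingredient, which is exactly where the hypothesis must enter. Since components of a compactum coincide with its quasi-components, the clopen piece $E\supset P$ of the separation can be chosen inside an arbitrarily small neighbourhood of $P$ (shrinking the squares alone is not enough, because $X_1\setminus P$ may accumulate on $P$), and the squares can then be taken small enough that the fattened set misses a fixed continuum joining $Q$ to $\infty$ in $\bbC\setminus P$ (such a set exists and has positive distance from $P$). Only then is $Q$ guaranteed to lie in the unbounded complementary component $W$ of the thickening, so that $J=\partial W$ has $P$ inside and $Q$ outside; this is the point the paper's proof rests on when it asserts $Q\subset W$, and the remark following Lemma~\ref{separating-gamma} records precisely this freedom in choosing $E$. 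A secondary, more routine gap: the boundary of a finite union of arbitrarily placed squares need not be a disjoint union of simple closed polygonal curves (two squares may meet only at a corner); the paper avoids this by using the offset brick tiling, so that $\partial W$ has no cut point and Theorem~\ref{torhorst} shows it is a simple closed curve.
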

\begin{proof}
Fix a separation $X=E\cup F$ with $P\subset E,\ Q\subset F$ and a positive number $r$ less than the distance from $E$ to $F$. Then, cover $\bbC$ with small squares $T_{m,n}=T+mr+nr\left(\frac{1}{2}+{\bf i}\right) (m,n\in\bbZ)$, where  $T=\{t+s{\bf i}: 0\le t,s\le r\}$. Those squares form a tiling of $\bbC$, and no one of them intersects $E$ and $F$ both; moreover, the common part of two squares is either empty, or a vertical segment of length $r$, or a horizontal one of length $\frac{r}{2}$. See Figure \ref{bricks}.
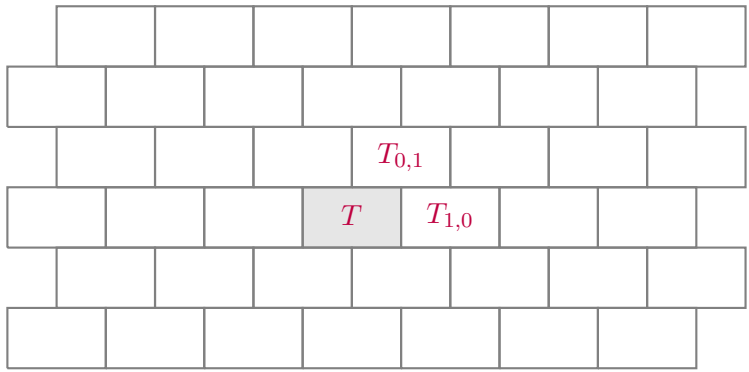
\begin{figure}[ht]
\begin{center}
\begin{tikzpicture}[x=1.618cm,y=1.0cm,scale=0.8]
\fill[gray!20] (3,2) -- (4,2) -- (4,3) -- (3,3) -- (3,2);

\foreach \i in {0,...,6}
\foreach \j in {0,2,4}
{
\draw[gray,thick] (\i,\j) -- (\i+1,\j) -- (\i+1,\j+1) -- (\i,\j+1) -- (\i,\j);
\draw[gray,thick] (\i+0.5,\j+1) -- (\i+1+0.5,\j+1) -- (\i+1+0.5,\j+1+1) -- (\i+0.5,\j+1+1) -- (\i+0.5,\j+1);
}

%\fill[black] (3,2) circle (0.25ex);
%\draw[purple] (3,2) node[below]{$0$};
\draw[purple] (3.5,2.2) node[above]{$T$};  \draw[purple] (4,3.1) node[above]{$T_{0,1}$};
\draw[purple] (4.5,2.1) node[above]{$T_{1,0}$};
%\fill[black] (4,3) circle (0.25ex); \draw[purple] (4,3) node[above]{\footnotesize$r\!\!+\!r{\bf i}$};
\end{tikzpicture}
\end{center}\vskip -0.5cm
\caption{A local patch of the tiling $\{T_{m,n}: m,n\in\bbZ\}$.}\label{bricks}
 \end{figure}
Therefore, if we consider the union of all the squares intersecting $E$ and denote by $A$ the component of this union that contains $P$, we easily see that the unbounded component $W$ of \ $\bbC\setminus A$ contains $Q$ and that the boundary $\partial W$ consists of finitely many segments and has no cut point. By Torhorst Theorem, we can infer that $\partial W$ is actually a simple closed curve. As $\partial W$ is disjoint from $X$, setting $J=\partial W$ completes our proof.
\end{proof}

\begin{rema} There are two things that are noteworthy. Firstly, since components of $X$ coincide with its quasi-components, the separation $X=E\cup F$ in the above proof may be chosen so that $E\supset P$ is contained in an arbitrarily small neighborhood of $P$. If we choose a small enough $r>0$ the resulted curve $J$ lies in a neighborhood of $P$ that is as small as we wish. Secondly, we mention that it is possible to avoid the general version of Torhorst Theorem in the above proof to infer  that $\partial W$ is a simple closed curve. Indeed, the result may be obtained by induction on the number of segments contained in $\partial W$. Taking distance from Torhorst Theorem is not meaningless, since the theory of core decomposition developed in the current paper aims at broader applications in the study of plane topology and dynamical systems. An example of such an application is to extend the Torhorst Theorem to the case of disconnected compacta in the plane.
\end{rema}

%Instead of using Torhorst Theorem, the proof for Lemma \ref{separating-gamma} may refer to  Whyburn's \pst\ \cite[p.120]{Whyburn79}: {\em if $A$ is a continuum and $B$ is a closed connected set in the plane with $A\cap B=T$, a totally disconnected set, and with $A\setminus T$ and $B\setminus T$ connected, then there exists a simple closed curve $J$ in the plane separating $A\setminus T$ and $B\setminus T$, such that $J\cap(A\cup B)\subset T$}.

% Indeed, setting $A$ and $W$ as chosen in the proof for Lemma \ref{separating-gamma}, we can infer that every square intersecting $K\cap W$ may be connected to $Q$ by an arc disjoint from $K$. There are finitely many such squares so we may form a closed connected set $B$, actually a continuum, which is the union of $Q$ with those squares and the arcs that connect such a square to $Q$. After this, the \pst\ will give us the required simple closed curve.

Now we are well prepared to state and prove a lemma which will be  very useful in the characterization of the
Sch\"onflies condition and in later discussions on the newly introduced notion of Peano compactum.

\begin{lemm}\label{separation}
Let $K\subset \bbC$ be a compactum and $U=U(L_1,L_2)$ the region bounded by two parallel lines $L_{1}$ and $L_{2}$. Let $U_i$ be the component of $\bbC\setminus(L_1\cup L_2)$ with $\partial U_i=L_i$. We have the following:
\item[(1)] If $K\cap L_1\neq\emptyset\neq K\cap L_2$ and no connected subset of $K$ intersects both $L_1$ and $L_2$, then there is a separation $K=A_1\cup A_2$ with $A_1$, $A_2$ compact sets such that $(\overline{U_i}\cap K)\subset A_i$.
\item[(2)] If $\overline{U}\setminus K$ has two components $R_1,R_2$ intersecting both $L_{1}$ and $L_{2}$, there exist two arcs $\alpha_i\subset R_i$ for $i=1,2$ such that $B\cap K$  has  a component $P$ intersecting both $L_{1}$ and $L_{2}$, where $B$ is the bounded component of $\overline{U}\setminus(\alpha_1\cup\alpha_2)$.
\item[(3)] If $\overline{U}\cap K$ has two components $Q_1, Q_2$ intersecting both $L_{1}$ and $L_{2}$, then $\overline{U}\setminus K$ has a component $R$, intersecting both $L_{1}$ and $L_{2}$, which contains an arc $\alpha$ separating $Q_1$ and $Q_2$ in $\overline{U}$.
\end{lemm}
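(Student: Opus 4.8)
The three items have increasing difficulty. Item (1) is an immediate consequence of the Cut Wire Theorem (Theorem~\ref{cut-wire}). Items (2) and (3) both rest on the same two ingredients: Lemma~\ref{brickwall}, and the elementary remark that a path in $\overline{U}\setminus K$ joining a point of one component of $\overline{U}\setminus K$ to a point of another forces those two components to coincide; they also use a little standard plane topology (a crosscut splits a strip into two pieces, the Jordan curve theorem, Schoenflies, and the fact that the interior of a Jordan curve contained in a strip stays in that strip, since a strip is convex). I expect item (3) to be the real work, and the polygonal construction inside it to be the main obstacle.

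\textbf{Item (1).} Since the half-planes satisfy $\overline{U_1}\cap\overline{U_2}=\emptyset$, the sets $A:=\overline{U_1}\cap K$ and $B:=\overline{U_2}\cap K$ are disjoint nonempty closed subsets of $K$ (nonempty because $L_i\cap K\neq\emptyset$ and $L_i\subset\overline{U_i}$). If some connected $S\subseteq K$ met both $A$ and $B$, it would contain a point of $\overline{U_1}$ and a point of $\overline{U_2}$, hence, these half-planes being separated by the strip, a point of $L_1$ and a point of $L_2$, contradicting the hypothesis. So no connected subset of $K$ meets both $A$ and $B$, and the Cut Wire Theorem yields $K=A_1\cup A_2$ with $A_1\supseteq A$ and $A_2\supseteq B$ disjoint and closed, hence compact; this is the required separation.

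\textbf{Item (2).} Each $R_i$ is open and connected in the locally path-connected space $\overline{U}$, hence path-connected, so it contains a path from a point of $R_i\cap L_1$ to a point of $R_i\cap L_2$; trimming it at its last visit to $L_1$ and the first subsequent visit to $L_2$, and extracting an arc, we obtain an arc $\alpha_i\subseteq R_i$ meeting $L_1\cup L_2$ only at its two endpoints. As $\alpha_1,\alpha_2$ are disjoint crosscuts of the strip, $\overline{U}\setminus(\alpha_1\cup\alpha_2)$ has a single bounded component $B$, and $\overline{B}$ is the closed Jordan domain bounded by $J:=\alpha_1\cup\sigma_1\cup\alpha_2\cup\sigma_2$, where $\sigma_i$ is the sub-segment of $L_i$ between the two endpoints lying on it; by Schoenflies, $\overline{B}\cong[0,1]^2$ with the arcs $\alpha_1,\sigma_2,\alpha_2,\sigma_1$ as the four edges. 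Since the $\alpha_i$ avoid $K$ we have $\overline{B}\cap K=B\cap K$. Suppose, for contradiction, that no component of $B\cap K$ meets both $L_1$ and $L_2$, i.e.\ no component of the closed set $F:=\overline{B}\cap K$ joins the edge $\sigma_1$ to the edge $\sigma_2$. By the Cut Wire Theorem, $F=F_1\cup F_2$ with $F_1,F_2$ disjoint closed and $F\cap\sigma_i\subseteq F_i$; then $F_1$ avoids $\sigma_2$ and $F_2$ avoids $\sigma_1$. Taking $\sigma_1,\sigma_2$ to be the left and right edges of $[0,1]^2$ and $\alpha_1,\alpha_2$ the bottom and top edges, Lemma~\ref{brickwall} produces a path in $[0,1]^2\setminus(F_1\cup F_2)=\overline{B}\setminus K$ joining $\alpha_1$ to $\alpha_2$. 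This path lies in $\overline{U}\setminus K$ and joins $R_1$ to $R_2$, forcing $R_1=R_2$, a contradiction. Hence some component $P$ of $B\cap K$ meets both $L_1$ and $L_2$.

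\textbf{Item (3).} Put $2\rho=\mathrm{dist}(Q_1,Q_2)>0$. Since $Q_1$ is a component, hence a quasi-component, of the compactum $\overline{U}\cap K$, there is a relatively clopen set $E$ with $Q_1\subseteq E\subseteq(\overline{U}\cap K)$ and $E$ contained in the open $\rho$-neighbourhood of $Q_1$; set $F=(\overline{U}\cap K)\setminus E$, so $Q_2\subseteq F$ and $d:=\mathrm{dist}(E,F)>0$. Fix a square grid of mesh less than $\tfrac12\min(d,\rho)$, in general position so that $\overline{U}\cap K$ contains no grid vertex, let $P$ be the union of the closed tiles meeting $E$, and let $P^{*}$ be the component of $P$ containing $Q_1$. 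Then $P^{*}\cap F=\emptyset$ (no tile meets both $E$ and $F$), so $P^{*}\cap Q_2=\emptyset$, while $Q_1$ lies in the interior of $P^{*}$; moreover the polygonal boundary $\partial P^{*}$ misses $\overline{U}\cap K$ (a boundary point lies in a tile meeting $E$ and a tile not meeting $E$, so it cannot lie in $E$, and it cannot lie in $F$ since $F$ lies in the interiors of the tiles it meets). Because $Q_1$ is a continuum in $\overline{U}$ joining $L_1$ to $L_2$ and sitting in the interior of the polygonal neighbourhood $P^{*}$, the portion of $\partial P^{*}$ in the open strip contains a sub-arc $\alpha$ joining $L_1$ to $L_2$ and lying on the $Q_2$-side of $Q_1$; this $\alpha$ is a crosscut of $\overline{U}$ contained in $\overline{U}\setminus K$ that separates $Q_1$ from $Q_2$ in $\overline{U}$, and $\alpha$ lies in a single component $R$ of $\overline{U}\setminus K$, which therefore meets both $L_1$ and $L_2$. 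The delicate point — the step I expect to cost the most — is the precise extraction of the correct sub-arc $\alpha$ and the verification that it separates $Q_1$ from $Q_2$: one must see that $P^{*}\cap\overline{U}$ is a polygonal region containing $Q_1$ with two ``lateral'' boundary arcs joining $L_1$ to $L_2$, one on each side of $Q_1$, show that the one facing $Q_2$ does the separating, and dispose of the grid-vertex bookkeeping via the general-position assumption.
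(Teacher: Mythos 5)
Your items (1) and (2) are correct and follow essentially the paper's own route: (1) is the Cut Wire Theorem applied to $\overline{U_1}\cap K$ and $\overline{U_2}\cap K$, and (2) is the same crosscut-plus-Jordan-domain construction, with Theorem~\ref{cut-wire} and Lemma~\ref{brickwall} producing a path in $\overline{U}\setminus K$ from $\alpha_1$ to $\alpha_2$ and hence the contradiction $R_1=R_2$.

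Item (3), however, has a genuine gap, and it sits exactly where you flag it. After building the tile neighbourhood $P^{*}$ of the clopen set $E\supseteq Q_1$, everything you still need --- that the part of $\partial P^{*}$ in the strip contains a sub-arc $\alpha$ joining $L_1$ to $L_2$, that this arc lies ``on the $Q_2$-side'', and that it separates $Q_1$ from $Q_2$ in $\overline{U}$ --- is asserted, not proved, and the intermediate picture you invoke (``$P^{*}\cap\overline{U}$ is a polygonal region containing $Q_1$ with two lateral boundary arcs, one on each side of $Q_1$'') is not automatic: $P^{*}$ is only a component of a union of small squares, so it may have holes, its tiles may protrude beyond $L_1$ and $L_2$ so that $P^{*}\cap\overline{U}$ need not be connected (two adjacent tiles can each meet the strip while their common edge does not), with a plain square grid $\partial P^{*}$ can fail to be a $1$-manifold at vertices where four tiles meet (the paper's Lemma~\ref{separating-gamma} uses an offset brick tiling precisely to avoid this), and nothing in your mesh choice identifies which boundary arc ``faces'' $Q_2$; even your side remark that $\partial P^{*}$ misses $F$ ``since $F$ lies in the interiors of the tiles it meets'' is false as stated (points of $F$ may lie on grid edges), though the distance argument repairs that detail. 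So the decisive separation statement is left open, and it is the heart of the lemma. For comparison, the paper avoids this polygonal analysis altogether: it encloses $\overline{U}\cap K$ in a rectangle bounded by $L_1,L_2$ and two perpendicular lines missing $K$, orders the minimal arcs $I_i\subset L_1$, $J_i\subset L_2$ spanned by $Q_i\cap L_1$, $Q_i\cap L_2$, chooses a separation of $\overline{U}\cap K$ compatible with the two lateral edges, and applies Lemma~\ref{brickwall} in the rectangle to get an arc of $\overline{U}\setminus K$ running from a point of $L_1$ between $I_1$ and $I_2$ to a point of $L_2$ between $J_1$ and $J_2$; the separation of $Q_1$ from $Q_2$ in $\overline{U}$ then falls out of the crosscut position of that arc. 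Either you carry out your deferred step in full (e.g.\ pass to the boundary of the complementary component of $P^{*}$ containing $Q_2$, show it is a simple closed curve, and extract and justify the crosscut), or you should switch to a reduction to $[0,1]^2$ of this kind, where Lemma~\ref{brickwall} does the separating for you.
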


\begin{rema}\label{separation-annulus}
The results of Lemma \ref{separation} still hold if we change $\bbC$ into $\hat{\bbC}$ and replace $L_1, L_2$ by two disjoint simple closed curves $J_1, J_2\subset\hat{\bbC}$. Then  the region $U(J_1,J_2)$ bounded by $J_1\cup J_2$ is an annulus. For Item (1), the argument is exactly the same. For Item (2), we just find an open arc $\alpha$ in $U(J_1,J_2)$ that connects a point on $J_1$ to a point on $J_2$. Clearly, the difference $\overline{U(J_1,J_2)}\setminus \overline{\alpha}$ is topologically homeomorphic with the unbounded strip bounded by two parallel lines. For Item (3), an application of Lemma \ref{separating-gamma} to $X=\overline{U}\cap K$ gives rise to such an open arc $\alpha$. Therefore, in the rest of this paper, we may use the results of Lemma \ref{separation} in each of the following two cases: (1) when $U\subset\bbC$ is an unbounded strip between two parallel lines;  (2) when $U\subset\hat{\bbC}$ is a topological annulus bounded by two disjoint simple closed curves.

%\begin{figure}[ht]  \begin{center} \begin{tikzpicture}[scale=0.4]
%\draw[purple, very thick] (0:0) -- (0:3);  \draw[purple] (0:1.5) node[below] {$r\!=\!1$}; \draw[purple, very thick] (270:6.1) -- (315:6.1*1.41);  \draw[purple] (305:7.4) node[above] {$R\!=\!2$}; \foreach \r in {3,3.5,...,6.0}        \draw[gray,very thick] (0,0) circle (\r);  \foreach \r in {3,4,5}    \foreach \b in {5,15,...,355} \draw[gray,thick] (\b:\r) -- (\b:0.5+\r); \foreach \r in {3.5,4.5,5.5}    \foreach \b in {0,10,...,360} \draw[gray,thick] (\b:\r) -- (\b:0.5+\r); \foreach \r in {3,6.0}        \draw[purple,very thick] (0,0) circle (\r); \end{tikzpicture}\end{center} \caption{A Polar Brick Wall Tiling, in which every two tiles either are disjoint or intersect at a non-degenerate arc on the boundary.} \end{figure}
\end{rema}

\begin{proof}[{\bf Proof for Lemma \ref{separation}}]

Item (1) follows  immediately from Theorem  \ref{cut-wire} (\cwt), while   the proof of Item (2) reads as follows.

%%
% We first prove Item (1).  Every component of the compact set $\overline{U}\cap K$ is also a quasi-component. Therefore, for any $a\in(L_1\cap K)$ and any $b\in(L_2\cap K)$, there exists a separation $\overline{U}\cap K=W_{a,b}\cup V_{a,b}$ with $a\in W_{a,b}$ and $b\in V_{a,b}$ such that the sets $W_{a,b}, V_{a,b}$ are closed in $\mathbb{C}$ and relatively open in $\overline{U}\cap K$. In particular, the collection $\{V_{a,b}: b\in (L_2\cap K)\}$ is an open cover of the compact set $L_2\cap K$ in $\overline{U}\cap K$, so there exists a finite subcover $V_{a,b_1},..., V_{a,b_k}$. Let \[W_a=\bigcap_{i=1}^kW_{a,b_i}\quad\text{and}\quad V_a=\bigcup_{i=1}^kV_{a,b_i}.\] Then $\overline{U}\cap K=W_a\cup V_a$ is a separation such that $a\in W_a$ and $(L_2\cap K)\subset V_a$. By flexibility of $a\in L_1\cap K$, the collection $\{W_a: a\in (L_1\cap K)\}$ is an open cover of $L_1\cap K$ in $\overline{U}\cap K$ and has a finite subcover $W_{a_1},\ldots, W_{a_l}$.  Let \[W=\bigcup_{i=1}^lW_{a_i}\quad\text{and}\quad V=\bigcap_{i=1}^lV_{a_i}.\]  Then $\overline{U}\cap K=W\cup V$ is a separation such that $(L_1\cap K)\subset W$ and $(L_2\cap K)\subset V$. If we set $A_1=W\cup(U_1\cap K)$ and $A_2=V\cup(U_2\cap K)$, then $K=A_1\cup A_2$ is a separation with $(\overline{U_1}\cap K)\subset A_1$ and $(\overline{U_2}\cap K)\subset A_2$. By construction, $A_1$ and $A_2$ are closed subsets of $\mathbb{C}$. This proves Item (1).

For $i=1,2$ choose an arc $\alpha_{i}\subset R_{i}$ with endpoints $a_{i}\in L_1, b_{i}\in L_2$, such that $\alpha_i\setminus\{a_i,b_i\}\subset U$. Let $\beta$ be the arc on $L_1$ with endpoints $a_1, a_2$ and $\gamma$ the arc on $L_2$ with endpoints $b_1, b_2$. Then $\Gamma=\alpha_1\cup\beta\cup\alpha_2\cup \gamma$ is a simple closed curve. Let $W$ be the bounded component of $\bbC\setminus\Gamma$. Then $\overline{W}$ is a topological disk, which may be represented as a rectangle. See the following Figure \ref{brickwall-2} for relative locations of $\alpha_1,\alpha_2,\beta$, and $\gamma$.
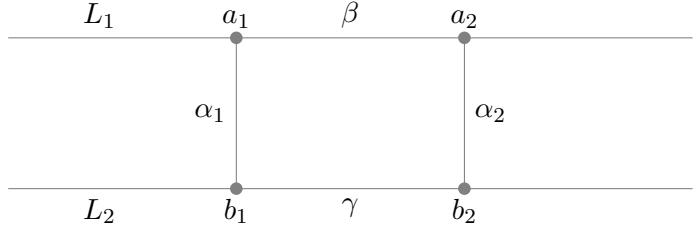
\begin{figure}[ht]
\begin{center}
\begin{tikzpicture}[scale=1]
\foreach \i in {0,1}
{
\draw[gray] (3*\i,0) -- (3*\i,2);
\draw[gray] (-3,2*\i) -- (6,2*\i);
\fill[gray] (3*\i,0) circle (0.5ex);
\fill[gray] (3*\i,2) circle (0.5ex);
}

\draw[] (-1.8,0) node[below]{$L_2$};
\draw[] (-1.8,2) node[above]{$L_1$};

\draw[] (0,0) node[below]{$b_1$};
\draw[] (0,2) node[above]{$a_1$};

\draw[] (3,0) node[below]{$b_2$};
\draw[] (3,2) node[above]{$a_2$};

\draw[] (1.5,0) node[below]{$\gamma$};
\draw[] (1.5,2) node[above]{$\beta$};

\draw[] (0,1) node[left]{$\alpha_1$};
\draw[] (3,1) node[right]{$\alpha_2$};

\end{tikzpicture}
\vskip -0.25cm
\caption{ The topological disk $\overline{W}$ is represented as a rectangle.}\label{brickwall-2}
\end{center}
\vskip -0.25cm
\end{figure}
Let $K^*=\overline{W}\cap K$. Then every component of $K^*$ is also a component of $\overline{U}\cap K$. And we only need to show that $K^*$ has a component intersecting both $L_1$ and $L_2$. This is guaranteed by Item (1).  Indeed, if on the contrary $K^*$ does not have such a component, we may use Item (1) to infer that $K^*$ has a separation $K^*=A\cup B$ with $A\cap\gamma=B\cap\beta=\emptyset$. Then, using Lemma \ref{brickwall} we may choose an arc $\alpha\subset\overline{W}$ that is disjoint from $K^*$ and connects a point on $\alpha_1$ to a point on $\alpha_2$. Therefore, $\alpha_1\cup\alpha\cup\alpha_2$ is a continuum lying in $\overline{U}\setminus K$. This is impossible, since the arcs $\alpha_1$ and $\alpha_2$ are  respectively contained in two components $R_1\ne R_2$ of $\overline{U}\setminus K$. This completes the proof of Item (2).

We now   prove Item (3).

To this end, we fix two lines $L', L''$, that are disjoint from $K$ and perpendicular to $L_1$ with  $L'\cap(L_1\cup L_2)=\{a_1', b_1'\}$ and  $L''\cap(L_1\cup L_2)=\{a_2', b_2'\}$, such that the rectangle $W$ with vertices $a_1', a_2',  b_2', b_1'$ contains $\overline{U}\cap K$. See Figure \ref{rectangle-W}.
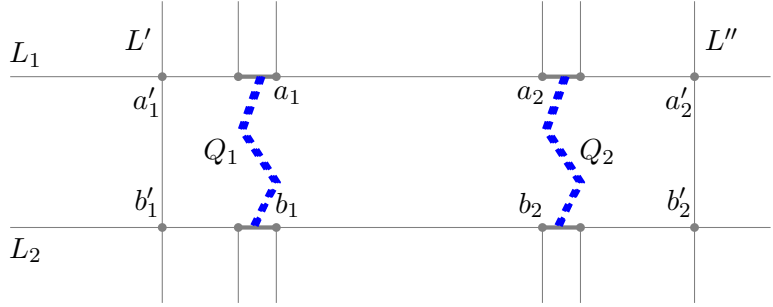
\begin{figure}[ht]
\begin{center}
\begin{tikzpicture}[scale=1.0]
\foreach \i in {0,1}
{
\draw[gray] (-1,2*\i)-- (9,2*\i);
\draw[gray] (1+7*\i,-1)-- (1+7*\i,3);
}

\foreach \i in {0,1}
{
\draw[gray] (2+4*\i,2) -- (2+4*\i,3);
\draw[gray] (2.5+4*\i,2) -- (2.5+4*\i,3);
\draw[gray] (2+4*\i,-1) -- (2+4*\i,0);
\draw[gray] (2.5+4*\i,-1) -- (2.5+4*\i,0);
\draw[gray,ultra thick] (2+4*\i,2) -- (2.5+4*\i,2);
\draw[gray,ultra thick] (2+4*\i,0) -- (2.5+4*\i,0);
\draw[blue,dashed,ultra thick] (2.27+4*\i,2) -- (2.27+4*\i-0.25,2-0.7) -- (2.27+4*\i+0.2, 2-1.4) -- (2.17+4*\i,0);
\draw[blue,dashed,ultra thick] (2.33+4*\i,2) -- (2.33+4*\i-0.25,2-0.7) -- (2.33+4*\i+0.2, 2-1.4) -- (2.2+4*\i,0);
\draw[blue,dashed,ultra thick] (2.30+4*\i,2) -- (2.30+4*\i-0.25,2-0.7) -- (2.30+4*\i+0.2, 2-1.4) -- (2.23+4*\i,0);
\fill[gray] (2+4*\i,2) circle (0.35ex);
\fill[gray] (2+4*\i,0) circle (0.35ex);
\fill[gray] (2.5+4*\i,2) circle (0.35ex);
\fill[gray] (2.5+4*\i,0) circle (0.35ex);
\fill[gray] (1+7*\i,2) circle (0.35ex);
\fill[gray] (1+7*\i,0) circle (0.35ex);
}

\draw[] (-0.8,0) node[below]{$L_2$};
\draw[] (-0.8,2) node[above]{$L_1$};
\draw[] (1,2.5) node[left]{$L'$};
\draw[] (8,2.5) node[right]{$L''$};

\draw[] (2.15,1.0) node[left]{$Q_1$};
\draw[] (6.35,1.0) node[right]{$Q_2$};

\draw[] (2.65,2) node[below]{$a_1$};
\draw[] (5.85,2) node[below]{$a_2$};
\draw[] (2.65,0) node[above]{$b_1$};
\draw[] (5.85,0) node[above]{$b_2$};

\draw[] (0.8,2) node[below]{$a_1'$};
\draw[] (7.8,2) node[below]{$a_2'$};
\draw[] (0.8,0) node[above]{$b_1'$};
\draw[] (7.8,0) node[above]{$b_2'$};
\end{tikzpicture}
\vskip -0.25cm
\caption{Relative locations of  $W$ and the components  $Q_1,Q_2$ of $\overline{U}\cap K$.}\label{rectangle-W}
\end{center}
\end{figure}
For $i=1,2$ let $I_i\subset L_1$ be the minimal arc that contains $L_1\cap Q_i$ and $J_i\subset L_2$ the minimal arc that contains $L_2\cap Q_i$. Then it is direct to check that $I_1\cap I_2=J_1\cap J_2=\emptyset$. We may assume with no loss of generality that $I_1$ separates $a_1'$ from $I_2$ in $L_1$, which then implies that  $J_1$ separates $b_1'$ from $J_2$ in $L_2$. See Figure \ref{rectangle-W} for relative locations of the arcs $I_1,I_2\subset L_1$ and $J_1, J_2\subset L_2$. Here we mark the right ends of $I_1, J_1$ respectively as $a_1$ and $b_1$, while the left ends of $I_2, J_2$ are marked as $a_2, b_2$ respectively.
Now, find a homeomorphism $h$ sending $\overline{W}$ onto $[0,1]^2$ such that $h(a_1)=(0,1), h(b_1)=(0,0), h(a_2)=(1,1)$, and $h(b_2)=(1,0)$. Then we may find a separation $h(K)=A\cup B$, with $h(Q_1)\subset A$ and $h(Q_2)\subset B$, such that $B\cap \{0\}\times[0,1]=A\cap \{1\}\times[0,1]=\emptyset$. Applying Lemma \ref{brickwall}, we may obtain an arc $\alpha_0\subset[0,1]^2$, that is disjoint from $A\cup B$ and connects a point on $[0,1]\times\{0\}$ to a point on $[0,1]\times\{1\}$. Then it is clear that $\alpha=h^{-1}(\alpha_0)$ is an arc in $\overline{W}\setminus K$ that connects a point on $L_1$ between $a_1,a_2$ to a point on $L_2$ between $b_1,b_2$. Therefore, the component of $\overline{U}\setminus K$ that contains $\alpha$ is what we want to find.
\end{proof}

The following lemma is a direct corollary of Lemma \ref{separation}.
\begin{lemm}\label{key-connection}
Let $K\subset \bbC$ be a compactum and $U=U(L_1,L_2)$ the region bounded by two parallel lines $L_{1}$ and $L_{2}$. We have the following:
\item[(1)] If $\overline{U}\setminus K$ has $m>2$ components intersecting both $L_{1}$ and $L_{2}$, then $\overline{U}\cap K$ has  at least $m-1$ components intersecting both $L_{1}$ and $L_{2}$.
\item[(2)] If $\overline{U}\cap K$ has $m>2$ components $Q_1,\ldots,Q_m$ intersecting both $L_{1}$ and $L_{2}$, then $\overline{U}\setminus K$ has at least $m-1$ components intersecting both $L_{1}$ and $L_{2}$, each of which separates at least two of the components $Q_1,\ldots,Q_m$.
\item[(3)] The {\em difference} $\overline{U}\setminus K$ has infinitely many components intersecting both $L_1$ and $L_2$ if and only if the intersection $\overline{U}\cap K$ has infinitely many components intersecting both $L_1$ and $L_2$.
\end{lemm}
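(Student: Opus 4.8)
The plan is to deduce all three items from the simple combinatorial fact that the spanning continua of $\overline U$ and the spanning components of its complement \emph{interleave} along the strip; here I call a connected subset of $\overline U$ \emph{spanning} if it meets both $L_1$ and $L_2$. The two elementary facts I will use about $\overline U$ are: (a) compactifying $\overline U$ by one point at each of its two ends turns it into a closed disk, so a spanning arc with interior in $U$ is a properly embedded arc splitting $\overline U$ into exactly two pieces, and pairwise disjoint spanning arcs are linearly ordered across $\overline U$; and (b) a spanning \emph{continuum} separates the two ends of $\overline U$, so the spanning components of $\overline U\cap K$ carry a canonical linear order. Item (3) will then follow formally from (1) and (2), since infinitely many spanning components means at least $m$ of them for every $m$.

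For item (1): let $R_1,\dots,R_m$ be distinct spanning components of $\overline U\setminus K$. In each $R_i$ (an open, hence locally path-connected, subset of $\overline U$ meeting both lines) choose an arc $\alpha_i$ joining $L_1$ to $L_2$ with $\alpha_i\cap\partial U$ equal to its two endpoints. These arcs are pairwise disjoint, so after relabelling they occur in the order $\alpha_1,\dots,\alpha_m$ across $\overline U$; let $B_j$ be the bounded component of $\overline U\setminus(\alpha_j\cup\alpha_{j+1})$. Since $\alpha_j$ and $\alpha_{j+1}$ lie in distinct components of $\overline U\setminus K$, the argument proving Lemma~\ref{separation}(2) applies — that proof works for \emph{any} spanning arcs sitting in two distinct complementary components, not merely the ones it happens to produce — and yields a component $C_j$ of $\overline{B_j}\cap K$ meeting both $L_1$ and $L_2$. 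Because the relative boundary of $B_j$ in $\overline U$ lies in $\alpha_j\cup\alpha_{j+1}\subset\overline U\setminus K$, this $C_j$ is in fact a component of $\overline U\cap K$; and since the regions $\overline{B_1},\dots,\overline{B_{m-1}}$ meet only along the $K$-free arcs $\alpha_j$, the $C_j$ are $m-1$ pairwise distinct spanning components of $\overline U\cap K$.

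For item (2): relabel $Q_1,\dots,Q_m$ so that they occur in the linear order of fact (b). Apply Lemma~\ref{separation}(3) to each consecutive pair $(Q_j,Q_{j+1})$: it produces a spanning component $R_j$ of $\overline U\setminus K$ containing an arc $\alpha_j$ that separates $Q_j$ from $Q_{j+1}$ in $\overline U$. Since $Q_j$ and $Q_{j+1}$ each meet $L_1$ and $L_2$, such a separating arc cannot have an endpoint off $\partial U$ nor join $L_1$ to itself, so $\alpha_j$ runs from $L_1$ to $L_2$; then $\overline U\setminus\alpha_j$ has exactly two pieces, one containing $Q_1,\dots,Q_j$ and the other containing $Q_{j+1},\dots,Q_m$ (each $Q_\ell$ is connected and misses the $K$-free arc $\alpha_j$, and the order of fact (b) forces which side it lies on). This pins $\alpha_j$, and with it $R_j$, strictly \emph{between} $Q_j$ and $Q_{j+1}$, forcing $R_1,\dots,R_{m-1}$ to be pairwise distinct: a single component of $\overline U\setminus K$ containing separating arcs for two different consecutive pairs would connect a point between $Q_j,Q_{j+1}$ to a point between $Q_{j'},Q_{j'+1}$ inside $\overline U\setminus K$, contradicting that the spanning continua strictly between them block any such connection. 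As each $R_j$ separates $Q_j$ from $Q_{j+1}$, it separates at least two of $Q_1,\dots,Q_m$, which is item (2); and item (3) follows, as noted, by combining (1) and (2).

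The main obstacle is the rigorous version of the bookkeeping in item (2): setting up the canonical linear order on the spanning subcontinua of $\overline U$ (fact (b)) and checking that the separating arc delivered by Lemma~\ref{separation}(3) respects it, so that the components $R_j$ genuinely come out pairwise distinct. This is routine plane topology — ultimately it rests on Jordan-type separation and unicoherence of the strip — but it is the only place where some care is needed; item (1) is comparatively painless because there we choose the separating arcs ourselves and they are automatically disjoint and linearly ordered.
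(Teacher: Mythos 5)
Your proof is correct and follows the paper's intended route: the paper states this lemma without proof, as a direct corollary of Lemma~\ref{separation}, and your argument is exactly that deduction --- item (1) from the proof of Lemma~\ref{separation}(2) applied to consecutively ordered spanning arcs chosen one per complementary component, item (2) from Lemma~\ref{separation}(3) applied to consecutive spanning components of $\overline{U}\cap K$, and item (3) formally from (1) and (2). The ordering/interleaving bookkeeping you supply (and correctly flag as the only place needing care) is precisely the routine content the paper leaves implicit.
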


\begin{rema}
Item (3) of the above lemma indicates that a compactum $K\subset\bbC$ satisfies the Sch\"onflies condition if and only if for any unbounded region $U=U(L_1,L_2)$ bounded by two parallel lines $L_1, L_2$ the intersection $\overline{U}\cap K$ has at most finitely many components intersecting both $L_1$ and $L_2$. This criterion is sometimes more helpful than the definition of Sch\"onflies condition, when the latter does not fit well into the setting. On the other hand, let $R_K$ be the Sch\"onflies relation on a compactum $K\subset\bbC$. Then Item (3) of Lemma \ref{key-connection} also implies that $x\ne y\in K$ are related under $R_K$ if and only if\, for any disjoint simple closed curves $J_1,J_2$ and the region $U=U(J_1,J_2)$ with $\partial U=J_1\cup J_2$, the difference $\overline{U}\setminus K$ has infinitely many components $\{Q_n\}$, intersecting $J_1$ and $J_2$ both, such that the limit $\lim\limits_{n\rightarrow\infty}Q_n$ under Hausdorff distance contains $\{x,y\}$. Such a criterion for $R_K$ includes a hint that the origin of $R_K$ is more or less related to the ancient result by Sch\"onflies~\cite[p.515, $\S61$, II, Theorem 10]{Kuratowski68}, on locally connected compacta in the plane.
\end{rema}

\section{Proofs for Theorems \ref{necessary} to \ref{sufficient}} \label{sec:necsuff}

Firstly, we copy the ideas of Sch\"onflies result \cite[p.515, $\S61$, II, Theorem 10]{Kuratowski68} and obtain a necessary condition for a planar compactum to be finitely Suslinian.

\begin{theo}\label{FS-necessary}
Given a finitely Suslinian compactum $K\subset\bbC$. If the sequence $R_1,R_2,\ldots$ of components of $\bbC\setminus K$ is infinite then the sequence of their diameters converges to zero.
\end{theo}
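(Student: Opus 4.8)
The plan is to argue by contradiction, in the spirit of Sch\"onflies' original argument but using Lemma~\ref{key-connection} to pass from complementary regions crossing a strip to subcontinua of $K$ crossing that strip. Suppose the conclusion fails. At most one of the components $R_1,R_2,\dots$ is unbounded, so after discarding it we may fix $\epsilon_0>0$ and pass to an infinite subsequence of pairwise distinct \emph{bounded} components, still written $R_1,R_2,\dots$, with $\diam R_n>\epsilon_0$ for every $n$. For each $n$ pick $a_n,b_n\in R_n$ with $|a_n-b_n|>\tfrac34\epsilon_0$. All these points lie in a fixed closed ball containing $K$ (any bounded component of $\bbC\setminus K$ is contained in such a ball, since the complement of the ball lies in the unbounded component of $\bbC\setminus K$), so after a further subsequence we may assume $a_n\to a$ and $b_n\to b$, where $|a-b|\ge\tfrac34\epsilon_0>0$.

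Next I would build a fixed strip adapted to $a$ and $b$. Set $\delta=\tfrac18\epsilon_0$, let $\lambda:\bbC\to\bbR$ be the affine function giving the signed distance to the perpendicular bisector of the segment $\overline{ab}$, with sign chosen so that $\lambda(b)>0$, and put $L_1=\lambda^{-1}(-\delta)$, $L_2=\lambda^{-1}(\delta)$ and $U=U(L_1,L_2)=\lambda^{-1}\big((-\delta,\delta)\big)$, so that $\operatorname{dist}(L_1,L_2)=2\delta$. Since $\lambda(a)=-\tfrac12|a-b|<-3\delta$ and $\lambda(b)>3\delta$, for all large $n$ we have $\lambda(a_n)<-\delta$ and $\lambda(b_n)>\delta$. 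For such $n$, join $a_n$ to $b_n$ by a path $\sigma_n\subset R_n$ (possible, as $R_n$ is an open connected subset of the plane); a routine intermediate-value argument extracts a sub-path $\tau_n\subset\sigma_n$ with one endpoint on $L_1$, the other on $L_2$, and all its remaining points in $U$. Then $\tau_n$ is a connected subset of $\overline U\setminus K$ meeting both $L_1$ and $L_2$. Let $\widetilde R_n$ be the component of $\overline U\setminus K$ containing $\tau_n$; it is connected, avoids $K$, and meets $R_n$, hence $\widetilde R_n\subset R_n$ by maximality of components of $\bbC\setminus K$. Consequently the $\widetilde R_n$ are pairwise disjoint, and $\overline U\setminus K$ has infinitely many components meeting both $L_1$ and $L_2$.

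It then remains to invoke Lemma~\ref{key-connection}(3): the intersection $\overline U\cap K$ also has infinitely many components meeting both $L_1$ and $L_2$. Each of these is a subcontinuum of $K$ (it is a component of the compactum $\overline U\cap K$), they are pairwise disjoint, and each has diameter at least $\operatorname{dist}(L_1,L_2)=2\delta>0$. This contradicts the hypothesis that $K$ is finitely Suslinian, which proves the theorem. I expect the only slightly delicate points to be organisational: one must pass to a subsequence along which the strip direction stabilises --- handled by Bolzano--Weierstrass once the bounded components are confined to a fixed ball --- and one must verify $\widetilde R_n\subset R_n$, which is precisely what makes the resulting subcontinua of $K$ pairwise disjoint. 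Everything else is immediate from Lemma~\ref{key-connection}.
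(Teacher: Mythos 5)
Your proof is correct and follows essentially the same route as the paper's: both arguments fix a strip $U(L_1,L_2)$ crossed by infinitely many of the complementary components and then invoke Lemma~\ref{key-connection}(3) to produce infinitely many pairwise disjoint subcontinua of $\overline U\cap K$ of diameter at least $\operatorname{dist}(L_1,L_2)$, contradicting the finitely Suslinian hypothesis. The only difference is cosmetic: the paper extracts arcs $\alpha_{i_n}\subset R_{i_n}$ converging in Hausdorff distance and builds the strip from the limit arc, whereas you use convergent endpoint pairs $a_n,b_n$ and spell out the sub-path extraction and the inclusion $\widetilde R_n\subset R_n$ that the paper leaves implicit.
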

\begin{proof}
Suppose conversely that there exist $\epsilon>0$ and infinitely many integers $i_{1}<i_{2}<\cdots$ such that the diameter $\delta(R_{i_{n}})>3\epsilon$. For each component $R_{i_{n}}$, choose an arc $\alpha_{i_{n}}\subset R_{i_{n}}$ with diameter larger than $3\epsilon$. We may assume that $\alpha_{i_{n}}$ converges to $\alpha_{0}$ under Hausdorff distance. Here we have $\delta(\alpha_{0})\ge3\epsilon$. Choose two points $p',q'\in \alpha_{0}$ with $|p'-q'|=3\epsilon$. Then, we can fix two points $p_1,p_2$ in the interior of the segment $\overline{pq}$ with  $|p_1-p_2|=2\epsilon$.

Let $L_i$ be the line through $p_i$ which is perpendicular to $\overline{pq}$. Let $U$ be the unbounded strip satisfying $\partial U=L_1\cup L_2$. Since $\lim\limits_{n\rightarrow\infty}\alpha_{i_n}=\alpha_0$,
there exists an integer $N$ such that $\alpha_{i_{n}}$ intersects both $L_{1}$ and $L_{2}$ for all $n>N$. By Item (3) of Lemma \ref{key-connection}, there exist infinitely many components of $\overline{U}\cap K$ which intersect both $L_{1}$ and $L_{2}$. This contradicts the  condition that $K$ is finitely Suslinian.
\end{proof}

Secondly, we prove Theorem \ref{necessary} as follows.
\begin{proof}[{\bf Proof for Theorem \ref{necessary}.}]
The part for locally connected compacta follows from Sch\"onflies' result, see \cite[p.515, $\S61$, II, Theorem 10]{Kuratowski68}. We only consider the part for finitely Suslinian compacta.

Suppose on the contrary that there exist two parallel lines $L_1,L_2$ such that the difference $\overline{U}\setminus K$ has infinitely many components $R_1,R_2,\ldots$  intersecting each of $L_1$ and $L_2$. Here $U$ is the only component of $\mathbb{C}\setminus(L_1\cup L_2)$ with $\partial U=L_1\cup L_2$. By Item (3) of Lemma \ref{key-connection}, $\overline{U}\cap K$ has infinitely many components  intersecting each of $L_1$ and $L_2$. This contradicts the assumption that $K$ is finitely Suslinian.
\end{proof}

Then we continue to prove Theorem \ref{LC-criterion}.
\begin{proof}[{\bf Proof for Theorem \ref{LC-criterion}.}]
We only need to show that each continuum $K\subset\mathbb{C}$ satisfying the Sch\"onflies condition is locally connected. Suppose on the contrary that $K$ is not locally connected at a point $x_0\in K$. By definition of local connectedness \cite[p.227, $\S49$, I, Definition]{Kuratowski68}, there would exist a {\bf closed square} $V$ centered at $x_0$ %with side-length $2r>0$
such that the component $P_0$ of $V\cap K$ containing $x_0$  is not a neighborhood of $x_0$ with respect to the induced topology on $V\cap K$. In other words, there exists a sequence $\left\{x_{n}\right\}_{n=1}^{\infty}$ in $(V\cap K)\setminus P_0$ with $\lim\limits_{n\rightarrow\infty}x_{n}=x_0$ such that the components of $V\cap K$ containing $x_n$, denoted  $P_{n}$, are pairwise disjoint.

Recall that the hyperspace of all closed nonempty subsets of $V$ is a compact metric space under Hausdorff distance. Coming to an appropriate subsequence, if necessary, we may assume that $P_{n}$ converges to $P_{\infty}$ in Hausdorff distance. From this, we see that $P_{\infty}$ is a sub-continuum of $P_0$ and that the diameter of $P_{n}$, denoted $\delta(P_n)$, converges to $\delta(P_{\infty})$.

By connectedness of $K$, each $P_{n}$ must intersect $\partial V$ hence $P_\infty$ intersects $\partial V$. Since $P_n\rightarrow P_\infty$ under Hausdorff distance, we can pick some point $y_0\in(\partial V\cap P_\infty)$ and points $y_{n}\in(\partial V\cap P_{n})$ for all $n\ge1$  such that $y_0:=\lim\limits_{n\rightarrow\infty}y_{n}$.

Since $\partial V$ consists of four segments and contains the infinite set of points $\{y_n\}$,we may fix a line $L_1$ crossing infinitely many $y_n$, which necessarily contains $y_0$. Then, fix a line $L_2$  parallel to $L_1$ which separates $x_0$ from $y_0$, so that $x_0$ and $y_0$ lie in different components of $\mathbb{C}\setminus L_2$. Let $U$ be the strip bounded by $L_{1}$ and $L_{2}$. Obviously, there exists an integer $N$ such that $P_{n}$ intersects both $L_{1}$ and $L_{2}$ for $n\geq N$. Without loss of generality, we may assume that every $P_{n}$ intersects both $L_{1}$ and $L_{2}$.

It follows from \cwt\ that for all $n\ge1$ the intersection  $\overline{U}\cap P_{n}=V\cap \overline{U}\cap P_n$ has a component $Q_{n}$ intersecting both $L_{1}$ and $L_{2}$.
% Otherwise, by Item (1) of Lemma \ref{separation} there is a separation $\overline{U}\cap P_{n}=A_1\cup A_2$, where $A_i$ is the union of all the components of $\overline{U}\cap P_{n}$ intersecting $L_i$. Let $U_{1},U_{2}$ be the two components of $\mathbb{C}\setminus\overline{U}$, with $L_i=\partial U_i$. Then $\displaystyle P_{n}=[A_1\cup(U_1\cap P_n)]\ \cup\ [B_1\cup(U_2\cap P_n)]$  is a separation. This contradicts connectedness of $P_{n}$.
Therefore, by Item (3) of Lemma \ref{key-connection}, our proof will be completed if only we can show that for all but two integers $n\ge1$ the continuum $Q_n$ is also a component of $\overline{U}\cap K$. To this end, for all $n\ge1$ we may fix two points $a_n\in(L_1\cap Q_n)$ and $b_n\in(L_2\cap Q_n)$. Then, fix a line $L$ perpendicular to $L_1$ and disjoint from $K$.
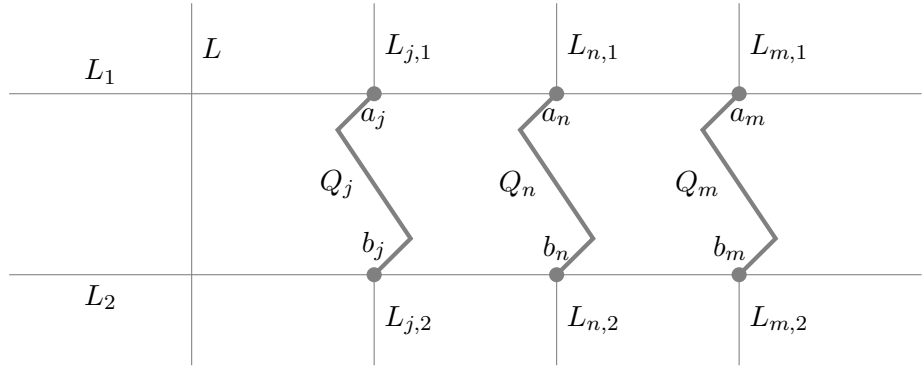
\begin{figure}[ht]
\begin{center}
\begin{tikzpicture}[scale=1.2]
\foreach \i in {0,1}
{
\draw[gray] (-2,2*\i)-- (8,2*\i);
}
\draw[gray] (0,-1)-- (0,3);

\foreach \i in {0,1,2}
{
\draw[gray] (2+2*\i,2) -- (2+2*\i,3);
\draw[gray] (2+2*\i,-1) -- (2+2*\i,0);
\draw[gray,ultra thick] (2+2*\i,2) -- (2+2*\i-0.4,2-0.4) -- (2+2*\i+0.4, 2-1.6) -- (2+2*\i,0);
\fill[gray] (2+2*\i,2) circle (0.5ex);
\fill[gray] (2+2*\i,0) circle (0.5ex);
}

\draw[] (-1,0) node[below]{$L_2$};
\draw[] (-1,2) node[above]{$L_1$};
\draw[] (0,2.5) node[right]{$L$};

\draw[] (2,2.5) node[right]{$L_{j,1}$};
\draw[] (4,2.5) node[right]{$L_{n,1}$};
\draw[] (6,2.5) node[right]{$L_{m,1}$};

\draw[] (2,-0.5) node[right]{$L_{j,2}$};
\draw[] (4,-0.5) node[right]{$L_{n,2}$};
\draw[] (6,-0.5) node[right]{$L_{m,2}$};

\draw[] (1.9,1.0) node[left]{$Q_j$};
\draw[] (3.9,1.0) node[left]{$Q_n$};
\draw[] (5.9,1.0) node[left]{$Q_m$};

\draw[] (2,1.95) node[below]{$a_j$};
\draw[] (4,1.95) node[below]{$a_n$};
\draw[] (6.1,1.95) node[below]{$a_m$};

\draw[] (2,0.05) node[above]{$b_j$};
\draw[] (4,0.05) node[above]{$b_n$};
\draw[] (5.9,0.05) node[above]{$b_m$};
\end{tikzpicture}
\caption{Relative locations of  $a_n, b_n$ and  $Q_j,Q_n,Q_m$.}\label{rays}
\end{center}
\end{figure}
See Figure \ref{rays}.
Now, we call $Q_n$  {\em a nearest component} (respectively {\em a furthest component})  if
\[{\rm dist}\left(a_n, L\right)<\min\left\{ {\rm dist}\left(a_j, L\right): j\ne n\right\}\quad \left(\text{or}\ {\rm dist}\left(a_n, L\right)>\max\left\{ {\rm dist}\left(a_j, L\right): j\ne n\right\}\right).\]
Clearly, there exist at most one nearest component and at most one furthest component. We claim that all the other $Q_n$ is also a component of  $\overline{U}\cap K$.
Actually, if $U_i$ denotes the component of $\bbC\setminus\overline{U}$ with $\partial U_i=L_i$ we can choose for all $n\ge1$ two rays $L_{n,1}\subset \overline{U_1}$ and $L_{n,2}\subset \overline{U_2}$  parallel to $L$ such that  $a_n\in L_{n,1}$ and $b_n\in L_{n,2}$.

If $Q_n$ is neither nearest nor furthest, then there exist two components $Q_j, Q_n$ satisfying
${\rm dist}\left(a_j, L\right)< {\rm dist}\left(a_n, L\right) < {\rm dist}\left(a_m, L\right)$. The above Figure \ref{rays} gives a simplified depiction for relative locations of $L, L_i$ and $a_j, a_n, a_m$.
In this case, we can use Jordan curve theorem to infer that $Q_n\setminus(L_1\cup L_2)$ is contained in a bounded component $W$ of $\bbC\setminus M$, where $M=\overline{a_ja_m}\cup \overline{b_jb_m}\cup Q_j\cup Q_m$ is a continuum which lies entirely in $V$. Therefore, $(W\cup M)\cap K$ is a subset of $V\cap K$; and we are able to choose a separation $\overline{W}\cap K=A\cup B$ with $Q_n\subset A$ and $(Q_j\cup Q_m)\cap A=\emptyset$. From this we can infer that $Q_n$ is also a component of $A$, which is disjoint from $B_1:=(\overline{U}\setminus W)\cap K$. That is to say, the intersection $\overline{U}\cap K$ is divided into two disjoint compact subsets, $A$ and $B\cup B_1$. Combining this with the fact that  $Q_n$ is a component of $A$, we readily see that $Q_n$ is a component of $\overline{U}\cap K$.
\end{proof}

Finally, we prove Theorem \ref{sufficient}.
\begin{proof}[{\bf Proof of Theorem \ref{sufficient}}]
Suppose that (1) and (2) hold and assume that $K$ does not satisfy the Sch\"onflies condition. Then there exists a region $U$ bounded by two parallel lines $L_{1}$ and $L_{2}$ such that $\overline{U}\cap K$ has infinitely many components $\{N_{m}\}$ which intersect both $L_{1}$ and $L_{2}$. Due to (2), there exists infinitely many $\{N_{m_{i}}\}$ in one component of $K$, denoted by $P$, and these $N_{m_i}$ are also in different components of $\overline{U}\cap P$. That is to say, $P$ does not satisfy the Sch\"onflies condition. By Theorem \ref{LC-criterion}, we reach a contradiction to the local connectedness of $P$. This verifies the ``if'' part.

To prove the ``only if'' part we assume that $K$ satisfies the Sch\"onflies condition and verify Conditions (1) and (2) as follows.

Given any component $P$ of $K$ and the region $U$ bounded by any two parallel lines $L_1$ and $L_2$, it is routine to check that every component of $\overline{U}\cap P$ is also a component of $\overline{U}\cap K$. Thus $\overline{U}\cap P$ has finitely many components intersecting both $L_1$ and $L_2$. By Theorem \ref{LC-criterion}, $P$ is locally connected.

If (2) is not true, then there exists an infinite sequence of sub-continua $\{N_m\}$ lying in distinct components of $K$, denoted by $Q_m$, such that their diameters $\delta(N_m)$ are greater than a positive constant $C$. Going to an appropriate subsequence, we may assume with no loss of generality that $\left\{N_{m}\right\}$ converges to a continuum $N_\infty\subset K$ under Hausdorff distance. Clearly, the diameter $\delta(N_\infty)$ is at least $C$. So we can choose two points $x,y\in N_\infty$ with $|x-y|=C$ and two parallel lines $L_1, L_2$ perpendicular to the line crossing $x,y$ and intersecting the interior of the segment $\overline{xy}$. Now we can see that all but finitely many $N_m\subset Q_m$ must intersect $L_1$ and $L_2$ at the same time. This implies that for all but finitely many integers $m\ge1$, there exists a component $P_m$ of $\overline{U}\cap Q_m$ intersecting both $L_1$ and $L_2$. Here $U$ is the strip bounded by $L_1\cup L_2$. For those integers $m$, the continua $P_m$ are each a component of $\overline{U}\cap K$, which contradicts the assumption that $K$ satisfies the Sch\"onflies condition.
\end{proof}

\section{The Quotient $\mathcal{D}_K$ is a Peano compactum}\label{sec:ps}
Throughout this section, we fix a compact set $K$ in the plane and denote by $\sim$  the Sch\"onflies equivalence relation given in Definition \ref{model}; moreover, the decomposition $\mathcal{D}_K$ of $K$ is made up of the equivalence classes $[x]=\{y\in K: x\sim y\}$, for $x\in K$. Then $\mathcal{D}_K$ is an upper semi-continuous decomposition. In the following proposition we show that its elements are each a subcontinuum of $K$, so that $\Dc_K$ is a monotone decomposition and $\pi: K\rightarrow \mathcal{D}_K$ a monotone map.

\begin{prop}\label{connected-class}
Given a compactum $K\subset\mathbb{C}$ and two points $x,y\in K$. If $(x,y)\in R_K$ then the class $[x]$ of $\sim$ contains a continuum $P_\infty\subset K$ with $\{x,y\}\subset P_\infty$. Consequently, the partition $\mathcal{D}_K$ is a monotone decomposition.
\end{prop}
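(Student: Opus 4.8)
The plan is to show directly that if $(x,y)\in R_K$ with $x\neq y$, then there is a continuum $P_\infty\subset K$ containing $\{x,y\}$, and moreover that every such $P_\infty$ lies entirely inside a single class of $\sim$; the statement about $\mathcal{D}_K$ being monotone then follows, since each class $[x]$ is connected. First I would unwind Definition \ref{model}: since $(x,y)\in R_K$ and $x\neq y$, there are disjoint simple closed curves $J_1\ni x$, $J_2\ni y$ such that $\overline{U(J_1,J_2)}\cap K$ contains a sequence of components $Q_n$, each meeting both $J_1$ and $J_2$, with $Q_n\to P_\infty$ in the Hausdorff metric and $\{x,y\}\subset P_\infty$. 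Because the hyperspace of nonempty compact subsets of the compact set $\overline{U(J_1,J_2)}$ is compact, after passing to a subsequence the limit $P_\infty$ exists; a Hausdorff limit of a sequence of continua is a continuum (standard, e.g.\ via \cite[p.38, Theorem 3.9 / p.72]{Nadler92}-type results), so $P_\infty$ is a subcontinuum of $K$ containing both $x$ and $y$. This already gives the first assertion.

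Next I would argue that $P_\infty$ is contained in one equivalence class of $\sim$. The natural route is to show that any two points $u,v\in P_\infty$ are themselves related under $R_K$, or at least that the relation ``lie in a common limit continuum of the above type'' is absorbed into the smallest closed equivalence $\sim$. Concretely: fix $u,v\in P_\infty$. If $u=v$ there is nothing to prove; otherwise I want disjoint simple closed curves $J_1'\ni u$, $J_2'\ni v$ witnessing $(u,v)\in R_K$. Since $u,v\in P_\infty = \lim Q_n$ and each $Q_n$ is connected and meets both $J_1$ and $J_2$, for large $n$ the continuum $Q_n$ passes near both $u$ and $v$; choosing small simple closed curves $J_1'$ around $u$ and $J_2'$ around $v$ (disjoint, and small enough that $Q_n$ must cross each of them, using Remark \ref{separation-annulus} / Lemma \ref{separating-gamma} to produce the relevant components in the annulus $U(J_1',J_2')$), one extracts from $\overline{U(J_1',J_2')}\cap K$ an infinite sequence of components meeting both $J_1'$ and $J_2'$ whose Hausdorff limit contains $\{u,v\}$ — this is exactly $(u,v)\in R_K$. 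Hence $P_\infty$ is contained in a single $R_K$-``clump'', and since $\sim$ is an equivalence containing $R_K$, all of $P_\infty$ lies in one class $[x]$; in particular $\{x,y\}\subset [x]$ and $[x]$ contains the continuum $P_\infty$.

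Finally, to conclude that $\mathcal{D}_K$ is a monotone decomposition, I would show each class $[x]$ is connected. Suppose not; then $[x] = A\sqcup B$ is a separation into nonempty compacta. One shows the relation $\sim'$ obtained from $\sim$ by ``splitting'' this particular pair $A,B$ — more precisely, the equivalence whose classes are those of $\sim$ except $[x]$ is replaced by finer pieces consistent with the separation — is still a closed equivalence containing $R_K$: the key point is that $R_K$ never relates a point of $A$ to a point of $B$, because by the previous paragraph any $(u,v)\in R_K$ lies in a common continuum $P_\infty\subset[x]$, and a connected set cannot meet both sides of the separation $A\sqcup B$, forcing $u,v$ to the same side. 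This contradicts minimality of $\sim$, so $[x]$ is connected.

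The main obstacle I anticipate is the middle step: making rigorous that points of the limit continuum $P_\infty$ are genuinely $R_K$-related, i.e.\ producing the witnessing disjoint simple closed curves and the infinite family of crossing components in the small annulus around $u$ and $v$. This requires care with the Hausdorff convergence $Q_n\to P_\infty$ (the $Q_n$ may be much larger than $P_\infty$, or wiggle), and an application of the annular version of Lemma \ref{separation} (via Remark \ref{separation-annulus}) and Lemma \ref{separating-gamma} to replace ``$Q_n$ comes close to $u$'' with ``$Q_n$ actually crosses a small simple closed curve about $u$''. Everything else — compactness of the hyperspace, limits of continua being continua, the minimality argument for $\sim$ — is routine.
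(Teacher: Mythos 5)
The first assertion of your proposal (that $P_\infty$ is a subcontinuum of $K$ containing $x$ and $y$) is fine, but the middle step — which you yourself flag as the main obstacle — is a genuine gap, and as sketched it cannot be closed. Definition \ref{model} requires the witnessing curves to pass \emph{through} the two points ($J_1'\ni u$, $J_2'\ni v$) and requires the Hausdorff limit of the chosen crossing components to contain $\{u,v\}$. Small simple closed curves ``around'' $u$ and $v$ violate both requirements: $u\notin J_1'$, and the crossing components lie in the closed annulus $\overline{U(J_1',J_2')}$, which is disjoint from $u$ and $v$, so their limit cannot contain $\{u,v\}$. Even if you force the curves to pass through $u$ and $v$, the components of $\overline{U(J_1',J_2')}\cap K$ that cross both curves are pieces of the $P_n$, and nothing guarantees they accumulate at $u$ or $v$: the part of $P_n$ that approaches $u$ may be joined to the crossing piece only through the disk bounded by $J_1'$, so inside the small annulus the crossing component can stay far from $u$. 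Hence the claim that every pair $u,v\in P_\infty$ is $R_K$-related is not established by your argument, and the paper does not prove it either (it may simply be false).

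The paper's proof avoids this by proving something weaker but sufficient: for $z\in P_\infty\cap U$ and $z_1\in P_\infty\cap(J_1\cup J_2)$ one has $(z,z_1)\in\overline{R_K}$. Concretely, it cuts the annulus into a topological disk along two arcs disjoint from $K$ (Item (3) of Lemma \ref{separation} via Remark \ref{separation-annulus}), takes the circle $\Gamma_r$ of radius $r$ about $z$ and a curve $\Gamma$ containing the semicircle $\eta_r$ about $z_1$, and uses the Cut Wire Theorem to obtain, inside each $P_n$, a component $Q_n$ of $\overline{U(\Gamma,\Gamma_r)}\cap K$ meeting both $\Gamma_r$ and $\eta_r$. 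A Hausdorff limit of the $Q_n$ meets $\Gamma_r$ and $\eta_r$, which yields points $z',z_1'\in P_\infty$ at distance exactly $r$ from $z$ and $z_1$ with $(z',z_1')\in R_K$; letting $r\to0$ and using that $\sim$ is a \emph{closed} equivalence gives $z\sim z_1$, and transitivity then forces $P_\infty\subset[x]$. Your proposal needs this closure-plus-transitivity mechanism (relating interior points of $P_\infty$ to boundary points through approximating pairs) rather than direct $R_K$-relatedness of arbitrary pairs. Finally, in your last step, splitting the single class $[x]$ along the separation $A\sqcup B$ need not produce a closed relation: pairs from other classes can converge to a pair in $A\times B$, so closedness fails for that ad hoc splitting. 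The standard repair is to pass to the relation ``same component of the same $\sim$-class'' for all classes at once, whose closedness follows by taking Hausdorff limits of the connecting components and using closedness of $\sim$; by minimality it must contain $\sim$, so the classes are connected.
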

\begin{proof} We add $\infty$ to $\bbC$ and construct a proof by considering $K$ as a subset of $\hat{\bbC}$. Firstly,
the assumption $(x,y)\in R_K$ says that there exist two disjoint simple closed curves,  $J_1\ni x$ and $J_2\ni y$,  such that the region  $U=U(J_1,J_2)$ with $\partial U=J_1\cup J_2$ has the following property: the common part $\overline{U}\cap K$ has infinitely many components $P_1,P_2,\ldots$ that intersect both $J_1$ and $J_2$ and converge under Hausdorff distance to a continuum $P_\infty$ containing $\{x,y\}$. Here we see that $U$ is an open annulus. We will show that for any $z\in P_\infty\cap U$ and $z_1\in P_\infty\setminus U=P_\infty\cap(J_1\cup J_2)$ the closure $\overline{R_K}$ of $R_K$ as a subset of the product $K\times K$ contains $(z,z_1)\in K^2$. Applying transitivity of the equivalence $\sim$, which contains $\overline{R_K}$, we can infer that $[x]=[z]\supset P_\infty$ holds for any $z\in P_\infty\cap U$.

We only need to verify that for any small $r>0$ there are two points $z', z_1'\in P_\infty$ with $|z-z'|=|z_1-z_1'|=r$ such that $(z',z_1')\in R_K$. And, we may assume that $z_1\in J_1$. If $z_1\in J_2$ the same argument also works.

By Sch\"onflies Theorem, we may consider $U$ as the annulus $\{1<|z|<2\}$. By Remark \ref{separation-annulus}, we may use Item (3) of Lemma \ref{separation} to find two arcs $\alpha_0, \beta_0\subset \overline{U}$ that are disjoint from $K$, such that $\overline{U}\setminus(\alpha_0\cup\beta_0)$ has two components one of which contains $\overline{U}\cap K$. Let $W$ be the closure of this component, which is a topological disk.  For $i=1,2$ let $\gamma_i=J_i\cap\partial W$. Then $\partial W$ consists of the four arcs $\alpha_0, \beta_0, \gamma_1$, and $\gamma_2$. Consider $W$ as the unit square, in a way that $\alpha_0$ corresponds to $\{0\}\times[0,1]$ and $\gamma_1$ to $[0,1]\times\{1\}$. See Figure \ref{location} for relative locations of $\alpha_0,\beta_0,\gamma_1$ and $\gamma_2$ in $W$.
\begin{figure}[ht]
\begin{center}
\begin{tikzpicture}[scale=1.3]
\footnotesize
 \pgfmathsetmacro{\xone}{0}
 \pgfmathsetmacro{\xtwo}{16}
 \pgfmathsetmacro{\yone}{0}
 \pgfmathsetmacro{\ytwo}{3.5}

% the two rectangles
\draw[gray,very thick] (\xone,\yone) -- (\xone+3*\xtwo/7,\yone) --  (\xone+3*\xtwo/7,\yone+6*\ytwo/7) --
(\xone,\yone+6*\ytwo/7) -- (\xone,\yone);

\draw[blue,very thick] (1.5*\xtwo/7,2.0*\ytwo/7) circle (0.3*\xtwo/7);
\draw[fill=blue] (1.5*\xtwo/7,2.0*\ytwo/7) circle (0.15em);
\draw[black] (1.5*\xtwo/7,2.0*\ytwo/7) node[anchor=north] {$z$};

\draw[black] (\xone,3*\ytwo/7) node[anchor=east] {$\alpha_0$};
\draw[black] (3*\xtwo/7,3*\ytwo/7) node[anchor=west] {$\beta_0$};
%\draw[black] (1.75*\xtwo/7,6*\ytwo/7) node[anchor=south] {$x\in\gamma_1\subset J_1$};
\draw[black] (1.75*\xtwo/7,6*\ytwo/7) node[anchor=south] {$z_1\in\gamma_1\subset J_1$};
\draw[line width=1pt,color=blue] (1.2*\xtwo/7,6*\ytwo/7) arc(180:360:0.3*\xtwo/7);

\draw[black] (1.75*\xtwo/7,0*\ytwo/7) node[anchor=north] {$\gamma_2\subset J_2$};
\draw[fill=blue] (1.5*\xtwo/7,6*\ytwo/7) circle (0.15em);
%\draw[fill=blue] (1.5*\xtwo/7,0*\ytwo/7) circle (0.15em);

% the components Q, Q_k
\foreach \p in {.4,.9,1.2}   \draw[gray,very thick] (0.1*\xtwo/7+\p*\xtwo/7,0*\ytwo/7) -- (0.1*\xtwo/7+\p*\xtwo/7,6*\ytwo/7);
\draw[black] (1.1*\xtwo/7,3.9*\ytwo/7) node[anchor=west] {$P_n'$};
\end{tikzpicture}
\end{center}\vskip -0.5cm
\caption{Relative locations of $\alpha_0, \beta_0$ and $z_1\in \gamma_1$ in $W$.}\label{location}
\vskip -0.25cm
\end{figure}
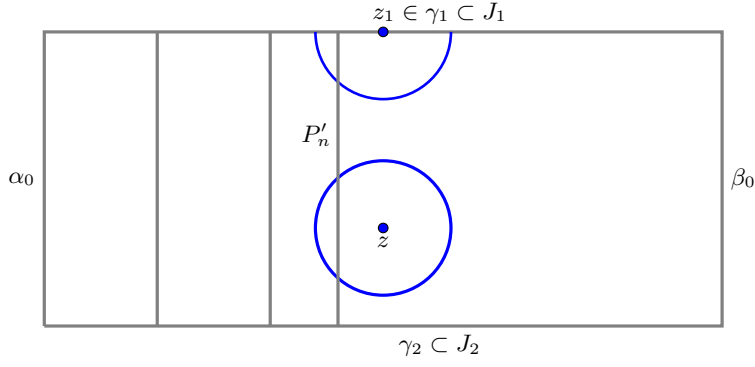
%By Item (3) of Lemma \ref{key-connection},   the difference $W\setminus K$ has infinitely many components $R_1,R_2,\ldots$ that intersect both $\gamma_1$ and $\gamma_2$. For each $n\ge1$ choose an arc $\alpha_n\subset R_n$,  connecting a point on $\gamma_1$ to a point on $\gamma_2$, such that the interior of $\alpha_n$ is contained in $U$. Clearly, we have  $z_1\in\gamma_1$; moreover, with no loss of generality, we may assume that $\alpha_{n+1}$ separates $\alpha_n$ from $\beta_1$ in $\overline{U}$ for all $n\ge1$. Then it is easy to see that for all $n\ge1$ the difference \[\bbC\setminus\left(\gamma_1\cup \gamma_2\cup\alpha_n\cup\alpha_{n+1}\right)\] has a unique bounded component, denoted as $D_n$, whose boundary is a simple closed curve. By Item (2) of Lemma \ref{separation}, we can infer that every $D_n$ contains at least one of the components $P_1,P_2,\ldots$. Denote this component as $P_n'$. Then $\lim\limits_{n\rightarrow\infty}\alpha_n= \lim\limits_{n\rightarrow\infty}P_n'=P_\infty$ under Hausdorff distance.

For any number $r>0$ smaller than $\frac12\min\left\{{\rm dist}(z,\partial W),\ {\rm dist}(z_1,\alpha_0\cup\beta_0)\right\}$,
let $D_r(z)$ be the open circular disk centered at $z$ with radius $r$ and denote its boundary as $\Gamma_r$. Let $D_r(z_1)$ be the open circular disk centered at $z_1$ with radius $r$ and denote by $\eta_r$ the semi-circle $\partial D_r(z_1)\cap \overline{W}$. Then $\Gamma=(\partial W\setminus D_r(z_1))\cup\eta_r$ is a simple closed curve disjoint from $\Gamma_r$. As $P_\infty\supset\{z,z_1\}$ and $\lim\limits_{n\rightarrow\infty}P_n=P_\infty$, we see that all but finitely many $P_n$ must intersect $\Gamma_r$ and $\eta_r$ both. Applying
Theorem \ref{cut-wire} (\cwt), we see that   $P_n\setminus\left(D_r(z)\cup D_r(z_1)\right)$ has a component $Q_n$ intersecting $\Gamma_r$ and $\eta_r$ at the same time. Here we recall that the region $U(\Gamma,\Gamma_r)$ bounded by $\Gamma\cup\Gamma_r$ is a subset of $W$.
%the well known Boundary Bumping Theorem I \cite[p.73]{Nadler92} to each of those $P_n$, we can infer that every component of $P_n\setminus\left(D_r(z)\cup D_r(z_1)\right)$ intersects either $\Gamma_r$ or $\eta_r$. Clearly, the union of all the components of the difference $P_n\setminus\left(D_r(z)\cup D_r(z_1)\right)$ intersecting $\Gamma_r$ is a non-empty closed set, denoted as $A_r$; so is the union of those intersecting $\eta_r$, denoted as $B_r$. Since $A_r\cup\left(P_n\cap D_r(z)\right)$ and $B_r\cup\left(P_n\cap D_r(z_1)\right)$ are two compact sets whose union equals $P_n$, the connectedness of $P_n$ indicates that \[A_r\cap B_r=[A_r\cup(P_n\cap D_r(z))]\cap[B_r\cup(P_n\cap D_r(z_1))]\ne\emptyset.\]Therefore, the difference $P_n\setminus\left(D_r(z)\cup D_r(z_1)\right)$ has a component $Q_n$ intersecting $\Gamma_r$ and $\eta_r$ at the same time. Here we recall that the region $U(\Gamma,\Gamma_r)$ bounded by $\Gamma\cup\Gamma_r$ is a subset of $W$.

Then, our proof  will be completed if only we can show that $Q_n$ is actually a component of $\overline{U(\Gamma,\Gamma_r)}\cap P_n$, which indicates that  $Q_n$ is also a component of $\overline{U(\Gamma,\Gamma_r)}\cap K$. To this end, we denote by $Q_n'$ the component of $\overline{U(\Gamma,\Gamma_r)}\cap P_n$ which contains $Q_n$. Then we have $Q_n'\subset P_n$ and $Q_n'\cap(D_r(z)\cup D_r(z_1))=\emptyset$. This indicates that $Q_n'$ is a sub-continuum of $P_n\setminus\left(D_r(z)\cup D_r(z_1)\right)$, which in turn indicates the equality $Q_n=Q_n'$.
\end{proof}

%\begin{rema} The results in Proposition \ref{connected-class} does not imply that the fiber $\overline{R_K}[x]$ is connected, although it does imply the connectedness of the class $[x]$. \end{rema}

In the remaining part of this section, we assume that $\mathcal{D}_K$ is equipped with a metric $d$, which is compatible with the quotient topology. To prove Theorem \ref{quotient}, we start from a special case where the compactum $K$ is actually a continuum.

\begin{theo}\label{quotient-con}
If $K$ is a continuum then $\mathcal{D}_K$ is locally connected under quotient topology.
\end{theo}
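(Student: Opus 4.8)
The plan is to show that the quotient $\mathcal{D}_K$ is locally connected by verifying that it satisfies the Sch\"onflies condition and then invoking Theorem~\ref{LC-criterion}, after first noting that $\mathcal{D}_K$ is a Peano-type object only once we know it embeds in the plane. However, since $\mathcal{D}_K$ need not embed in $\bbC$ (see Example~\ref{no-FS-core}), a cleaner route is to argue directly with the classical criterion for local connectedness: a continuum is locally connected iff for every $\varepsilon>0$ it can be written as a finite union of subcontinua each of diameter $<\varepsilon$ (the ``Property S'' characterization). So first I would fix $[x]\in\mathcal{D}_K$ and $\varepsilon>0$, and aim to produce a connected neighborhood of $[x]$ of diameter $<\varepsilon$ in the quotient metric $d$.

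The key idea is to exploit the definition of the Sch\"onflies equivalence together with Lemma~\ref{separating-gamma} and Remark~\ref{separation-annulus}. Suppose for contradiction that $\mathcal{D}_K$ is not locally connected at $[x]$. Pulling back along the monotone projection $\pi$, non-local-connectedness at $[x]$ should manifest as a sequence of classes $[x_n]\to[x]$ in $\mathcal{D}_K$ lying in pairwise disjoint ``quasi-components'' relative to some neighborhood, exactly as in the proof of Theorem~\ref{LC-criterion}. Concretely, I would pick a small round disk $V$ about a point of $[x]$ with $\partial V$ disjoint from $[x]$, and pass to a subsequence of representatives $x_n\in K$ whose components $P_n$ in $V\cap K$ are pairwise disjoint with $P_n\to P_\infty\supset$ (part of) $[x]$; using connectedness considerations one finds two parallel lines $L_1,L_2$ (equivalently, after Remark~\ref{separation-annulus}, two disjoint simple closed curves $J_1,J_2$) such that infinitely many $P_n$ meet both, and by Item~(3) of Lemma~\ref{key-connection} infinitely many components of $\overline{U}\cap K$ meet both. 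These limiting components then exhibit, via Definition~\ref{model}, points related under $R_K$ — forcing the corresponding classes to collapse in $\mathcal{D}_K$, which contradicts the assumption that the $[x_n]$ remain separated from $[x]$. This is the heart of the argument.

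The main obstacle is making precise the passage from ``$\mathcal{D}_K$ is not locally connected at $[x]$'' back down to a concrete geometric configuration in $K$ — i.e.\ constructing the disjoint subcontinua $P_n\subset V\cap K$ and the strip $U$ so that the hypotheses of Lemma~\ref{key-connection}(3) and Definition~\ref{model} are genuinely met, and then verifying that the resulting $R_K$-relations really do identify a point of $[x_n]$ with a point of $[x]$ (rather than merely identifying two auxiliary points far from $[x]$). Handling this requires care with the quotient metric $d$: I would use that $\pi$ is monotone and $K$ compact to transfer a failure of Property~S upstairs, and use the fact (Proposition~\ref{connected-class}) that each class $[x]$ is a continuum, so that a small $d$-neighborhood of $[x]$ pulls back to a set whose $[x]$-component is not a $\pi$-saturated neighborhood of $[x]$ in $K$. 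Once the geometric picture is set up, Lemma~\ref{separating-gamma} (in the spherical form of Remark~\ref{separation-annulus}) supplies the separating simple closed curves needed to apply the Sch\"onflies relation, and the contradiction closes the proof. A secondary technical point is ensuring the limit continuum $\lim_n Q_n$ under Hausdorff distance actually contains a pair $\{x',z'\}$ with $x'\in[x]$ — this I expect to follow by choosing $V$ small and invoking the already-established behavior of limit continua inside $\overline{U}\cap K$ exactly as in the proof of Theorem~\ref{LC-criterion}.
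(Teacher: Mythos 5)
Your overall strategy (contradiction, pull the failure of local connectedness back to $K$, build separating curves, and use the Sch\"onflies relation $R_K$ to force an identification) is the same as the paper's, but the proposal leaves open exactly the two points where the real work lies, and as written the contradiction does not close. First, you hope that the $R_K$-identification produced at the end relates a point of $[x_n]$ to a point of $[x]$, and you yourself flag the danger that it ``merely identifies two auxiliary points far from $[x]$'' --- but you offer no mechanism to rule this out, and in fact it cannot be ruled out: the points related under $R_K$ are limits of points lying on the two separating curves, not in the classes $[x_n]$ or $[x]$. The paper resolves this differently: it takes the limits $x_\infty$ of representatives $x_n$ and $y_\infty$ of points $y_n$ with $\pi(y_n)\in Q_n\cap\partial B_\varepsilon$, uses Lemma~\ref{separating-gamma} to place two \emph{nested} simple closed curves $J_2\subset\mathrm{int}(J_1)$ around $[x_\infty]$, and --- crucially, using upper semi-continuity of $\Dc_K$ --- chooses $J_2$ so close to $[x_\infty]$ that $\pi(J_1\cap K)\cap\pi(J_2\cap K)=\emptyset$. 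The contradiction is then $R_K\cap(J_1\times J_2)\neq\emptyset$, i.e.\ a point of $J_1\cap K$ is identified with a point of $J_2\cap K$, against the prearranged disjointness. Without some analogue of this choice, showing that ``classes collapse'' contradicts nothing, since the $[x_n]$ are not metrically separated from $[x]$ (indeed $d(\pi(x_n),\pi(x))\to 0$); they only lie in different components of the ball $B_\varepsilon$.

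Second, to invoke Definition~\ref{model} (or Lemma~\ref{key-connection}(3)) you need infinitely many components of $\overline{U}\cap K$ crossing the annulus, whereas what the failure of local connectedness of $\Dc_K$ hands you are the saturated continua $\pi^{-1}(Q_n)$ crossing it. These are a priori only components of $\overline{U}\cap\pi^{-1}(B_\varepsilon)$, not of $\overline{U}\cap K$. The paper bridges this by splitting $E=\pi^{-1}(\Dc_K\setminus B_\varepsilon^{o})$ into the part $E_0$ outside $[x_\infty]$ (covered by a continuum $E_0^*$) and the rest, and choosing $J_1$ to separate $[x_\infty]$ from $E_0^*$, so that $E$ misses the annulus $A$ and each component of $A\cap\pi^{-1}(Q_n)$ is genuinely a component of $A\cap K$; then the \cwt{} produces crossing components. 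Your sketch skips this step entirely. Finally, a smaller but real defect: your starting configuration (``a small round disk $V$ about a point of $[x]$ with $\partial V$ disjoint from $[x]$'', with pairwise disjoint components $P_n$ of $V\cap K$ converging to part of $[x]$) is not available --- if $[x]$ is a nondegenerate continuum no small circle about one of its points misses it, and the disjoint preimages $\pi^{-1}(Q_n)$ need not fit inside any small planar disk. The argument has to be run around the whole class $[x_\infty]$ with curves supplied by Lemma~\ref{separating-gamma}, as in the paper, rather than by transplanting the local disk picture from the proof of Theorem~\ref{LC-criterion}.
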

\begin{proof} Assume on the contrary that the quotient space $\mathcal{D}_K$ is not locally connected at some point $\pi(x)$, where $x$ is a point in $K$ and $\pi(x)=[x]$. In the following we show that this would give rise to some thing that is absurd.

Under this assumption, there exists a closed ball $B_\varepsilon$ in $\mathcal{D}_K$ centered at $\pi(x)$ with radius $\varepsilon>0$ whose component containing the point $\pi(x)$ is not a neighborhood of $\pi(x)$. Let $Q$ be the component of $B_\varepsilon$ containing $\pi(x)$. Then we can find an infinite sequence $\pi(x_n)$ in $B_\varepsilon\setminus Q$ with $d(\pi(x_n),\pi(x))\rightarrow 0$ such that for $n\ne l$ the components $Q_n\ni \pi(x_n)$ and $Q_l\ni \pi(x_l)$ are disjoint.

Fix a point $\pi(y_n)$ on $Q_n\cap \partial B_\varepsilon$ for all $n\ge 1$. By coming to an appropriate subsequence, if necessary, we may assume that the two sequences $\{x_n\}, \{y_n\}$ in $K$ are convergent, with $x_n\rightarrow x_\infty$ and $y_n\rightarrow y_\infty$. Then, continuity of $\pi$ guarantees that $\pi(x_\infty)=\lim\limits_{n\rightarrow\infty}\pi(x_n)=\pi(x)$ and that
$\pi(y_\infty)=\lim\limits_{n\rightarrow\infty}\pi(y_n)$ belongs to the boundary of $B_\varepsilon$. Now, considered as subsets of $K$, we see that $\pi(x_\infty)=[x_\infty]$ and $\pi(y_\infty)=[y_\infty]$ are disjoint planar continua. In particular, we have $[x_\infty]\subset\pi^{-1}\left(B_\varepsilon^o\right)$ and
\[
[y_\infty]\subset E:=\pi^{-1}\left(\mathcal{D}_K\setminus B_\varepsilon^o\right)=K\setminus\pi^{-1}\left(B_\varepsilon^o\right),
\]
where $B_\varepsilon^o$ denotes the interior of $B_\varepsilon$. Since $\pi: K\rightarrow \mathcal{D}_K$ is a monotone map, the pre-images $\pi^{-1}(Q), \pi^{-1}(Q_n)$ are disjoint sub-continua of $K$. We may assume that $[y_\infty]$ lies in the unbounded component of $\bbC\setminus[x_\infty]$. (Otherwise, we exchange the places of $[x_\infty]$ and $[y_\infty]$.) Let $E_0$ be the part of $E$ lying in the unbounded component of $\bbC\setminus[x_\infty]$. Then $E_1=E\setminus E_0$ is compact and disjoint from $E_0$. Here it is possible that $E_1=\emptyset$; moreover, we may cover $E_0$ by a continuum $E_0^*$ that is disjoint from $[x_\infty]$.

By Lemma \ref{separating-gamma} there is a simple closed curve $J_1$, disjoint from $[x_\infty]\cup E_0^*$, such that $[x_\infty]$ is contained in the interior of $J_1$ and $E_0^*$ in the exterior. Using this lemma again, we may find a simple closed curve $J_2$, lying in the interior of $J_1$, such that $[x_\infty]$ is also contained in the interior of $J_2$. As $\Dc_K$ is upper semi continuous, we may choose $J_2$ in a small neighborhood of $[x_\infty]$ such that the following equation holds:
\begin{equation}\label{key-disjoint}
\pi(J_1\cap K)\cap\pi(J_2\cap K)=\emptyset.
\end{equation}
Let $D_1$ be the exterior of $J_1$, and $D_2$ the interior of $J_2$. Then $A=\bbC\setminus\left(D_1\cup D_2\right)$ is a closed topological annulus. Moreover, by the choices of $Q_n$, we see that $\pi^{-1}(Q_n)$ intersects $J_1, J_2$ both for all $n\ge1$. Since $\pi^{-1}(Q_n)$ is a component of $\pi^{-1}(B_\varepsilon)$ and since $E\cap A\subset\left[(E_1\cup E_0^*)\cap A\right]=\emptyset$, each component of $A\cap\pi^{-1}(Q_n)$ is also a component of $A\cap K$. Applying \cwt, we see that for all $n\ge1$ the intersection $A\cap\pi^{-1}(Q_n)$ has a component that intersects both $J_1$ and $J_2$. By the definition of Sch\"onflies relation $R_K$, we see that $R_K\cap(J_1\times J_2)\ne\emptyset$. This indicates that $\pi(J_1\cap K)\cap\pi(J_2\cap K)\ne\emptyset$, a contradiction to Equation (\ref{key-disjoint}).
\end{proof}

Then, we discuss the case where $K$ is a disconnected compactum.

\begin{proof}[{\bf Proof for Theorem \ref{quotient}}]
By Theorem \ref{quotient-con}, we just need to show that for any  $\varepsilon>0$ there are at most finitely many components of $\mathcal{D}_K$ with diameter greater than $\varepsilon$.

Otherwise, there is an infinite sequence $\left\{Q_j: j\right\}$ of components whose diameters  are greater than a constant $\varepsilon_0>0$. By monotonicity of the natural projection $\pi:K\rightarrow \mathcal{D}_K$, the preimages $P_j:=\pi^{-1}\left(Q_j\right)$ are components of $K$. By uniform continuity of $\pi$, the diameters of $P_j$ are greater than a constant $t_0>0$. By going to a subsequence, if necessary,we may assume that the sequence $\{P_j\}$ converges to a limit continuum $P_\infty$ under Hausdorff distance. Then, continuity of $\pi$ ensures that $Q_{j}\rightarrow \pi(P_\infty)$, which is a continuum of diameter  at least $\varepsilon_0$. So, we can fix two points $x_1,x_2\in P_\infty$ with $d(\pi(x_1),\pi(x_2))\ge\varepsilon_0$. Clearly, the two equivalence classes $[x_1], [x_2]$ are disjoint continua in the plane.
Since $\mathcal{D}_K=\{[z]: z\in K\}$ is an upper semi-continuous decomposition, we may fix two open sets $U_1\supset [x_1]$ and $U_2\supset [x_2]$ with disjoint closures satisfying the following equation
\begin{equation}\label{key-disjoint-0}
\pi(U_1)\cap\pi(U_2)=\emptyset.
\end{equation}
Let $J_i$ be the circle centered at $x_i$ with an arbitrary radius $r>0$ such that $J_i\subset U_i$. Let $W$ be the component of $\bbC\setminus(J_1\cup J_2)$ with $\partial W=J_1\cup J_2$. The containment $\{x_1,x_2\}\subset P_\infty$ implies that all but finitely many $P_j$ intersect both $J_1$ and $J_2$. Recall that every $P_j$ is a component of $K$. For each of those $P_j$ intersecting both $J_1$ and $J_2$, the intersection $\overline{W}\cap P_j$ has a component $Q_j$, which intersects $J_1$ and $J_2$ both. Those $Q_j$ are each a component of $\overline{W}\cap K$. Therefore, a subsequence of $\{Q_j\}$  converges to a limit continuum $Q_\infty$, which intersects both $J_1$ and $J_2$. For any $z_1\in Q_\infty\cap J_1$ and $z_2\in Q_\infty\cap J_2$, we have $\pi(z_1)=\pi(z_2)$. This contradicts Equation~(\ref{key-disjoint-0}).
\end{proof}

\section{$\mathcal{D}_K$ is the Core Decomposition of $K$ with Peano quotient}\label{sec:cd}

This section has a single aim, to prove Theorem \ref{finest}. And we {\bf only need to show} that $f(x_1)=f(x_2)$ for any  $x_1,x_2\in K$ with $(x_1,x_2)\in R_K$,
since $\sim$ is the smallest closed equivalence on $K$ containing $R_K$ and since $\{f^{-1}(y): y\in Y\}$ is also a monotone decomposition of $K$.
%In the rest of this section, we denote by $\rho$ the metric on $Y$.
%Recall that, for any component $N$ of $K$, the relation $R_N$ is a subset of $R_K\cap(N\times N)$. Therefore, we only need to prove that $f(x_1)=f(x_2)$ for any  $x_1,x_2\in K$ with $(x_1,x_2)\in R_N$ for some component $N$ of $K$. In other words, we may assume with no loss of generality that $K$ is a continuum.

\begin{proof}[{\bf Proof for Theorem \ref{finest}}]
By Definition \ref{model}, we may fix two disjoint simple closed curves $J_i\ni x_i$ ($i=1,2$) such that $\overline{W}\cap K$ has infinitely many components $P_n$, each of which intersects both $J_1$ and $J_2$, such that the sequence $\{P_n\}$ converges to a continuum $P_\infty\supset\{x_1,x_2\}$ under Hausdorff distance. Here $W$ is the only component of $\bbC\setminus(J_1\cup J_2)$ with $\partial W=J_1\cup J_2$.

By applying an appropriate M\"obius transformation, if necessary, we may assume with no loss of generality that $J_1$ separates $J_2$ from infinity.
Since its topology is equivalent to a closed annulus, we may consider $\overline{W}$ to be $A=\{z: 1\le |z|\le 2\}$. By Item (3) of Lemma \ref{separation}, we may fix two arcs  $\alpha_0,\beta_0$ in $A$, which are disjoint from $K$, such that one of the two components of $A\setminus(\alpha_0\cup\beta_0)$ is disjoint from $K$. Denote the other one as $D_0$. Then $\overline{D_0}$ is a topological disk, whose boundary includes $\alpha_0, \beta_0$, one arc on $J_1$ and another on $J_2$.

Suppose on the contrary that $f(x_1)\ne f(x_2)$. Then there would exist a positive number $\varepsilon_0<\frac12 \rho(f(x_1),f(x_2))$ such that for $i=1,2$ the balls  $B_i\subset Y$,  centered at $f(x_i)$ with radius $\varepsilon_0$, have disjoint closures, {\em i.e.} $\overline{B_1}\cap \overline{B_2}=\emptyset$.
In the rest of our proof we set $U_i=f^{-1}(B_i)$.
%Then \begin{equation}\label{key-disjoint-2} \overline{U_1}\cap\overline{U_2}=\emptyset=f\left(\overline{U_1}\right)\cap f\left(\overline{U_2}\right). \end{equation}

The convergence $f(P_n)\rightarrow f(P_\infty)$ under Hausdorff distance ensures that  all but finitely many $f(P_n)$ are of diameter greater than $2\varepsilon_0$.
Since $Y$  is a Peano compactum, all but finitely many $f(P_n)$ must be contained  in a single component  of $Y$, denoted $Y_0$, which also contains $f(P_\infty)$. We may assume that every $f(P_n)$ is entirely contained in $Y_0$. Since $f: K\rightarrow Y$ is monotone, the pre-image $f^{-1}(Y_0)$ is a component of $K$, denoted as $L$. Clearly, the above $P_n$ are all contained in $L$. Therefore, we have $P_\infty\subset L$ and  $(x_1,x_2)\in R_L$.

To complete our proof, we will induce a contradiction to local connectedness of $Y_0$, by showing that $Y_0$ is not locally connected at some point on $f(P_\infty)$. More precisely, let $L_0=L\setminus(U_1\cup U_2)$; then we will find a point $y^\#\in f(P_\infty)\subset Y_0$ such that $f(L_0)$ is a neighborhood of $y^\#$ in $Y_0$, while the component of $f(L_0)$ containing $y^\#$ is not (a neighborhood of $y^\#$ in $Y_0$).

Repeatedly applying Item (3) of Lemma \ref{separation}, we may find an infinite sequence of arcs $\alpha_i\subset\overline{D_0}$, each of which separates two of the components $P_1,P_2,\ldots$ in $\overline{D_0}$. By choosing an appropriate subsequence and change the notations of $\alpha_0$ and $\beta_0$, if necessary, we may assume that for all $n\ge1$ the arc $\alpha_n$ separates $\alpha_{n-1}$ and $\beta_0$ in $\overline{D_0}$. For $n\ge1$ denote by $D_n$
the only bounded component of
$\bbC\setminus\left(\partial D_0\cup\alpha_n\cup\alpha_{n-1}\right)$
whose boundary contains $\alpha_{n-1}\cup\alpha_n$. The closure $\overline{D_n}$ is a topological disk that contains at least one of the components $P_1,P_2,\ldots$. Denote this component as $Q_n$. Then $\lim\limits_{n\rightarrow\infty}Q_n
=\lim\limits_{n\rightarrow\infty}\overline{D_n}=P_\infty$
under Hausdorff distance. See Figure \ref{D_0} for relative locations of $\alpha_0,\beta_0$ and $Q_n$ in $\overline{D_0}$, which is to be considered as $[0,1]^2$ such that $\alpha_0$ corresponds to $\{0\}\times[0,1]$ and $\beta_0$ corresponds to $\{1\}\times[0,1]$.
\begin{figure}[ht]
\begin{center}
\begin{tikzpicture}[scale=1]
\foreach \i in {0,1}
{
\draw[gray] (-1,4*\i)-- (10,4*\i);
\draw[gray] (11*\i-1,0)-- (11*\i-1,4);
\fill[gray] (5,4*\i) circle (0.5ex);
}

%\draw[gray,ultra thick] (2.5,0) -- (2.75,1) -- (2.25, 3) -- (2.5,4); \draw[gray,ultra thick] (1,0) -- (1.25,1) -- (0.75, 3) -- (1,4);
%
\draw[gray,ultra thick] (2,0) -- (2.25,1) -- (1.75, 3) -- (2,4);
\draw[] (2,2) node[left] {$Q_n$};
\draw[] (1.75,3) node[left] {$x_{n,1}$};
\draw[] (2.25,1) node[left] {$x_{n,2}$};

\draw[gray,ultra thick] (3,0) -- (3.25,1) -- (2.75, 3) -- (3,4);
\draw[gray,ultra thick] (3.5,0) -- (3.75,1) -- (3.25, 3) -- (3.5,4);
\draw[gray,ultra thick] (5,0) -- (5.15,1) -- (4.85, 3) -- (5,4);

\draw[] (-1,2) node[left] {$\alpha_0$};
\draw[] (10,2) node[right] {$\beta_0$};

%\draw[] (1,2) node[left] {$\alpha_{n}$}; \draw[] (2.5,2) node[right] {$\alpha_{n+1}$};

\draw[] (3,2) node[left] {$\widetilde{\alpha_j}$};
\draw[] (3.5,2) node[right] {$\widetilde{\alpha_{j+1}}$};
\fill[gray] (2.25,1) circle (0.5ex);
\fill[gray] (1.75,3) circle (0.5ex);
\draw[gray, ultra thick] (2.25,1) -- (5,0);
\draw[gray, ultra thick] (1.75,3) -- (5,4);

\draw[] (5,0) node[below] {$x_2$};
\draw[] (5,4) node[above] {$x_1$};

\fill[gray] (2.822,3.33) circle (0.5ex);
\fill[gray] (3.375,3.5) circle (0.5ex);
\fill[gray] (3.16,0.669) circle (0.5ex);
\fill[gray] (3.625,0.5) circle (0.5ex);
\end{tikzpicture}\vskip -0.5cm
\caption{Relative locations of $\alpha_0, \beta_0$, and $Q_n$  in $\overline{D_0}$.}\label{D_0}
\end{center}\vskip -0.5cm
\end{figure}
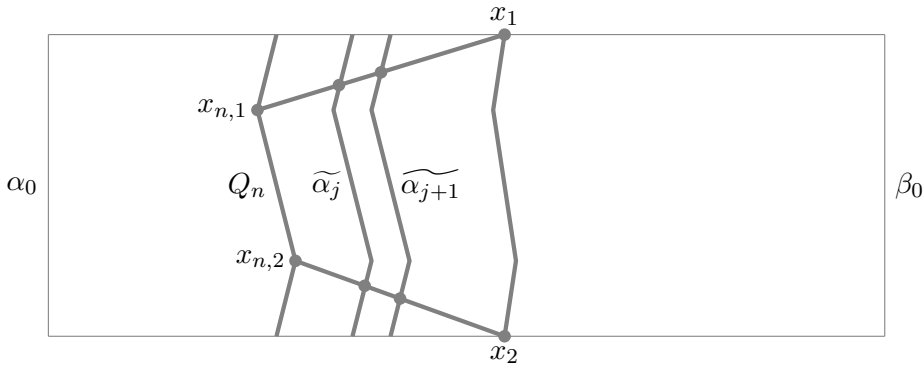

Now, fix two disks $D(x_i,r)$ centered at $x_i$ with radius $r$, for very small $r>0$ such that $D(x_i,r)\subset U_i$ for $i=1,2$. Since $Q_n\rightarrow P_\infty$ under Hausdorff distance, we may find two points $x_{n,i}$ in $(0,1)^2\cap Q_n$ for a large $n\ge1$ such that both $|x_{n,1}-x_1|$ and $|x_{n,2}-x_2|$ are smaller than $\min\left\{\frac{1}{2},r\right\}$.

Then, the segments $\gamma_{n,i}:=\overline{x_{n,i}x_i}$ for $i=1,2$ are disjoint; moreover, both $\gamma_{n,1}$ and $\gamma_{n,2}$ intersect all but finitely many of the topological disks $D_j$, that are constructed as above. In particular, each of $\gamma_{n,1}, \gamma_{n,2}$ intersects both $\alpha_j$ and $\alpha_{j+1}$ for all $j>n$. For each of those $j$, let $a_j$ be the last point of $\gamma_{n,1}$ that leaves $\alpha_j$ and $b_j$ the first point of $\gamma_{n,1}$ after $a_j$ that lies on $\alpha_{j+1}$;  let $c_j$ be the last point of $\gamma_{n,2}$ that leaves $\alpha_j$ and $d_j$ the first point of $\gamma_{n,2}$ after $c_j$ that lies on $\alpha_{j+1}$.
Then, the segments $\overline{a_jb_j}, \overline{c_jd_j}$, the arc $\widetilde{\alpha_j}\subset\alpha_j$ connecting $a_j$ to $b_j$ and the arc $\widetilde{\alpha_{j+1}}\subset\alpha_{j+1}$  connecting $c_j$ to $d_j$  form a simple closed curve, denoted $\Gamma_j$. Since the disks $D_j$ are disjoint, so are the disks $\Delta_j\subset D_j$ that are bounded by $\Gamma_j$.
For all the integers $j,j'>n$, the following  observations are immediate and will be useful in the rest of our proof.
\begin{itemize}
%\item Under Hausdorff distance, we have $\lim\limits_{j\rightarrow\infty}\overline{a_jb_j}=x_1$ and $\lim\limits_{j\rightarrow\infty}\overline{c_jd_j}=x_2$.
\item[(a)] The intersection $L\cap\left(\overline{a_jb_j}\cup \overline{c_jd_j}\right)=L\cap(\partial\Delta_j)=L\cap\Gamma_j$ is contained in $D(x_1,r)\cup D(x_2,r)$, which is a subset of $U_1\cup U_2=f^{-1}(B_1)\cup f^{-1}(B_2)$.
%By {\em Equation (\ref{key-disjoint-2})}, we even have $f\left(L\cap\overline{a_jb_j}\right)\cap f\left(L\cap\overline{c_jd_j}\right)=\emptyset$.

\item[(b)] \cwt\ implies that every $\overline{\Delta_j}\cap Q_j$ has a component $M_j$ that intersects the segments $\overline{a_jb_j}$ and $\overline{c_jd_j}$ at the same time. Since $L\cap\overline{a_jb_j}$ and $L\cap\overline{c_jd_j}$ are respectively contained in $U_1$ and $U_2$, the difference $M_j\setminus(U_1\cup U_2)$ has a component $N_j$ that intersects $\partial U_1$ and $\partial U_2$ both. Thus  $f(N_j)\cap \partial B_i\ne\emptyset$ for $i=1,2$.
\item[(c)] The compactum $L_0=L\setminus(U_1\cup U_2)=L\setminus\left(f^{-1}(B_1)\cup f^{-1}(B_2)\right)$ contains $N_j$. Moreover, no  component of  $f(L_0)=Y_0\setminus(B_1\cup B_2)$ contains $f(N_j)\cup f(N_{j'})$ for $j\ne j'$, since the pre-image of such a component $A_0$ would be a sub-continuum of $L_0$ that intersects $N_j$ and $N_{j'}$ both and hence intersects $\Gamma_j$. By the containment $(L\cap\Gamma_j)\subset (U_1\cup U_2)$ obtained in Observation (a), this contradicts the equality $L_0=L\setminus(U_1\cup U_2)$.
\end{itemize}
Now, the connectedness of $N_j$ allows us to choose a point $z_j\in N_j$ such that the distance from $f(z_j)$ to $\overline{B_1}$ equals that from $f(z_j)$ to $\overline{B_2}$.
Let $z_\infty$ be a limit point of $\{z_j\}$. Then $z_\infty\in P_\infty$ and the distance from $y^\#:=f(z_\infty)$ to $\overline{B_1}$ equals that from $y^\#$ to $\overline{B_2}$. Thus the compactum $f(L_0)$ is a neighborhood of $y^\#$ in $Y_0$. It follows from the above Observation (c) that no two of the points $f(z_j)\in N_j$ are contained in a single component of $f(L_0)$. Therefore, the component of $f(L_0)$ containing $y^\#$ is not a neighborhood of $y^\#$ in $Y_0$. This completes the proof.
\end{proof}

\section{How about compacta $K$ that may not be planar ?}

This section concerns the core decomposition with Peano quotient for a compactum that may not be planar. Here we have two aims. The first is to construct a concrete continuum $K$ in $\bbR^3$, such that the core decomposition with Peano quotient does not exist.

\begin{exam}\label{no-CD-ex}
Let $\mathcal{C}\subset[0,1]$ be the Cantor ternary set. Let $K$ be the union of three compact sets: $\mathcal{C}\times[0,1]^2$, $[0,1]\times\{0\}\times[0,1]$, and $[0,1]^2\times\{0\}$. See the following picture for a perspective depiction of $K$.
\begin{figure}[ht]
\vskip-0.25cm
\begin{center}
\begin{tikzpicture}[x=1cm,y=1cm,scale=1.25]
%%% Edit the following coordinate to change the shape of your cuboid

%% Vanishing points for perspective handling
	\coordinate (P1) at (-9cm,2cm); % left vanishing point (To pick)
	\coordinate (P2) at (5cm,2cm); % right vanishing point (To pick)

%% (A1) and (A2) defines the 2 central points of the cuboid
	\coordinate (A1) at (0cm,0cm); % central top point (To pick)
	\coordinate (A2) at (0cm,-2cm); % central bottom point (To pick)

%% (A3) to (A8) are computed given a unique parameter (or 2) .8
% You can vary .8 from 0 to 1 to change perspective on left side
	\coordinate (A3) at ($(P1)!.8!(A2)$); % To pick for perspective
	\coordinate (A4) at ($(P1)!.8!(A1)$);

	% You can vary .8 from 0 to 1 to change perspective on right side
	\coordinate (A6) at ($(P2)!.64!(A2)$);
	\coordinate (A5) at ($(P2)!.64!(A1)$);

	%% Automatically compute the last 2 points with intersections
	\coordinate (A8) at
	  (intersection cs: first line={(A5) -- (P1)},
			    second line={(A4) -- (P2)});
	\coordinate (A7) at
	  (intersection cs: first line={(A6) -- (P1)},
			    second line={(A3) -- (P2)});

	%%% Depending of what you want to display, you can comment/edit
	%%% the following lines

%% Possibly draw back faces
	\fill[green!38.2] (A2) -- (A3) -- (A7) -- (A6) -- cycle; % face 6
	\fill[green!38.2] (A3) -- (A4) -- (A8) -- (A7) -- cycle; % face 3
%%	\fill[blue!61.8] (A5) -- (A6) -- (A7) -- (A8) -- cycle; % face 4
    \draw[thick,dashed] (A3) -- (A7);

%% Possibly draw front lines
%	\draw[thick] (A1) -- (A2); 	\draw[thick] (A3) -- (A4); 	\draw[thick] (A5) -- (A6); 	\draw[thick] (A1) -- (A4); 	\draw[thick] (A1) -- (A5); 	\draw[thick] (A2) -- (A3); 	\draw[thick] (A2) -- (A6); 	\draw[thick] (A4) -- (A8); 	\draw[thick] (A8) -- (A5);

%%% Possibly draw more copies of face 1
	% (it can help you understand the IFS structure of the continuum X)
	\foreach \i in {0,1,2,3}
	{
	\coordinate (B2) at ($(P2)!0.64+.12*\i!(A2)$);
	\coordinate (B1) at ($(P2)!0.64+0.12*\i!(A1)$);

	%% Automatically compute the last 2 points with intersections
	\coordinate (B4) at
	  (intersection cs: first line={(B1) -- (P1)},
			    second line={(A4) -- (P2)});
	\coordinate (B3) at
	  (intersection cs: first line={(B2) -- (P1)},
			    second line={(A3) -- (P2)});
\fill[purple,opacity=0.25] (B1) -- (B2) -- (B3) -- (B4) -- cycle; % copies of face 1
\draw[black,thick] (B1) -- (B2) -- (B3) -- (B4) -- cycle;
	}

%% Possibly draw front faces
%    \fill[purple,opacity=0.8] (A1) -- (A2) -- (A3) -- (A4) -- cycle; % face 1

% Possibly draw points
% (it can help you understand the cuboid structure)
%\foreach \i in {1,2,...,8} 	{ 	  \draw[fill=black] (A\i) circle (0.05em); 	
% node[above right] {\tiny \i}; 	}

%\draw[fill=black] (A4) circle (0.05em)  node[left] {(1,0,1)};
\draw[fill=black] (A3) circle (0.05em)  node[left] {(1,0,0)};
%\draw[fill=black] (A2) circle (0.05em)  node[below left] {(1,1,0)};
\draw[fill=black] (A6) circle (0.05em)  node[right] {(0,1,0)};
%\draw[fill=black] (A5) circle (0.05em)  node[right] {(0,1,1)};
\draw[fill=black] (A8) circle (0.05em)  node[above] {(0,0,1)};
\draw[thick,black] (A1) -- (A5);
\draw[thick,black] (A2) -- (A6);
\draw[thick,black] (A4) -- (A8);
%\draw[thick,black] (A1) -- (A2); \draw[thick,black] (A2) -- (A3);  \draw[thick,black] (A3) -- (A4); \draw[thick,black] (A1) -- (A4);
\end{tikzpicture}
\end{center}
\vskip -0.75cm
\caption{A simplified approximation of the continuum $K$.}
\vskip -0.25cm
\end{figure}
It is routine to check that $K$ is a continuum and is semi-locally connected everywhere. Consider the projections $\pi_1(x_1,x_2,x_3)=(x_1,x_2)$ and $\pi_2(x_1,x_2,x_3)=(x_1,x_3)$ from $K$ onto $[0,1]^2$.
One may verify that the two collections of pre-images $\Dc_i=\{\pi_i^{-1}(x): x\in[0,1]^2\}$ are each a monotone decomposition of $K$, whose elements are either singletons or segments of length $1$. Moreover, the only decomposition of $K$ finer than both $\Dc_1$ and $\Dc_2$ is the decomposition into singletons. Therefore, $K$ does not allow a core decomposition with Peano quotient, since $K$ itself is continuum that is not locally connected.
\end{exam}

The second aim is to show that two topologically equivalent compacta $K,L$ either have no Peano model or have topologically equivalent ones. Namely, we have the following basic result.
\begin{theo}
\label{topological-CD}
Let $K$ be a compactum such  that the core decomposition $\Dc_K^{PC}$  exists. If $h: K\rightarrow L$ is a homeomorphism of $K$ onto $L$ then $\Dc_L^{PC}$ exists and equals $\left\{f(d): d\in\Dc_K^{PC}\right\}$.
%Consequently, the group of homeomorphisms of $K$ is isomorphic with that of the quotient space $\Dc_K^{PC}$. (This last statement is incorrect!)
\end{theo}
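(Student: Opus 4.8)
The plan is to show that a homeomorphism $h\colon K\to L$ transports monotone upper semi-continuous decompositions of $K$, together with every feature entering the definition of the core decomposition, onto those of $L$; it must therefore carry $\Dc_K^{PC}$ onto the core decomposition of $L$.

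First I would set up the correspondence $\Dc\mapsto h(\Dc):=\{h(d):d\in\Dc\}$ and check that it restricts to a bijection between the monotone upper semi-continuous decompositions of $K$ and those of $L$, with inverse $\Dc'\mapsto h^{-1}(\Dc')$. Monotonicity passes through because $h$ and $h^{-1}$ carry subcontinua to subcontinua; upper semi-continuity passes through because, for open $B\subset L$, the union of the members of $h(\Dc)$ lying in $B$ is the $h$-image of the union of the members of $\Dc$ lying in the open set $h^{-1}(B)$. Next I would observe that the natural projections $p\colon K\to\Dc$ and $q\colon L\to h(\Dc)$ satisfy $q\circ h=\widehat h\circ p$, where $\widehat h\colon\Dc\to h(\Dc)$ is the canonical bijection $d\mapsto h(d)$; since $p$ is a quotient map and $q\circ h$ is continuous, $\widehat h$ is continuous, hence a homeomorphism because both quotients are compact metric spaces (Section~1). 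Consequently $h(\Dc)$ has a Peano quotient if and only if $\Dc$ does, as being a Peano compactum is a topological property: condition~(1) of Theorem~\ref{sufficient} is plainly a homeomorphism invariant, and for condition~(2) a convergent-subsequence argument shows that a compactum homeomorphic to one violating~(2) must violate~(2) as well.

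Granting this dictionary, the theorem is formal. Put $\Dc_L:=h(\Dc_K^{PC})$; by the above it is a monotone upper semi-continuous decomposition of $L$ with Peano quotient. To see that it is the core decomposition, take any monotone upper semi-continuous decomposition $\Dc'$ of $L$ with Peano quotient, transport it to the decomposition $h^{-1}(\Dc')$ of $K$ (again with Peano quotient), and invoke the core property of $\Dc_K^{PC}$ to obtain a map $g_0$ witnessing that $\Dc_K^{PC}$ refines $h^{-1}(\Dc')$. Conjugating $g_0$ by the canonical homeomorphisms $\Dc_K^{PC}\to\Dc_L$ and $h^{-1}(\Dc')\to\Dc'$ yields a map $g\colon\Dc_L\to\Dc'$, and a short diagram chase, using that $h$ is onto, shows that $g$ composed with the projection $L\to\Dc_L$ equals the projection $L\to\Dc'$. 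Hence $\Dc_L$ refines every such $\Dc'$, so $\Dc_L$ is the core decomposition $\Dc_L^{PC}$; by the uniqueness of core decompositions (Section~1) we conclude that $\Dc_L^{PC}$ exists and equals $\{h(d):d\in\Dc_K^{PC}\}$.

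I do not expect a genuine obstacle: the argument is pure transport of structure. The one point deserving care, beyond bookkeeping, is the claim that the canonical set-bijection $\widehat h$ between the quotient spaces is a homeomorphism and that the two defining conditions of a Peano compactum are preserved by homeomorphisms; both follow from the compact-metric observations above.
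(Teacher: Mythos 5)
Your proposal is correct and follows essentially the same route as the paper: transport $\Dc_K^{PC}$ forward by $h$, pull an arbitrary monotone decomposition $\Dc'$ of $L$ with Peano quotient back to $h^{-1}(\Dc')$, and invoke the core property of $\Dc_K^{PC}$ to conclude that $h(\Dc_K^{PC})$ refines $\Dc'$. The only cosmetic difference is that the paper sidesteps your explicit check that being a Peano compactum is a topological invariant by observing directly that $\pi\circ h^{-1}\colon L\to\Dc_K^{PC}$ is a monotone map onto a Peano compactum, which identifies the quotient of $h(\Dc_K^{PC})$ with $\Dc_K^{PC}$ itself.
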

\begin{proof}%[\bf The proof for Theorem \ref{topological-CD}]
Let $\pi: K\rightarrow\Dc_K^{PC}$ be the natural projection. Let $\rho$ be a metric that is compatible with the quotient topology on $\Dc_K^{PC}$. Then $(\Dc_K^{PC},\rho)$ is a Peano compactum.
Since $h: K\rightarrow L$ is a homeomorphism, the decomposition $\Dc_L=\left\{h(d): d\in\Dc_K^{PC}\right\}$ is monotone, and the composition $\pi\circ h^{-1}: L\rightarrow\Dc_K^{PC}$ is a monotone map of $L$ onto a Peano compactum. If $\Dc'$ is an arbitrary monotone decomposition of $L$ with Peano quotient the quotient space, also denoted as $\Dc'$, is a Peano compactum under a compatible metric. Let $\pi': L\rightarrow \Dc'$ be the natural projection. Then the composition $\pi'\circ h: K\rightarrow\Dc'$ is a monotone map of $K$ onto a Peano compactum $\Dc'$. Since $\Dc_K^{PC}$ is the core decomposition of $K$ with Peano quotient, it necessarily refines the decomposition $\left\{h^{-1}(d'): d'\in\Dc'\right\}$. This indicates that $\Dc_L$ refines $\Dc'$. By the flexibility of $\Dc'$, we readily see that $\Dc_L$ is indeed the core decomposition of $L$ with Peano quotient. This ends our proof.
\end{proof}

\begin{rema}
After Theorem \ref{topological-CD}, one may wonder {\em under what conditions} the core decomposition with Peano quotient exists for a {\em non-planar} compactum $K$. Another question of some interest is whether the core decomposition with Peano quotient exists for any compactum $K$ lying on the torus, or on a general closed surface.
\end{rema}

%%% bibliography
\bibliographystyle{plain}

\begin{thebibliography}{10}

\bibitem{Beardon91}
Alan~F. Beardon.
\newblock {\em Iteration of rational functions}, volume 132 of {\em Graduate
  Texts in Mathematics}.
\newblock Springer-Verlag, New York, 1991.
\newblock Complex analytic dynamical systems.

\bibitem{BCO11}
Alexander~M. Blokh, Clinton~P. Curry, and Lex~G. Oversteegen.
\newblock Locally connected models for {J}ulia sets.
\newblock {\em Adv. Math.}, 226(2):1621--1661, 2011.

\bibitem{BCO13}
Alexander~M. Blokh, Clinton~P. Curry, and Lex~G. Oversteegen.
\newblock Finitely {S}uslinian models for planar compacta with applications to
  {J}ulia sets.
\newblock {\em Proc. Amer. Math. Soc.}, 141(4):1437--1449, 2013.

\bibitem{BO04}
Alexander~M. Blokh and Lex~G. Oversteegen.
\newblock Backward stability for polynomial maps with locally connected {J}ulia
  sets.
\newblock {\em Trans. Amer. Math. Soc.}, 356(1):119--133, 2004.

\bibitem{BOT17}
Alexander~M. Blokh, Lex~G. Oversteegen, and Vladlen Timorin.
\newblock Non-degenerate locally connected models for plane continua and
  {J}ulia sets.
\newblock {\em Discrete Contin. Dyn. Syst.}, 37(11):5781--5795, 2017.

\bibitem{Curry10}
Clinton~P. Curry.
\newblock Irreducible {J}ulia sets of rational functions.
\newblock {\em J. Difference Equ. Appl.}, 16(5-6):443--450, 2010.

\bibitem{Douady93}
Adrien Douady.
\newblock Descriptions of compact sets in {${\bf C}$}.
\newblock In {\em Topological methods in modern mathematics ({S}tony {B}rook,
  {NY}, 1991)}, pages 429--465. Publish or Perish, Houston, TX, 1993.

\bibitem{NLC-1}
Timo Jolivet, Beno\^it Loridant, and Jun Luo.
\newblock A numerical scale for non-locally connected planar continua.
\newblock {\em Topology Appl.}, 202:21--39, 2016.

\bibitem{Kelley55}
John~L. Kelley.
\newblock {\em General topology}.
\newblock Springer-Verlag, New York-Berlin, 1975.
\newblock Reprint of the 1955 edition [Van Nostrand, Toronto, Ont.], Graduate
  Texts in Mathematics, No. 27.

\bibitem{Kiwi04}
Jan Kiwi.
\newblock {$\mathbb{R}$}eal laminations and the topological dynamics of complex
  polynomials.
\newblock {\em Adv. Math.}, 184(2):207--267, 2004.

\bibitem{Kuratowski68}
K.~Kuratowski.
\newblock {\em Topology. {V}ol. {II}}.
\newblock New edition, revised and augmented. Translated from the French by A.
  Kirkor. Academic Press, New York, 1968.

\bibitem{Luo07}
Jun Luo.
\newblock Boundary local connectivity of tiles in {${\Bbb R}^2$}.
\newblock {\em Topology Appl.}, 154(3):614--618, 2007.

\bibitem{LAT04}
Jun Luo, Shigeki Akiyama, and J\"org~M. Thuswaldner.
\newblock On the boundary connectedness of connected tiles.
\newblock {\em Math. Proc. Cambridge Philos. Soc.}, 137(2):397--410, 2004.

\bibitem{Nadler92}
Sam~B. Nadler, Jr.
\newblock {\em Continuum theory}, volume 158 of {\em Monographs and Textbooks
  in Pure and Applied Mathematics}.
\newblock Marcel Dekker, Inc., New York, 1992.
\newblock An introduction.

\bibitem{Schleicher99a}
Dierk Schleicher.
\newblock On fibers and local connectivity of compact sets in {$\mathbb C$}.
\newblock 1999.
\newblock Preprint, \texttt{arXiv:math/9902154v1[math.DS]}.

\bibitem{Schleicher99b}
Dierk Schleicher.
\newblock On fibers and renormalization of julia sets and multibrot sets.
\newblock 1999.
\newblock Preprint, \texttt{arXiv:math/9902156v1[math.DS]}.

\bibitem{Schleicher04}
Dierk Schleicher.
\newblock On fibers and local connectivity of {M}andelbrot and {M}ultibrot
  sets.
\newblock In {\em Fractal geometry and applications: a jubilee of {B}eno\^\i t
  {M}andelbrot. {P}art 1}, volume~72 of {\em Proc. Sympos. Pure Math.}, pages
  477--517. Amer. Math. Soc., Providence, RI, 2004.

\bibitem{Thurston09}
William~P. Thurston.
\newblock On the geometry and dynamics of iterated rational maps.
\newblock In {\em Complex dynamics}, pages 3--137. A K Peters, Wellesley, MA,
  2009.
\newblock Edited by Dierk Schleicher and Nikita Selinger and with an appendix
  by Schleicher.

\bibitem{Whyburn79}
Gordon Whyburn and Edwin Duda.
\newblock {\em Dynamic topology}.
\newblock Springer-Verlag, New York, 1979.
\newblock Undergraduate Texts in Mathematics, With a foreword by John L.
  Kelley.

\bibitem{Whyburn42}
Gordon~Thomas Whyburn.
\newblock {\em Analytic {T}opology}.
\newblock American Mathematical Society Colloquium Publications, v. 28.
  American Mathematical Society, New York, 1942.

\bibitem{Yang17}
Yi~Yang.
\newblock {\em The Core Decomposition of Planar Compactum}.
\newblock PhD thesis, Sun Yat-sen University, 2017.
\newblock unpublished thesis.

\end{thebibliography}

\end{document}